\documentclass[11pt]{amsart}

\usepackage{setspace} 
\usepackage{amssymb}
\usepackage[dvips]{color}
\usepackage{amsmath}
\usepackage{amsthm}
\usepackage[a4paper]{geometry}
\usepackage{amsfonts}
\usepackage[all]{xypic} 
\usepackage{bbm}
\usepackage{xcolor}
\usepackage[normalem]{ulem}

\usepackage{color}
\usepackage[colorlinks=true,linkcolor=blue,citecolor=blue,pdfpagelabels=false]{hyperref}
\usepackage{cleveref}
\usepackage{url}

\usepackage{hyperref}

\usepackage[colorinlistoftodos]{todonotes}

\allowdisplaybreaks[4] 
\newtheoremstyle{myremark}     {10pt}{10pt}{}{}{\bfseries}{.}{.5em}{}

\newtheorem{thm}{Theorem}[section]
\newtheorem{cor}[thm]{Corollary}
\newtheorem{lem}[thm]{Lemma}
\newtheorem{prop}[thm]{Proposition}
\theoremstyle{definition}
\newtheorem{defn}[thm]{Definition}

\theoremstyle{myremark}
\newtheorem{rem}[thm]{Remark}
\numberwithin{equation}{section}

\def\A{\mathbb A}
\def\B{\mathbb B}
\def\C{\mathbb C}

\def\N{\mathbb N}

\def\R{\mathbb R}

\def\S{\mathbb S}

\newcommand{\CA}{\mathcal{A}}
\newcommand{\CB}{\mathcal{B}}
\newcommand{\CC}{\mathcal{C}}

\newcommand{\CE}{\mathcal{E}}
\newcommand{\CF}{\mathcal{F}}

\newcommand{\CH}{\mathcal{H}}

\newcommand{\CK}{\mathcal{K}}

\newcommand{\CM}{\mathcal{M}}

\newcommand{\CP}{\mathcal{P}}

\newcommand{\CS}{\mathcal{S}}

\newcommand{\CU}{\mathcal{U}}

\newcommand{\abs}[1]{\left\vert#1\right\vert}

\newcommand{\norm}[1]{\left\Vert#1\right\Vert}

\newcommand{\Aut}{\operatorname{Aut}}

\begin{document}
	
	\title[Noncommutative Wiener-Wintner Theorems]{ Non-Commutative Wiener-Wintner theorem for amenable group actions}
	
	\author[P. Bikram]{Panchugopal Bikram$^{1, 2}$}
\address{$^{1}$School of Mathematical Sciences,
National Institute of Science Education and Research,  Bhubaneswar, Odisha, India  \newline  $^{2}$Homi Bhabha National Institute, Training School Complex, Anushakti Nagar, Mumbai 400094, India}
	
\author[S.Kundu]{Sudipta Kundu$^{1, 2}$}
	
\author[Hariharan G.]{Hariharan G$^{1, 2}$}
	
	\email{bikram@niser.ac.in}  \email{sudipta.kundu@niser.ac.in}
    	\email{hariharan.g@niser.ac.in} 
    \maketitle

\begin{abstract}
Let \(G\) be a locally compact, second countable, amenable group acting on a finite von Neumann algebra 
\((\mathcal{M},\tau)\) by trace-preserving automorphisms. In this article, we establish a Jacobs-de Leeuw-Glicksberg decomposition for this action, yielding a decomposition of \(\mathcal{M}\) into its almost periodic and weakly mixing components. We also prove a noncommutative version of the van der Corput lemma. As an application, we establish a noncommutative Wiener-Wintner theorem for amenable group actions on finite von Neumann algebras.

\end{abstract}

\section{\textbf{Introduction}}

In this article, we study a Wiener-Wintner theorem using a Jacobs-de Leeuw-Glicksberg   (JdLG) decomposition of a finite von Neumann algebra $\CM$ acted upon continuously, and ergodically, by a locally compact second countable (l.c.s.c.) amenable group $G$. Wiener-Wintner theorems are a class of pointwise convergence results for weighted ergodic averages on a full measure set independent of the weight. The study of Wiener-Wintner theorems was first started by Wiener and Wintner in their 1941 paper \cite{classicalZWW} for the case of a single measure preserving transformation $T$ acting on a measure space $X$,  the theorem states that given an ergodic measure preserving system $(\Omega,\mathcal{B},\mu,T)$ and a complex number (referred to as weight) $\lambda$ of modulus 1, for any $f\in L^1(\Omega,\mu)$ there is a full measure set $\Omega_f$ independent of $\lambda$ such that the average of the form (hereafter referred to as a Wiener-Wintner average)
\begin{align*}a_n(f,\lambda):=\frac{1}{n}\displaystyle\sum_{k=0}^{n-1}\lambda^kf(T^kx)
\end{align*}
converges for all $x\in \Omega_f$. Further advances were made by Bellow and Losert in their paper \cite{BellowLosert}, where pointwise a.e. convergence was proved for different weight classes and for subsequential counterparts of the Wiener-Wintner averages in the measure space setting.

\noindent A noncommutative counterpart of this theorem in the setting of a von Neumann algebra $\CM$, equipped with an ergodic automorphism $\alpha \in Aut(\CM)$ was studied by Litvinov in his 2014 work, \cite{Lit2014}, where the uniform convergence in the bilaterally almost uniform (b.a.u.) sense, of Wiener-Wintner averages of the form
\begin{align*}
    \frac{1}{N}\sum_{i=0}^{N-1}\lambda^{k}\alpha^{k}(x), \lambda \in \C^{1}
\end{align*}
is proved. Similar results were studied for different weight classes in the multiparameter case by Hong \cite{Hong-Sun-2018} and later, by \cite{OBrien2021NoncommutativeWT}.\\
Parallel to these developments, a Wiener-Wintner theorem for the case of a continuous and ergodic action of a l.c.s.c. amenable group $G$, preserving a probability measure $\mu$ on the compact metric space $X$, was also studied by Pavel Zorin-Kranich in his 2014 paper \cite{Pavel}.\\ 
In \cite{Bahring2019},  M. Bahring  proved, using a finitary version of the van der Corput lemma, the uniform convergence of the weighted average 
\begin{align*}
    \A_n(f)(x):=\frac{1}{m(F_n)}\int_{F_n}\xi(g)f(gx)dm(g)
\end{align*}
on a full $\mu$-measure subset of $X$
for continuous and ergodic action of a locally compact abelian (and thereby amenable) group $G$ on a compact space $X$, where $f\in L^2(X,\mu)$ and $\mu$ is probability measure on $X$ preserved by the action. A proof using the joining of two actions of $G$ on the measure space was given by \cite{ElAbdalaoui2021}. \\

One of the standard techniques for proving Wiener–Wintner theorems is to employ an appropriate version of JdLG decomposition.
 It states that given a unitary representation $\pi$ of a locally compact group $G$ on a Hilbert space $\CH$ equipped with an inner product $\langle \cdot,\cdot\rangle$, there exists a decomposition $\CH=\CH_{c}\oplus \CH_{wm}$, where $\CH_{c}=\{\xi \in \CH:\{U_{g}{\xi}:g \in G\} \text{ is totally bounded in } \CH\}$, is called the compact part and $\CH_{wm}=\CH^{\perp}_{c}$, is called the weak mixing part. If $G$ is amenable and equipped with a Haar measure $m$, we may further describe $\CH_{wm}$ as 
\begin{align*}
\CH_{wm}=\{\xi \in \CH:\lim_{n \to \infty}\frac{1}{m(F_{n})}\int_{F_{n}}|\langle \pi(g)\xi,\eta\rangle|dm(g)=0, ~\text{for all}~ \eta \in \CH\}.
\end{align*} 
In the present article, our set up for the non-commutative Wiener-Wintner theorem is the following: $G$ is a l.c.s.c. unimodular amenable group with a F{\o}lner sequence $(F_{n})_{n \in \mathbb{N}}$, acting by $*$-automorphisms  continuously and ergodically (meaning, $\alpha_{g}(x)=x, $  for all $ g \in G \text{ implies }x=k 1$ for some $k \in \mathbb{C}$) on a finite von Neuman algebra $\CM$ while preserving a faithful normal tracial state $\tau$ on $\CM$. The action of $G$ on $\CM$, given by $\alpha:G \rightarrow \Aut(\CM)$, may be extended unitarily to the non-commutative $L^2$-space associated to $\CM$ to give rise to a unitary group representation on the GNS Hilbert space $L^{2}(\CM,\tau)$ associated to $\CM$ with respect to $ \tau$. In this case, the corresponding Wiener-Wintner average is of the form
\begin{align*}
    \mathbb{A}_n(x):= \frac{1}{m(F_n)}\int_{F_n}\alpha_g(x)\phi(\pi(g) u)dm(g),
\end{align*}
where $(F_n)$ is an appropriate F\o lner sequence, $\pi:G \to \mathcal{U}_d$ is a finite dimensional unitary representation and $\phi$ is a complex valued continuous function on $\mathcal{U}_d$, the unitary group of operators on $\C^{d}$, and $u \in \CU_{d}$ is a unitary.\\
The aim of this article is to establish the convergence of the abovementioned noncommutative Wiener-Wintner averages in the b.a.u. sense:
\begin{thm}\label{main}
    Let G be a l.c.s.c. amenable unimodular group acting continuously on a finite von Neumann algebra $\CM$ by $*$-automorphisms preserving the trace.  Let $x\in L^p(\CM,\tau)$ and $\pi: G \to \mathcal{U}_d$ be a finite dimensional unitary representation of $G$ on $\mathbb{C}^d$ where $\mathcal{U}_d$ is the group of unitary operators on $\mathbb{C}^d$. Then for any continuous function $\phi$ on $\mathcal{U}_d$ the following weighted average 
    \begin{align}\label{Avg_2}
        \mathbb{A}_n(x):= \frac{1}{m(F_n)}\int_{F_n}\alpha_g(x)\phi(\pi(g) u)dm(g)
    \end{align}
    converges in b.a.u where $u \in \mathcal{U}_d$. Moreover if $x$ is in the weak-mixing part of the representation $\alpha$ then the average converges to $0$ in b.a.u. 
\end{thm}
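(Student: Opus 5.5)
The plan is to use the JdLG decomposition $L^2(\CM,\tau)=\CH_c\oplus\CH_{wm}$ for the Koopman representation induced by $\alpha$, and to handle the two summands separately. First I will reduce to $x\in\CM$. The weights are uniformly bounded, $|\phi(\pi(g)u)|\le\|\phi\|_\infty$, so the maximal inequality for the unweighted ergodic averages (Yeadon / Junge--Xu for amenable actions along a suitable tempered F\o lner sequence) passes to the weighted averages on positive parts. Together with density of $\CM$ in $L^p$ and the standard closedness of the set of b.a.u.-convergence under such maximal inequalities, this lets me work with bounded $x$. For such $x$ I split $x=x_c+x_{wm}$. I will then argue, as in the JdLG section, that the compact part is spanned in a suitable sense by finite-dimensional invariant subspaces of $\CM$; those are eigen-operator families $y$ with $\alpha_g(y)=\sum_j \rho(g)_{ij}y_j$ for a finite-dimensional unitary representation $\rho$.

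\textbf{Compact part.} For $y$ in a finite-dimensional $\alpha$-invariant subspace,
\[
\mathbb{A}_n(y_i)=\sum_j\Big(\frac{1}{m(F_n)}\int_{F_n}\rho(g)_{ij}\,\phi(\pi(g)u)\,dm(g)\Big)y_j ,
\]
so the claim reduces to convergence of scalar averages of the continuous function $g\mapsto \rho(g)_{ij}\phi(\pi(g)u)$. This function factors through the compact group $K=\overline{(\rho\oplus\pi)(G)}$. By unique ergodicity of translation on $K$ along F\o lner sequences (Peter--Weyl plus amenability), these averages converge to $\int_K$ and hence converge in norm, so certainly b.a.u. An approximation in $L^2$ then extends this to all of $\CH_c\cap\CM$, again using the maximal inequality.

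\textbf{Weak mixing part.} I want $\mathbb{A}_n(x)\to0$ b.a.u. for $x\in\CH_{wm}$. First I approximate $\phi$ uniformly on $\CU_d$ by linear combinations of matrix coefficients of irreducible representations of $\CU_d$ (Stone--Weierstrass / Peter--Weyl). This reduces the problem to weights $\sigma(\pi(g)u)_{kl}$, that is, to averages of $\alpha_g(x)$ tensored with a finite-dimensional unitary representation $\sigma\circ\pi$ of $G$. Here I will invoke the noncommutative van der Corput lemma. It bounds $\limsup\|\mathbb{A}_n(x)\|$, in the b.a.u. sense via a projection, by averages over $h$ of the expressions $\frac1{m(F_n)}\int_{F_n}\alpha_g(x^*\alpha_h(x))\,w(g,h)\,dm(g)$. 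The unitary weights collapse under the product, and the inner averages converge b.a.u. by the ordinary noncommutative ergodic theorem to $\tau(x^*\alpha_h(x))$ times bounded factors, using ergodicity. Averaging over $h$ then gives a bound by $\frac1{m(F_H)}\int_{F_H}|\langle\alpha_h x,x\rangle|\,dm(h)$, which tends to $0$ by the F\o lner characterization of $\CH_{wm}$.

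The main obstacle is this last step. It requires making the van der Corput estimate genuinely b.a.u.: a single projection $e$ with $\tau(1-e)<\varepsilon$ must work simultaneously for the countably many $h$ in a dense set and for all the reductions. On the other side, the limits have to be identified through ergodicity. I would first try to obtain a uniform bound of the form $\|e\,\mathbb{A}_n(x)^*\mathbb{A}_n(x)\,e\|$, using a countable dense set of $h$ and the weak-type maximal inequality to intersect the projections.
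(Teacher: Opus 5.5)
Much of your plan matches the paper's architecture:
\begin{itemize}
\item the reduction to bounded $x$ through the maximal inequality and the Banach principle (Theorems \ref{maximal_inequality} and \ref{Banach_Principle});
\item the splitting $\CM=\CM_c\oplus\CM_{wm}$;
\item the compact part handled on finite-dimensional invariant subspaces. Your unique-ergodicity argument on $\overline{(\rho\oplus\pi)(G)}$ is a fine substitute for the paper's mean ergodic theorem on $L^2\otimes\C^d$, and your Peter--Weyl reduction of $\phi$ to coefficients of $\sigma\circ\pi$ is the right one;
\item the weak-mixing part via van der Corput plus the a.u.\ ergodic theorem for $x^*\alpha_{g_i}(x)$ along a countable dense set, with $e=\bigwedge_i e_i$.
\end{itemize}
That projection bookkeeping, which you single out as the main obstacle, is handled in this way in the paper and is not where the real difficulty lies.

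The gap is your claim that averaging over $h$ gives a bound by $\frac{1}{m(F_H)}\int_{F_H}|\langle\alpha_hx,x\rangle|\,dm(h)$. The van der Corput step replaces $f(g)$ by $\frac{1}{m(F_H)}\int_{F_H}f(gh)\,dm(h)$ and then squares. What you actually obtain (Lemma \ref{Lemma_vand}) is therefore the \emph{double} average
\[
\frac{1}{m(F_H)^2}\int_{F_H}\int_{F_H}\bigl|\tau\bigl(x^*\alpha_{h_1^{-1}h_2}(x)\bigr)\bigr|\,dm(h_1)\,dm(h_2)=\int_G|\langle\alpha_hx,x\rangle|\,d(\widetilde{\mu_H}*\mu_H)(h),
\]
where $d\mu_H=\chi_{F_H}\,dm/m(F_H)$. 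The density of $\widetilde{\mu_H}*\mu_H$ is $m(F_H\cap F_Hh^{-1})/m(F_H)^2\le\chi_{F_H^{-1}F_H}(h)/m(F_H)$. Dominating this by a F\o lner average needs $m(F_H^{-1}F_H)\le C\,m(F_H)$ with $(F_H^{-1}F_H)$ F\o lner. That is a Tempelman-type hypothesis, which a general amenable group need not admit; it is exactly the obstruction discussed in Remark \ref{class-comp}.

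Nor does the F\o lner characterization of $\CH_{wm}$ help directly. Since $\tau(x^*\alpha_{h_1^{-1}h_2}(x))=\langle\alpha_{h_2}x,\alpha_{h_1}x\rangle$, the test vector $\alpha_{h_1}x$ moves. The orbit of a nonzero weakly mixing vector is not precompact, so convergence for each fixed $\eta$ gives no uniformity.

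The missing idea is the paper's Lemma \ref{lemma_Reiter}. Using unimodularity and a two-sided F\o lner sequence, $\nu_H=\widetilde{\mu_H}*\mu_H$ is a two-sided Reiter sequence. Theorem \ref{thm:weak-mixing-Reiter} then shows $\int|\langle\pi(g)\xi,\eta\rangle|\,d\nu_H\to0$ for $\xi\in\CH_{wm}$ along \emph{any} Reiter sequence. That theorem rests on two facts:
\begin{itemize}
\item the mean ergodic theorem for $\pi\otimes\overline{\pi}$;
\item $\xi\otimes\overline{\xi}\perp(\CH\otimes\overline{\CH})^G$. This is proved by identifying invariant vectors with $G$-commuting Hilbert--Schmidt operators $T$, for which $\ker(T^*)^\perp\subseteq\CH_c$.
\end{itemize}

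You could also patch your step directly with the same fact. By Cauchy--Schwarz,
\[
\sup_{\|\eta\|_2\le\|x\|_2}\frac{1}{m(F_H)}\int_{F_H}|\langle\alpha_hx,\eta\rangle|\,dm(h)\le\|x\|_2\,\|T_H(x\otimes\overline{x})\|^{1/2},
\]
where $T_H$ is the F\o lner average of $\alpha\otimes\overline{\alpha}$. The right-hand side tends to $0$ by the mean ergodic theorem, and this bounds the double average above. Either way, what your proposal lacks is the orthogonality of $x\otimes\overline{x}$ to the invariant vectors of $\alpha\otimes\overline{\alpha}$.
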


In order to prove this result, we use the JdLG decomposition of the GNS space $L^{2}(\CM,\tau)$. We  establish the following  version of the JdLG decomposition for the finite von Neumann algebra.
\begin{thm}\label{JdLG}
Let $ \CM$ be a finite von Neumann algebra with a f.n trace $\tau$ and $ (\CM, G, \alpha) $ be a $\tau$-preserving noncommutative dynamical system.  Consider the following 
\begin{align*}
    \CM_{c} &= \{a \in \CM:a \widehat{1}\in L^{2}_{c}(\CM, \tau)\}, \text{ is a von Neumann subalgebra, and,}\\
\CM_{wm}&=\biggl\{a \in \CM:\lim_{n \rightarrow \infty}\frac{1}{m(F_{n})}\int_{F_{n}}\abs{\tau(b\alpha_{g}(a))}dm(g)=0, ~\text{ for all  } b\in \CM \biggr\}.
\end{align*}
Then $\CM$ can be written as 
$\CM=\CM_{c}\oplus \CM_{wm}$ such that 
\[ \overline{ \CM \cap L^{2}_{c}(\CM,\tau)} = L_{c}^{2}(\CM,\tau)    ~\text{ and }~  \overline{ \CM \cap L^{2}_{\text{wm}}(\CM,\tau) } ~=  L^{2}_{\text{wm}}(\CM,\tau) .\]
\end{thm}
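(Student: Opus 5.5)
The plan is to work on $L^2(\CM,\tau)$ with the unitary representation $U_g(a\widehat{1})=\alpha_g(a)\widehat{1}$, and to get the decomposition of $\CM$ by showing that the orthogonal projection $P_c$ onto $L^2_c(\CM,\tau)$ maps $\CM\widehat 1$ into $\CM\widehat 1$ and is the trace preserving conditional expectation onto $\CM_c$. First, using the Hilbert space JdLG decomposition quoted in the introduction, $L^2_c$ is the closed span of the finite dimensional $U$-invariant subspaces; equivalently, it is the range of the central projection of the Bohr compactification. Concretely, I would use the compact semitopological semigroup $S=\overline{\{U_g\}}^{\,wot}$. It has a minimal ideal $K$ with identity $P$, and $K=PS$ is a compact group of unitaries on $P\CH$. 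Then $P=P_c$ and $\ker P=L^2_{wm}$. The key structural point is that every element of $S$ is a WOT limit of the $U_g$. Each $U_g$ comes from the $*$-automorphism $\alpha_g$, so it commutes with $J$ and preserves the positive cone and the unit vector $\widehat 1$. These properties pass to the limits. Hence each $T\in S$ induces a positive, unital, trace preserving, normal map on $\CM$: $T(a\widehat 1)=\beta_T(a)\widehat 1$, where $\beta_T(a)$ lies in $\CM$ with $\|\beta_T(a)\|\le\|a\|$. To see that $\beta_T(a)\in\CM$, note that $a\widehat 1$ with $0\le a\le \|a\|$ is mapped into the order interval $[0,\|a\|]\widehat 1$, which is WOT closed. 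In particular $E:=\beta_P$ is a normal unital idempotent contraction of $\CM$ onto $\CM_c:=\{a: a\widehat1\in P\CH\}$, and it is $\tau$-preserving. By Tomiyama, $E$ is a conditional expectation as soon as its range is a $*$-subalgebra.

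The main obstacle is to show that $\CM_c$ is a von Neumann subalgebra; this is the only place where the multiplicative structure enters. For $*$-closedness I use that $J$ commutes with $U_g$, so $J$ preserves totally bounded orbits. For products, take $a,b\in\CM_c$. For $a$, the orbit $\{\alpha_g(a)\widehat1\}$ is totally bounded in $L^2$, and $\{\alpha_g(a)\}$ is bounded in operator norm. A product of orbits then satisfies
\[
\|\alpha_g(ab)\widehat1-\alpha_h(ab)\widehat1\|_2\le\|a\|\,\|(\alpha_g(b)-\alpha_h(b))\widehat1\|_2+\|b\|\,\|(\alpha_g(a)-\alpha_h(a))\widehat1\|_2 ,
\]
where for the second term I use $\|x y\widehat1\|_2=\|J y^*J\, x\widehat1\|_2\le\|y\|\|x\widehat1\|_2$ (right multiplication) for the tracial state. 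Hence an $\varepsilon$-net for the pair orbit $\{(\alpha_g(a)\widehat 1,\alpha_g(b)\widehat 1)\}$ gives a net for $\{\alpha_g(ab)\widehat 1\}$, so $ab\in\CM_c$. Weak closedness follows because $E$ is normal and $\CM_c=E(\CM)$. Thus $\CM_c$ is a von Neumann subalgebra, and $E$ is the $\tau$-preserving conditional expectation onto it, satisfying $E(a)\widehat1=P(a\widehat1)$.

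Finally set $\CM_{wm}':=\ker E=\{a\in\CM: a\widehat1\in L^2_{wm}\}$, so that $\CM=\CM_c\oplus\ker E$ via $a=E(a)+(a-E(a))$. It remains to identify $\ker E$ with $\CM_{wm}$ as defined in the statement. If $a\widehat1\in L^2_{wm}$, the Hilbert space characterization gives $\frac1{m(F_n)}\int_{F_n}|\langle U_g a\widehat1,\eta\rangle|\,dm\to0$ for every $\eta$, and taking $\eta=b^*\widehat1$ gives exactly $\tau(b\alpha_g(a))$. Conversely, if $a\in\CM_{wm}$, then the averaged condition holds on the dense set $\CM\widehat1$. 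Since $\|U_g a\widehat1\|$ is bounded, an $\varepsilon/3$ argument extends it to all $\eta\in L^2$, so $a\widehat1\in L^2_{wm}$. For the density statements: $P(\CM\widehat1)=E(\CM)\widehat1=\CM_c\widehat1\subset \CM\cap L^2_c$, and $P$ is continuous with dense domain image, so $\CM_c\widehat 1$ is dense in $L^2_c$. Similarly, $(I-P)(\CM\widehat1)=\ker E\,\widehat1$ is dense in $L^2_{wm}$.
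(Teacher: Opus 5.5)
Your proof is correct, but it reaches the key step by a different route than the paper.

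\emph{The paper's route.} The paper first proves that $L^2_c(\CM,\tau)$ is stable under moduli. It does this via the Powers--St\o rmer inequality and the identity $\alpha_g(|b|)=|\alpha_g(b)|$ on $L^2$. A functional-calculus argument then shows that spectral projections of self-adjoint elements of $L^2_c$ lie in $\CM_c$, which gives density of $\CM\cap L^2_c$ in $L^2_c$. Only after that does it invoke the abstract $\tau$-preserving conditional expectation $\CE_c$ onto the von Neumann subalgebra $\CM_c$ and check that $(a-\CE_c(a))\widehat1\perp\CM_c\widehat1$.

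\emph{Your route.} You realize $P_c$ as the minimal idempotent of $\overline{\{U_g\}}^{\mathrm{wot}}$, so $P_c$ is a WOT-limit of the Koopman unitaries. It therefore fixes $\widehat1$ and preserves the order intervals $\{b\widehat1: 0\le b\le c\}$. These are weakly compact, being images of $\sigma$-weakly compact sets under the weak-continuous map $b\mapsto b\widehat1$. Hence $P_c$ itself restricts to a normal, unital, $\tau$-preserving idempotent $E$ of $\CM$ onto $\CM_c$. Density of $\CM_c\widehat1$ in $L^2_c$ is then immediate from $P_c(\CM\widehat1)=\CM_c\widehat1$.

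\emph{What each route buys.} Your argument bypasses the Powers--St\o rmer and spectral machinery entirely. It also shows that the splitting $\CM=\CM_c\oplus\ker E$ does not need the algebra structure of $\CM_c$; you use that structure only, via Tomiyama, to identify $E$ as a conditional expectation. The price is reliance on the semigroup form of JdLG: the projection onto the compact part lies in the WOT-closure of $\{U_g\}$. For unitary representations this amounts to the minimal ideal of the weakly almost periodic compactification being a group. The paper, by contrast, uses only the Hilbert-space splitting $L^2=L^2_c\oplus L^2_{wm}$ plus von Neumann algebra tools, and gets the lattice and spectral stability of $L^2_c$ as a by-product.

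Your explicit approximation argument showing $\CM_{wm}\subseteq\{a\in\CM: a\widehat1\in L^2_{wm}\}$ also fills in a step the paper leaves implicit. That inclusion is exactly what yields $\CM_c\cap\CM_{wm}=\{0\}$.
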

Another result of significance in ergodic theory, classical and noncommutative, is the van der Corput inequality. 
It is often stated in literature in various forms, but of particular importance to us, is a van der Corput inequality for the setting of von Neumann algebras. A noncommutative version of this inequality was first proved by \cite{NSZJOT2003} for bounded functions $a:\mathbb{Z} \rightarrow \CA$, where $\CA$ is a C*-algebra, and was used in \cite{Lit2014}. A finitary version of this lemma was proved for bounded functions $f:G \rightarrow \C$, for a abelian group $G$, by \cite{Bahring2019}. In section 5 of this article, we prove the following van der Corput inequality:
\begin{lem}[A van der Corput inequality for amenable groups]\label{Lemma_vand}
Let $G$ be an l.c.s.c amenable group equipped with a right Haar measure $m$, and let $(F_n)$ be a right F\o lner sequence in $G$ and $\CM$ be a  von Neumann algebra. Let $f : G \to \CM$ be a bounded measurable function. Then for every $n, k \in \mathbb{N}$, the following estimate holds:
\begin{align*}\label{Vand_inequality}
\norm{
\frac{1}{m(F_n)} \int_{F_n} f(g)\, dm(g)
}^2 &\leq 
\frac{1}{m(F_k)^2} \int_{F_k} \int_{F_k}
\norm{ \left( \frac{1}{m(F_n)} \int_{F_n}
f( g)^* ~ f( gh_{1}^{-1}h_2) dm(g) \right) }
dm(h_1)\, dm(h_2) \\
&~~~+ 3\|f\|_\infty^2 \sup_{h \in F_k}
\frac{m(F_nh \triangle F_n)}{m(F_n)}  +   \|f\|^2_\infty \left( \sup_{h \in F_k}
\frac{m(F_nh \triangle F_n)}{m(F_n)} \right)^2.
\end{align*}
\end{lem}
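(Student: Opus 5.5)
We follow the classical van der Corput argument: average over shifts, estimate the error from shifting, then use the C$^*$-identity $\norm{T}^2=\norm{T^*T}$ and a Cauchy--Schwarz-type inequality for operator-valued integrals. Write $A_n=\frac{1}{m(F_n)}\int_{F_n} f(g)\,dm(g)$ and, for $h\in G$, $A_n^h=\frac{1}{m(F_n)}\int_{F_n} f(gh)\,dm(g)$. By right invariance of $m$, $A_n^h=\frac{1}{m(F_n)}\int_{F_nh} f(g)\,dm(g)$. Hence
\[
\norm{A_n-A_n^h}\le \norm{f}_\infty \frac{m(F_nh\triangle F_n)}{m(F_n)}\le \norm{f}_\infty\,\delta,
\qquad \delta:=\sup_{h\in F_k}\frac{m(F_nh\triangle F_n)}{m(F_n)},
\]
for every $h\in F_k$. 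Averaging over $h\in F_k$ gives $A_n=B+E$, where
\[
B=\frac{1}{m(F_n)}\int_{F_n}\Bigl(\frac{1}{m(F_k)}\int_{F_k} f(gh)\,dm(h)\Bigr)dm(g)
\]
(Fubini in the weak$^*$/Bochner sense, since $f$ is bounded and measurable) and $\norm{E}\le\norm{f}_\infty\delta$.

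We then expand
\[
\norm{A_n}^2=\norm{A_n^*A_n}\le \norm{B^*B}+\norm{B^*E}+\norm{E^*B}+\norm{E^*E}.
\]
Using $\norm{B}\le\norm{f}_\infty$, the cross terms give at most $2\norm{f}_\infty^2\delta$ and the last term gives $\norm{f}_\infty^2\delta^2$. The extra $\norm{f}_\infty^2\delta$ in the stated constant $3$ is slack that we will use in the final step.

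For the main term, set $\Phi(g)=\frac{1}{m(F_k)}\int_{F_k} f(gh)\,dm(h)$, so that $B=\frac{1}{m(F_n)}\int_{F_n}\Phi(g)\,dm(g)$. We use the operator Cauchy--Schwarz (Jensen) inequality for a probability average,
\[
\Bigl(\frac{1}{m(F_n)}\int_{F_n}\Phi\Bigr)^*\Bigl(\frac{1}{m(F_n)}\int_{F_n}\Phi\Bigr)\le \frac{1}{m(F_n)}\int_{F_n}\Phi(g)^*\Phi(g)\,dm(g).
\]
This follows because the average of $(\Phi(g)-B)^*(\Phi(g)-B)$ is positive. Taking norms, which are monotone on positive operators, gives
\[
\norm{B}^2\le \norm{\frac{1}{m(F_n)}\int_{F_n}\frac{1}{m(F_k)^2}\int_{F_k}\int_{F_k} f(gh_1)^*f(gh_2)\,dm(h_1)\,dm(h_2)\,dm(g)}.
\]
Next we move the norm inside the $h_1,h_2$ integrals and substitute $g'=gh_1$, using right invariance. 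The inner integral becomes $\frac{1}{m(F_n)}\int_{F_nh_1} f(g')^*f(g'h_1^{-1}h_2)\,dm(g')$. Replacing $F_nh_1$ by $F_n$ costs at most $\norm{f}_\infty^2\delta$, since $h_1\in F_k$. This produces the stated main term plus one more $\norm{f}_\infty^2\delta$, which accounts for the constant $3$.

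The main obstacle is making the operator-valued integration rigorous. We must justify Fubini and positivity for $\CM$-valued bounded measurable functions, which are only weak$^*$-measurable and may not be Bochner integrable. To handle this, we will interpret all integrals in the weak$^*$ sense. We then verify the Jensen inequality and the norm bounds by pairing with states/vectors: for a positive operator $T$, $\norm{T}=\sup\langle T\xi,\xi\rangle$, and scalar Fubini applies. Norm subadditivity for weak$^*$ integrals follows from $\norm{\int T}\le\int\norm{T}$ by duality, provided $h\mapsto\norm{T(h)}$ is measurable. If it is not, we will read the right-hand side as an upper integral.
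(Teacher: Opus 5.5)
Your proposal is correct and follows essentially the same route as the paper's proof. Like the paper, you compare the average with its $F_k$-averaged right translates (error $\|f\|_\infty\delta$), square to pick up $2\|f\|_\infty^2\delta+\|f\|_\infty^2\delta^2$, apply the operator Jensen inequality, pull the norm inside the $h_1,h_2$ integrals, and substitute $g\mapsto gh_1$, paying one more $\|f\|_\infty^2\delta$ to replace $F_nh_1$ by $F_n$. Your expansion via $\|A_n^*A_n\|$ and your self-contained proof of Jensen are only cosmetic variations, and you are slightly more careful than the paper, which writes the norm-inside step as an equality rather than an inequality and does not address weak$^*$-measurability.
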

The main novelty lies in our method. To the best of our knowledge, the JdLG decomposition in the form required here has not previously been available for amenable group actions on finite von Neumann algebras. We develop such a decomposition in the appropriate noncommutative setting. Then we prove a version of the noncommutative van der Corput lemma and a characterization of weak mixing along Reiter sequences, and use them to establish Theorem \ref{main}. The use of weak mixing along a Reiter sequence is, to the best of our knowledge, new even in the classical setting in this context of the Wiener-Wintner theorem. This approach overcomes the main obstruction in the noncommutative setting and yields the Wiener-Wintner theorem in full generality. 

Moreover, we point out that our results strengthen the main result of M.~B\"ahring~\cite{Bahring2019} on the uniform Wiener-Wintner theorem for unimodular locally compact amenable groups. We also provide an affirmative answer to a question raised by e. H. el Abdalaoui in \cite[Question 2.12]{ElAbdalaoui2021}.

\subsection{Outline of the paper}We first introduce some preliminaries on noncommutative $L^{p}$ spaces and dynamical systems in section \ref{preli}, followed by a discussion of some results of amenable groups and a proof of the JdLG decomposition for finite von Neumann algebras in section \ref{amena}  and section \ref{JdLG} respectively. In the last section, we first use the result of Cadhilac and Wang \cite{cadilhac2022noncommutative} to prove the a.u. and $\|.\|_{2}$ convergence of the unmodified average, which is followed by a Banach principle for the Wiener-Wintner averages. Then we provide a proof of the van der Corput lemma stated above and use these results to prove the convergence of the Wiener-Wintner averages in the b.a.u. sense.

We conclude the last section with a remark (see Remark \ref{class-comp}) comparing our results with the corresponding classical results and the techniques employed in the proofs and point out that our results improve some classical theorems regarding Wiener-Wintner Theorems.

\section{\textbf{Preliminaries}}\label{preli}
\noindent Throughout this article, let \(\mathcal{M}\) denotes a von Neumann algebra acting on a separable Hilbert space \(\mathcal{H}\),  i.e. $\mathcal{M}\subseteq \mathcal{B}(\mathcal{H})$. The operator norm inherited from \(\mathcal{B}(\mathcal{H})\) is written as \(\lVert \cdot \rVert_\infty\); when no confusion can arise, we abbreviate it simply by \(\lVert \cdot \rVert\). The commutant of \(\mathcal{M}\) is denoted by \(\mathcal{M}'\). We further write \(\mathcal{P}(\mathcal{M})\) for the collection of projections contained in \(\mathcal{M}\).

Let $\CM$ be a von Neumann algebra, and let $\CM_*$ denote its predual. Elements of $\CM_*$ are called \emph{normal linear functionals} on $\CM$. A functional $\phi \in \CM_*$ is said to be \emph{positive} if
\[
\phi(x)\geq 0, ~~\text{ for all }  x\in \CM_+,
\]
where $\CM_+$ denotes the cone of positive elements of $\CM$.
A positive functional $\phi \in \CM_*$ is called a \emph{state} if $ \phi(1)=1.$
It is called \emph{faithful} if for every $x\in \CM_+$,
\[
\phi(x)=0 \quad \Longleftrightarrow \quad x=0.
\]
For notational convenience, the term \emph{faithful normal} will be abbreviated by \emph{f.n.} Given an f.n.\ state $\phi$ on $\CM$, we denote by $L^2(\CM,\phi)$ the Hilbert space arising from the GNS construction associated with $\phi$. For each $a\in \CM$, we write $\widehat{a}$ for the corresponding vector in $L^2(\CM,\phi)$. Moreover, we identify $\CM$ with its image in $\CB(L^2(\CM,\phi))$ under the canonical GNS representation.

\subsection{\textbf{Non-commutative $L^p$-spaces  ($1\leq p\leq\infty $)}}
Let \(\mathcal{M} \subseteq \mathcal{B}(\mathcal{H})\) be a von Neumann algebra equipped with a faithful, normal, semifinite (f.n.s.) trace \(\tau\). An (possibly unbounded) operator 
\[
x: \mathcal{D}(x) \subseteq \mathcal{H} \to \mathcal{H},
\]
which is densely defined, closed, and affiliated with \(\mathcal{M}\), i.e,  satisfying \(u'^{*} x u' = x\) for every unitary \(u' \in \mathcal{M}'\), will be denoted as \(x \,\eta\, \mathcal{M}\). Such an operator \(x\) is said to be \(\tau\)-measurable if, for every \(\varepsilon > 0\), there exists a projection \(e \in \mathcal{P}(\mathcal{M})\) such that \(\tau(1-e) < \varepsilon\) and \(e\mathcal{H} \subseteq \mathcal{D}(x)\). The collection of all \(\tau\)-measurable operators affiliated with \(\mathcal{M}\) is then defined by
\[
L^{0}(\mathcal{M},\tau) := \{\, x \,\eta\, \mathcal{M} : x \text{ is } \tau\text{-measurable}\,\}.
\]

The space \(L^{0}(\mathcal{M},\tau)\) forms a \(^*\)-algebra under the adjoint operation, strong sum \(x+y := \overline{x+y}\), and strong product \(x \cdot y := \overline{xy}\), where \(\overline{x}\) denotes the closure of \(x\). Detailed descriptions may be found in \cite{stratila2019lectures, hiai2021lectures}.

Several natural topologies arise on \(L^{0}(\mathcal{M},\tau)\). The first relevant one is the \emph{measure topology}, whose neighborhoods are of the form:  $ x + \mathcal{N}(\varepsilon,\delta),
~~ \varepsilon,\delta > 0,\; x \in L^{0}(\mathcal{M},\tau),$
where
\[
\mathcal{N}(\varepsilon,\delta)
 := \{\, x \in L^{0}(\mathcal{M},\tau) :\text{there exist}~ e \in \mathcal{P}(\mathcal{M}) 
 \text{ with } \tau(1-e) < \delta \text{ and } \|exe\| < \varepsilon \,\}.
\]
A net \(\{x_i\}_{i\in I}\) converges to \(x\) in the measure topology precisely when, for every \(\varepsilon,\delta>0\), there exists \(i_0\in I\) such that for all \(i\ge i_0\) one may find \(e_i\in \mathcal{P}(\mathcal{M})\) with \(\tau(1-e_i) < \delta\) and
\[
\|\, e_i (x_i - x)e_i \,\| < \varepsilon.
\]

\noindent By \cite[Theorem 1]{nelson1974notes}, the space \(L^{0}(\mathcal{M},\tau)\) is complete in the measure topology. Furthermore, this topology is metrizable and  \(\mathcal{M}\) is dense in \(L^{0}(\mathcal{M},\tau)\).
The topologies  of interest are  the \emph{bilateral almost uniform} (b.a.u.)  and  almost uniform (a.u.) topology.

\begin{defn}\label{bau conv defn}
A net \(\{x_i\}_{i\in I}\subseteq L^{0}(\mathcal{M},\tau)\) is said to converge \emph{bilaterally almost uniformly} (b.a.u.) (resp. \emph{almost uniformly} (a.u.)) to \(x\in L^{0}(\mathcal{M},\tau)\) if, for every \(\varepsilon>0\), there exists a projection \(e \in \mathcal{P}(\mathcal{M})\) such that \(\tau(1-e) < \varepsilon\) and 
\begin{align*}
\lim_{i} \|\, e(x_i - x)e \,\| = 0 ~~~ (\text{resp. }  \lim_{i} \|\, (x_i - x)e \,\| = 0).
\end{align*}
\end{defn}

 Clearly, almost uniform(a.u.) convergence of a net implies the bilateral almost uniform(b.a.u) convergence. 
The trace \(\tau\) on \(\mathcal{M}_+\) extends in the standard way to \(L^{0}(\mathcal{M},\tau)_+\) by
\[
\tau(x) := \int_0^{\infty} \lambda \, d\tau(e_\lambda),
\]
where \(x = \int_{0}^{\infty} \lambda\, de_\lambda\) is the spectral decomposition of \(x\).
 For $1\leq p <\infty$, 
consider the set 
$$L^p(\mathcal{M}, \tau):=\{x\in L^0(\mathcal{M},\tau): \tau(|x|^p)<\infty\}.$$
This forms a normed linear space with norm $\|x\|_p:=\tau(|x|^p)^{1/p}$. For $p=\infty$, we define $L^\infty(\mathcal{M}, \tau):= \mathcal{M}$ with norm $\|\cdot\|_\infty$, the usual operator norm $\|\cdot\|$ in $\mathcal{M}$. The space $L^p(\mathcal{M}, \tau)$ is called the \textit{non-commutative $L^p$- space}. 
We recall the following theorem from \cite{hiai2021lectures}.
\begin{thm}\label{Density_thm}
    For every \(p \in [1,\infty]\), $L^p(\mathcal{M}, \tau)$ is a Banach space with the norm
\(\|\cdot\|_p\). In particular, $L^2(\mathcal{M}, \tau)$ is a Hilbert space with the inner
product
\[
\langle x,y\rangle = \tau(y^*x).
\]
Moreover, $ \mathcal{M} \cap L^1(\mathcal{M}, \tau)$
is dense in $L^p(\mathcal{M}, \tau)$ for any \(p \in [1,\infty)\).
\end{thm}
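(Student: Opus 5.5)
The statement to be proved is Theorem~\ref{Density_thm}: $L^p(\CM,\tau)$ is a Banach space, $L^2$ is a Hilbert space with $\langle x,y\rangle=\tau(y^*x)$, and $\CM\cap L^1$ is dense in $L^p$ for $1\le p<\infty$. I will organise the proof around the measure topology on $L^0(\CM,\tau)$, which is complete by \cite[Theorem 1]{nelson1974notes}.

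\textbf{Step 1: $\|\cdot\|_p$ is a norm.} Homogeneity and faithfulness follow from $\tau$ being faithful, and the triangle inequality is the noncommutative Minkowski inequality. To get it, I would use H\"older's inequality $|\tau(xy)|\le \|x\|_p\|y\|_q$, proved first for bounded operators via generalized singular numbers, together with the duality formula
\[
\|x\|_p=\sup\{|\tau(xy)|:\|y\|_q\le 1\}.
\]
Minkowski then follows from the duality formula. For $p=\infty$ there is nothing to check.

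\textbf{Step 2: completeness.} This is the main obstacle. I would use a Chebyshev-type estimate: if $\|x\|_p\le\varepsilon\delta^{1/p}$, then $x\in\mathcal N(\varepsilon,\delta)$, by taking the spectral projection $e=\chi_{[0,\varepsilon]}(|x|)$. Hence a $\|\cdot\|_p$-Cauchy sequence $(x_n)$ is Cauchy in measure and, by completeness of $L^0(\CM,\tau)$, converges in measure to some $x$. The remaining point is lower semicontinuity of $\|\cdot\|_p$ (a Fatou lemma) under convergence in measure. I would obtain it from continuity of $\mu_t(\cdot)$ in measure, where $\mu_t$ denotes the generalized singular numbers, and from
\[
\tau(|x|^p)=\int_0^\infty \mu_t(x)^p\,dt .
\]
This gives $\|x-x_n\|_p\le\liminf_m\|x_m-x_n\|_p$, which is small for large $n$. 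So $x\in L^p$ and $x_n\to x$ in $L^p$.

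\textbf{Step 3: the Hilbert space structure.} For $x,y\in L^2$, H\"older gives $y^*x\in L^1$, so $\tau(y^*x)$ is defined. Sesquilinearity and $\langle x,x\rangle=\tau(|x|^2)=\|x\|_2^2$ are immediate from the properties of $\tau$. Combined with Step 2, this shows $L^2$ is a Hilbert space.

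\textbf{Step 4: density.} Let $x\in L^p$ with $p<\infty$, and write the polar decomposition $x=v|x|$. Put $e_n=\chi_{[1/n,n]}(|x|)$. Then $\tau(e_n)\le n^p\tau(|x|^p)<\infty$, so $x_n:=xe_n=v|x|e_n$ is bounded and lies in $L^1$, hence in $\CM\cap L^1$. Moreover
\[
\|x-x_n\|_p^p=\tau\bigl(|x|^p\chi_{[0,1/n)\cup(n,\infty)}(|x|)\bigr)\to 0
\]
by normality of $\tau$, equivalently by monotone convergence in the spectral integral. This shows $\CM\cap L^1$ is dense in $L^p$.
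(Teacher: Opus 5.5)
The paper does not prove this statement. It is quoted as background from Hiai's lecture notes \cite{hiai2021lectures}, so there is no internal argument to measure yours against.

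Your outline is the standard proof and it is correct. Minkowski comes from H\"older and duality. Completeness comes from the Chebyshev estimate, which places a $\|\cdot\|_p$-Cauchy sequence in the complete space $L^0(\CM,\tau)$, followed by a Fatou-type lower semicontinuity of $\|\cdot\|_p$ under convergence in measure. Density comes from spectral truncation.

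Essentially all of the analytic content is delegated to the Fack--Kosaki theory of generalized singular numbers: H\"older's inequality, $\tau(|x|^p)=\int_0^\infty\mu_t(x)^p\,dt$, and $\mu_t(x)\le\liminf_m\mu_t(x_m)$ when $x_m\to x$ in measure. So what you have is a reduction to those facts rather than a self-contained proof. That is at the same level of rigour as the paper's citation, and it makes explicit which facts are being used. Your Step 4 is the same truncation device the paper itself uses in Theorem~\ref{Density_of_M_c} (with $y_n=\int_{1/n}^n\lambda\,de_\lambda$) and again in Lemma~\ref{Fix_point_density}.

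A few small points to tighten:

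\textbf{The case $p=\infty$.} Completeness here is just the norm-closedness of $\CM$ in $\CB(\CH)$. Your Step 2 only treats $p<\infty$.

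\textbf{Membership of $x+y$ in $L^p$.} Before applying the duality formula in Step 1 you need $x+y\in L^p$, or else the formula for arbitrary $\tau$-measurable operators with the value $+\infty$ allowed. This follows at once from $\mu_{2t}(x+y)\le\mu_t(x)+\mu_t(y)$.

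\textbf{Strict inequalities.} The Chebyshev estimate gives $\tau(1-e)\le\delta$ and $\|exe\|\le\varepsilon$, not the strict inequalities in the definition of $\mathcal N(\varepsilon,\delta)$. This is harmless.

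\textbf{Two-sided versus one-sided measure topology.} The paper defines its measure topology through the two-sided quantity $\|exe\|$. Nelson's completeness theorem and the characterization via $\mu_\delta$ refer to the one-sided quantity $\|xe\|$. The two topologies coincide: if $\tau(1-e)<\delta$ and $\|eye\|<\varepsilon$, split $y=eye+ey(1-e)+(1-e)y$ and use subadditivity of $\mu_t$ to get $\mu_{2\delta}(y)\le\|eye\|<\varepsilon$. Your Step 2 uses this equivalence without saying so.
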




\subsection{Noncommutative dynamical systems}
\begin{defn}\label{nc dyn sys} Let \((E,\|\cdot\|)\) be a real ordered Banach space.
Then a \emph{noncommutative dynamical system} is a triple \((E,G,\gamma)\), where \(\gamma : G \to \mathcal{B}(E)\) is a mapping satisfying \(\gamma_s \circ \gamma_t = \gamma_{st}\) for all \(s,t \in G\), and the following conditions hold:
\begin{enumerate}
    \item for every \(a \in E\), the orbit map \(s \mapsto \gamma_s(a)\) is continuous on \(G\), where \(E\) is equipped with the norm topology (and the \(w^*\)-topology in the case \(E=\mathcal{M}\));
    \item the operators are uniformly bounded, i.e., \(\sup_{s \in G}\|\gamma_s\| < \infty\);
    \item positivity is preserved: if \(a \ge 0\), then \(\gamma_s(a) \ge 0\) for all \(s\in G\).
    
\end{enumerate}
\end{defn}
\noindent We further assume that if $E = \CM$, $\gamma_{s}$ is a *-automorphism on $\CM$ for all $s \in G$.
Further, suppose $ \CM$ is a finite von Neumann algebra with a f.n state $ \tau.$ We say that $(\CM, G, \alpha)$ is $\tau$-preserving if $ \tau \circ \alpha_g = \tau$, for all $ g \in G.$

\noindent Let $(\CM, G, \alpha)$ be a noncommutative dynamical system, where $G$ is a locally compact second countable amenable group acting on a finite von Neumann algebra $\CM$ with a faithful normal tracial state $\tau.$ 
The action is given by a homomorphism
\[
\alpha : G \to \mathrm{Aut}(\CM),
\]
where each $\alpha_g$ is a $*$-automorphism of $\CM$. This means that for every $g \in G$ and every $x \in \CM$,
\[
\alpha_g(x^*) = \alpha_g(x)^*.
\]
We also assume that the action preserves the trace, that is, $ \tau \circ \alpha_g = \tau \quad \text{for all } g \in G$.  We also have 
$\displaystyle\sup_{g \in G} \|\alpha_g\| \leq 1. $  Now let $x \in \CM \cap L^1(\CM, \tau)$. Then
\[
|\alpha_g(x)|^2=\alpha_g(x)^*\alpha_g(x) = \alpha_g(x^*x)=\alpha_g(|x|^2)=\alpha_g(|x|)^2.
\]
Hence, $ |\alpha_g(x)|=\alpha_g(|x|)$  and $|\alpha_g(x)|^n=\alpha_g(|x|^n) $ for all $ n \in \mathbb{N}$.
Hence,  by the continuous functional calculus and Weierstrass' theorem, 
\[|\alpha_g(x)|^p=\alpha_g(|x|^p), ~~~ \text{ for all }~~ 1\leq p < \infty.\]\label{alphamodx}
Thus,\begin{align*}
    \|\alpha_g(x)\|^p_p = \tau(|\alpha_g(x)|^p)=\tau(\alpha_g(|x|^p))=\tau(|x|^p)=\|x\|^p_p
    \implies \|\alpha_g(x)\|_p = \|x\|_p.
\end{align*}
\noindent Since $\CM \cap L^1(\CM, \tau)$ is dense in $L^p(M, \tau)$, it follows that $\alpha_g$ extends uniquely to a bounded operator on $L^p(\CM, \tau).$

\subsection{Amenable Groups}\label{amena}
In this subsection we discuss some basics of amenable groups
\begin{defn}
Let $G$ be a locally compact second countable (l.c.s.c.) group equipped with a left (resp. right) Haar measure $m$.
The group $G$ is called \emph{amenable} if it admits a left (resp. right) F\o lner sequence in the sense of the following definition.
\begin{enumerate}
\item[(i)] 
A sequence $(F_n)$ of nonempty compact subsets of $G$ is called a \emph{left (resp. right) F\o lner sequence} if
\begin{align*}
\frac{m(KF_n \,\triangle\, F_n)}{m(F_n)} \longrightarrow 0~~
(\text{resp.} \frac{m(F_nK \,\triangle\, F_n)}{m(F_n)} \longrightarrow 0)
\qquad \text{as } n \to \infty
\end{align*}
for every compact set $K \subset G$.
\item[(ii)] Let $ C> 0$, a  F\o lner sequence $(F_n)$ is said to be $C$-\textit{tempered}  if for all $n \in \N$, $$m\bigl(\bigcup_{k<n}F^{-1}_kF_n\bigr) \leq Cm(F_n).$$

\item[(iii)] A F\o lner sequence $(F_n)$ is said to satisfy the \textit{Tempelman} condition if there exists $C>0$ such that for all $n\in \N$, $$m(\bigcup_{k\leq n}F_k^{-1}F_n) \leq Cm(F_n).$$
\item[(iv)]  We also require the definition of admissible F\o lner sequences, for this definition we refer to \cite{cadilhac2022noncommutative}.
\end{enumerate}
\end{defn}

Possibly, some of the following results concerning amenable groups are folklore in the literature. However, since we have been unable to locate a reference containing these statements in the required form, we include the proofs for the reader’s convenience.
\begin{lem}\label{F'_n_construction_1}
 Let $G$ be a l.c.s.c amenable unimodular group with F\o lner sequence $(F_n)$. Given a compact symmetric set $K \subseteq G, e\in K$ and $k\in \N, k\geq 2$  there exists $ E_n \subseteq F_n$ and $N(K,k) \in \N$ such that  for all $ n \geq N(K,k)$, $E_nK^k \subseteq F_n$ and $\frac{m(F_n\setminus E_n)}{m(F_n)} \leq \frac{1}{k}.$   
\end{lem}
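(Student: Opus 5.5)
The natural choice is $E_n := \{g \in F_n : gK^k \subseteq F_n\}$. This is the largest subset of $F_n$ for which the inclusion $E_nK^k \subseteq F_n$ holds, so the inclusion is automatic and we only need to show that $E_n$ is large, i.e. $m(F_n\setminus E_n)/m(F_n) \to 0$. (Since $e\in K$, the condition $gK^k\subseteq F_n$ already forces $g\in F_n$, so $E_n=\{g : gK^k\subseteq F_n\}$.) Measurability is clear: $E_n = \bigcap_{h\in K^k} F_n h^{-1}$ is an intersection of closed sets, hence closed, and so Borel.

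Put $L := K^k$. It is compact (a continuous image of $K^{\times k}$), symmetric (as $K$ is), and contains $e$. If $g \in F_n\setminus E_n$, then there is some $h\in L$ with $gh\notin F_n$. Writing $g = (gh)h^{-1}$ with $h^{-1}\in L$, we get $g \in (G\setminus F_n)L$. Hence
\[
F_n\setminus E_n \subseteq F_n \cap (G\setminus F_n)L .
\]
The task is now to control the measure of this boundary-type set by a F\o lner quantity. I would use the right F\o lner condition (Lemma~\ref{Lemma_vand} works with right F\o lner sequences). In the unimodular case, inversion preserves Haar measure, so left and right F\o lner sequences correspond via $F_n\mapsto F_n^{-1}$, and we may take $(F_n)$ to satisfy $m(F_nL\triangle F_n)/m(F_n)\to 0$ for each compact $L$.

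The main obstacle is that the F\o lner condition controls $F_nL\setminus F_n$ (the outer boundary), whereas we need the inner boundary $F_n\cap (G\setminus F_n)L$. The standard remedy is a Fubini/averaging argument:
\[
\int_L m(F_n\setminus F_n h)\,dm(h) = \int_{F_n} m(\{h\in L: g\notin F_nh\})\,dm(g).
\]
By right-invariance and unimodularity, $m(F_n\setminus F_nh)=m(F_nh\setminus F_n)\le m(F_nL\triangle F_n)$ for $h\in L$. This shows that most $g\in F_n$ satisfy $gh^{-1}\in F_n$ for most $h\in L$, but not for all $h$, which is what $E_n$ requires. The pointwise requirement "for all $h\in L$" is therefore the real difficulty, and a slicker route avoids it.

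The slicker route is a direct measure comparison. Its key claim is
\[
m\bigl(F_n\cap (G\setminus F_n)L\bigr) \le m(F_nL\setminus F_n)\cdot C_L,
\]
which is not true in general. I would therefore first try to replace $F_n$ by a F\o lner sequence adapted to $L$. Failing that, I would use the honest estimate
\[
m(F_n\setminus E_n) \le \sum_{j} m(F_n\setminus F_n h_j^{-1}),
\]
which holds if $L$ could be covered by finitely many points. It does not hold literally, so one enlarges slightly: choose a compact symmetric neighborhood $V$ of $e$ and finitely many $h_1,\dots,h_r$ with $L\subseteq \bigcup_j h_jV$. Then $g\notin E_n$ forces $g\notin F_n (h_jV)^{-1}$-interior for some $j$. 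Bounding each of the $r$ terms by the F\o lner ratio for the compact set $h_jV$ makes the total $r\cdot\sup_j m(F_n\triangle F_n (h_jV)^{\pm1})/m(F_n)\to 0$. Choosing $N(K,k)$ so that this is $\le 1/k$ for $n\ge N(K,k)$ finishes the proof. I expect verifying this covering step, namely that each "bad" $g$ lies in a set controlled by a single F\o lner difference, to be the delicate part. If it fails, I would instead state the result for a subsequence or for a suitably modified F\o lner sequence, as is common in the literature.
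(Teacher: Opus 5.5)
Your $E_n=\{g\in F_n : gK^k\subseteq F_n\}$ is the set the paper intends. The paper's added identity $E_n=F_n\cap F_nK^k$ is wrong, since $e\in K$ gives $F_nK^k\supseteq F_n$. You also identify the real issue correctly: the F\o lner condition controls the outer boundary $F_nL\setminus F_n$, whereas $F_n\setminus E_n=\{g\in F_n : gL\not\subseteq F_n\}$, with $L=K^k$, is an inner boundary. The covering step does not get around this. After covering $L$ by $h_1V,\dots,h_rV$ you still have to bound $m(\{g\in F_n : gh_jV\not\subseteq F_n\})$. That is the original problem again for a set $h_jV$ with nonempty interior, and no quantity $m(F_nh_jV\triangle F_n)$ controls it. Your union bound $m(F_n\setminus E_n)\le\sum_{h\in L}m(F_nh\setminus F_n)$ is available only when $L$ is finite (e.g.\ $G$ discrete, or $K$ finite). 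There it does prove the lemma, provided $(F_n)$ is a \emph{right} F\o lner sequence. Passing to $F_n^{-1}$ changes the sets, so it is not a valid reduction from a left F\o lner hypothesis; for non-abelian groups a left F\o lner sequence need not be right F\o lner.

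No argument can close this gap, because the statement is false for general F\o lner sequences in non-discrete groups. The paper's one-line appeal to ``the F\o lner condition'' has the same hole. Take $G=\mathbb{R}$ and $F_n=[0,n]\setminus U_n$, where $U_n$ is an open set of measure less than $1/2$ that is dense in $[0,n]$. Then $F_n$ is compact, contains no interval, and satisfies $m(F_n)\ge n-\tfrac12$. For nonempty compact $L\subseteq[-R,R]$ one checks $m\bigl((F_n+L)\triangle F_n\bigr)\le 3R+2$, so $(F_n)$ is a F\o lner sequence; it even satisfies the Tempelman condition with $C=4$. But for $K=[-1,1]$, any $E_n$ with $E_n+K^k\subseteq F_n$ must be empty, so $m(F_n\setminus E_n)/m(F_n)=1$ for every $n$. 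Passing to a subsequence therefore does not help. The lemma needs an extra hypothesis: discreteness of $G$ (or finiteness of $K$) together with the right F\o lner condition, or a strong F\o lner (van Hove) condition that controls the inner boundary $\{g\in F_n : gL\not\subseteq F_n\}$ for every compact $L$. Alternatively, it needs a genuine modification of the sequence $(F_n)$, as your fallback suggests.
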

\begin{proof}
    Let us define $E_n:= \{x\in F_n: xK^k \subseteq F_n\}=F_n \cap F_nK^{k}.$ Clearly, $E_n \subseteq F_n$ and $E_nK^k \subseteq F_n$, so we use the F\o lner condition to obtain the inequality.
\end{proof}
\begin{lem}\label{F'_n_construction_2}
Let $G$ be a l.c.s.c amenable unimodular group with F\o lner sequence $(F_n)$ which satisfies Tempelman condition   (i.e.  there exists $ C>0$ such that $m(\displaystyle\bigcup_{k\leq n} F_k^{-1}F_n) \leq Cm(F_n)~~\text{ for all } n \in \N$). Given a symmetric compact set $K \subseteq G$ with $e\in K$ and $k\geq 2$, there exists $ N(K,k) \in \N$ and a compact set $F'_{n,k} \subseteq F_n$ such that for all $ n \geq N(K,k)$ it has the following properties:\\
\begin{enumerate}
    \item $\frac{m(F_n \setminus F'_{n,k})}{m(F_n)} \leq \frac{1}{k}$
    \item $m({F'^{-1}_{n,k}}F'_{n,k}) \leq C\frac{k}{k-1}m(F'_{n,k}) \leq 2Cm(F'_{n,k})$
    \item $\frac{m(K{F'^{-1}_{n,k}}F'_{n,k} \Delta {F'^{-1}_{n,k}}F'_{n,k})}{m({F'^{-1}_{n,k}}F'_{n,k})} \leq \frac{2C}{k}$.
\end{enumerate}
\end{lem}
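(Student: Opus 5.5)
The plan is to construct $F'_{n,k}$ by shaving off a boundary layer of $F_n$, just as in Lemma \ref{F'_n_construction_1}. We then use the Tempelman condition to control $m(F'^{-1}_{n,k}F'_{n,k})$ and the Følner property to control the $K$-boundary of the difference set. Concretely, we take $F'_{n,k}:=\{x\in F_n : xK^{k}\subseteq F_n\}$, or a compact subset of it; this is the set $E_n$ of Lemma \ref{F'_n_construction_1}. Since $G$ is unimodular, left and right Følner conditions can be interchanged via inversion, with $m(A^{-1})=m(A)$. Lemma \ref{F'_n_construction_1} therefore gives $N(K,k)$ such that $m(F_n\setminus F'_{n,k})\le \frac1k m(F_n)$ for $n\ge N(K,k)$. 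This is property (1), and it immediately gives $m(F'_{n,k})\ge \frac{k-1}{k}m(F_n)$. Compactness is obtained by inner regularity: we pass to a compact subset losing arbitrarily little measure, adjusting the constant in (1) if needed.

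For (2), note that $F'_{n,k}\subseteq F_n$ gives $F'^{-1}_{n,k}F'_{n,k}\subseteq F_n^{-1}F_n\subseteq \bigcup_{j\le n}F_j^{-1}F_n$. The Tempelman condition then yields $m(F'^{-1}_{n,k}F'_{n,k})\le Cm(F_n)\le C\frac{k}{k-1}m(F'_{n,k})$, and this is at most $2Cm(F'_{n,k})$ because $k\ge 2$.

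For (3), we set $D=F'^{-1}_{n,k}F'_{n,k}$. Since $e\in K$, we have $D\subseteq KD$, so $KD\,\Delta\, D=KD\setminus D$. We also have $KD=(F'_{n,k}K^{-1})^{-1}F'_{n,k}=(F'_{n,k}K)^{-1}F'_{n,k}$ by symmetry of $K$. By construction $F'_{n,k}K\subseteq F'_{n,k}K^k\subseteq F_n$, so $KD\subseteq F_n^{-1}F_n$ and $m(KD\setminus D)\le m(F_n^{-1}F_n)-m(D)$. The main obstacle is bounding this by $\frac{2C}{k}m(D)$. The numerator $m(F_n^{-1}F_n)$ is at most $Cm(F_n)$, but a matching lower bound $m(D)\gtrsim m(F_n^{-1}F_n)-\frac{2C}{k}m(D)$ does not follow from the Tempelman condition alone. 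So the first step is to estimate the part not covered more cleverly. We write $KD\setminus D\subseteq (F_n\setminus F'_{n,k})^{-1}F_n\cup F_n^{-1}(F_n\setminus F'_{n,k})$, restricted to the relevant elements. It would suffice to show that each piece has measure at most $C\cdot m(F_n\setminus F'_{n,k})\le \frac{C}{k}m(F_n)$, and then to use $m(D)\ge m(F'_{n,k})$, which gives roughly $\frac{2C}{k}\cdot\frac{k}{k-1}$. If this is not directly available, the fallback is to use the freedom in choosing $F'_{n,k}$. We would take $F'_{n,k}=F_n\cap F_nK^{j}$ for a layer $j\in\{1,\dots,k\}$ and use a pigeonhole argument. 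The nested sets $(F'_{n,k}K^{j})^{-1}F'_{n,k}$ for $j=0,\dots,k$ all lie in $F_n^{-1}F_n$, which has measure $\le Cm(F_n)\le 2Cm(D)$. Hence some consecutive layer $j$ has increment at most $\frac{2C}{k}m(D)$. Choosing that layer gives (3), while (1) and (2) persist because the layers shrink by at most the $K^{k}$-boundary of $F_n$.
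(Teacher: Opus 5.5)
Your (1) and (2) are fine, taking $F'_{n,k}=E_n$ from Lemma \ref{F'_n_construction_1}. But (3), which is the real content of the lemma, is never established.

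\textbf{The first route fails.} For any nonempty $A\subseteq F_n$ and any $a\in A$ we have $m(A^{-1}F_n)\ge m(a^{-1}F_n)=m(F_n)$. So a piece such as $(F_n\setminus F'_{n,k})^{-1}F_n$ can never have measure at most $\frac{C}{k}m(F_n)$ once $k>C$. The Tempelman condition gives no bound on $m(A^{-1}F_n)$ in terms of $m(A)$.

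\textbf{The fallback pigeonholes over the wrong family.} The layer idea is right, but the sets $(F'_{n,k}K^{j})^{-1}F'_{n,k}=K^{j}F'^{-1}_{n,k}F'_{n,k}$ are one-sided thickenings of a fixed difference set $D$. They are not difference sets of any candidate set. A small increment $m(K^{j+1}D\setminus K^{j}D)$ therefore says nothing about $m(KD'\setminus D')$, where $D'=F'^{-1}F'$ for whatever $F'$ you finally pick. You never check that $K$ applied to the chosen difference set lands inside the next layer.

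Two smaller points. Literally $F_n\cap F_nK^j=F_n$, because $e\in K$; the set you mean is $\{x\in F_n: xK^j\subseteq F_n\}$. The inner-regularity step is unnecessary, since these sets are already closed subsets of $F_n$ and hence compact. It is also harmful: (3) is not stable under passing to subsets, and the statement does not let you weaken the constant in (1).

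\textbf{The missing idea.} Each layer must itself be the difference set of a candidate, and $K$ times a layer must sit inside the next layer. The paper gets this with two-sided layers
\[
A_i:=K^iE_n^{-1}E_nK^i=(E_nK^i)^{-1}(E_nK^i),\qquad 0\le i\le k .
\]
Since $e\in K$, these sets increase and $KA_i\subseteq A_{i+1}$. Since $E_nK^k\subseteq F_n$, we have $A_k\subseteq F_n^{-1}F_n$. Hence $\sum_{i<k}m(A_{i+1}\setminus A_i)\le m(F_n^{-1}F_n)\le Cm(F_n)$, and some $i$ satisfies $m(A_{i+1}\setminus A_i)\le \frac{C}{k}m(F_n)$.

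Now take $F'_{n,k}:=E_nK^i$. It is compact and satisfies $E_n\subseteq F'_{n,k}\subseteq F_n$, which gives (1) and (2). Moreover $F'^{-1}_{n,k}F'_{n,k}=A_i$, so
\[
m(KA_i\setminus A_i)\le \tfrac{C}{k}m(F_n)\le \tfrac{2C}{k}m(F'_{n,k})\le\tfrac{2C}{k}m(A_i).
\]

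\textbf{An alternative repair closer to your notation.} Use the interiors $I_j:=\{x\in F_n: xK^j\subseteq F_n\}$ and set $D_j:=I_j^{-1}I_j$. Then $I_jK\subseteq I_{j-1}$, so $KD_j=(I_jK)^{-1}I_j\subseteq D_{j-1}$, and the $D_j$ decrease in $j$. Pigeonholing over $m(D_{j-1}\setminus D_j)$ for $1\le j\le k$ gives (3) for $F'_{n,k}=I_j\supseteq E_n$.
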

\begin{proof}
 Lemma \ref{F'_n_construction_1} implies that  there exists $ N(K,k) \in \N$ such that 
 $$E_nK^k \subseteq F_n ~~\text{and}~~ \frac{m(F_n\setminus E_n)}{m(F_n)} \leq \frac{1}{k},  ~~\text{ for all } n\geq N(K,k).$$ 
 Define $A_i:= K^i{E_n}^{-1}E_nK^i~~\text{for}~~ i\in \{1,2,\cdots,k\}$. Since $K$ is symmetric, it follows that   $A_i \subseteq F_n^{-1}F_n$. 
It also follows that  $A_i \subseteq A_{i+1}$  for $  i\in \{1,2,\cdots, k-1\}$  and 
 \begin{align}\label{Inequality_of_Tempelman_condition}
     ~~ \sum_{i=0}^{k-1}m(A_{i+1}\setminus A_i)=m(A_k \setminus A_0) \leq m(F_n^{-1}F_n) \leq Cm(F_n).
 \end{align}
Hence,  there exists $ i \in \{1,2,\cdots,k-1\} $ such that 
 $$ 
 m(A_{i+1}\setminus A_i)\leq \frac{Cm(F_n)}{k}.
 $$ 
 For this $i$ we define $F'_{n,k}:= E_nK^i$. Clearly, $E_n \subseteq F'_{n,k} \subseteq F_n$, which implies that 
 \begin{align}\label{Property_1}
     F_n\setminus F'_{n,k} \subseteq F_n\setminus E_n \implies \frac{m(F_n \setminus F'_{n,k})}{m(F_n)} \leq \frac{1}{k}.
 \end{align}
 Since $F'_{n,k} \subseteq F_n$, ${F'^{-1}_{n,k}}F'_{n,k} \subseteq {F^{-1}_n}F_n$, $m({F'^{-1}_{n,k}}F'_{n,k}) \leq m({F^{-1}_n}F_n) \leq Cm(F_n).$
 Therefore, $(2)$ follows from Ineq. (\ref{Property_1}).\\
 Note that ${F'^{-1}_{n,k}}F'_{n,k} = A_i$ and ${KF'^{-1}_{n,k}}F'_{n,k} \subseteq A_{i+1}.$
 Hence,
 \begin{align*}
\frac{m(K{F'^{-1}_{n,k}}F'_{n,k} \Delta {F'^{-1}_{n,k}}F'_{n,k})}{m({F'^{-1}_{n,k}}F'_{n,k})} &\leq \frac{m(A_{i+1}\setminus A_i)}{m({F'^{-1}_{n,k}}F'_{n,k})}\\ 
&\leq \frac{Cm(F_n)}{k}\cdot \frac{1}{m({F'^{-1}_{n,k}}F'_{n,k})}\\
&=\frac{C}{k}\frac{m(F_n)}{m(F'_{n,k})}\cdot \frac{m(F'_{n,k})}{m({F'^{-1}_{n,k}}F'_{n,k})}
\leq \frac{2C}{k}.
 \end{align*}
 The last inequality from the second last line follows the fact that $m(F'_{n,k})=m(y^{-1}F_{n,k})\leq m({F'^{-1}_{n,k}}F_{n,k})$ and the Ineq. (\ref{Property_1}).
\end{proof}
Suppose $G$ is a l.c.s.c group. We can write $G$ to be the union of countably many compact sets. Further, we can take those compact sets to be symmetric, increasing and containing $e$. Let $G=\displaystyle\bigcup_{r\geq1} K_r$. Now given admissible F\o lner sequence $(F_n)$ ,using these $K_r$ we will construct another F\o lner sequence $(F'_n)$ such that ${F'^{-1}_n}F'_n$ is again a F\o lner sequence and $(F'_n)$ satisfies Tempelman condition. 
\begin{thm}\label{existence_of_F'_n}
    Let $G$ be a l.c.s.c amenable unimodular group with F\o lner sequence $(F_n)$ that satisfies Tempelman condition. Then there exists $F'_n \subseteq F_n$ such that for large enough $n$ we have the following: 
    \begin{enumerate}
    \item $\displaystyle\lim_{n\to \infty}\frac{m(F_n\setminus F'_n)}{m(F_n)}=0.$
        \item $({F'^{-1}_n}F'_n)$ is a F\o lner sequence. 
        \item $(F'_n)$ is F\o lner sequence that satisfies the Tempelman condition for large $n \in \mathbb{N}$.
    \end{enumerate}
\end{thm}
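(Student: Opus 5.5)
We will obtain $(F'_n)$ by a diagonal argument from Lemma \ref{F'_n_construction_2}, using the exhaustion $G=\bigcup_{r\geq 1}K_r$ by increasing, symmetric compact sets containing $e$.

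\textbf{Choice of thresholds.} For each $r\geq 2$, apply Lemma \ref{F'_n_construction_2} with $K=K_r$ and $k=r$. This gives an integer $N(K_r,r)$ and, for every $n\geq N(K_r,r)$, a compact set $F'_{n,r}\subseteq F_n$. Enlarging the integers if necessary, we may assume $N_r:=N(K_r,r)$ is strictly increasing in $r$.

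\textbf{Diagonal definition.} For $n\geq N_2$, let $r(n)$ be the largest $r$ with $N_r\leq n$, and set $F'_n:=F'_{n,r(n)}$. For $n<N_2$, simply put $F'_n:=F_n$; the statement concerns only large $n$. Then $r(n)\to\infty$ as $n\to\infty$.

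\textbf{Property (1).} Part (1) of Lemma \ref{F'_n_construction_2} gives $m(F_n\setminus F'_n)/m(F_n)\leq 1/r(n)\to 0$.

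\textbf{Property (2).} Fix a compact set $K$. Since the $K_r$ are an increasing compact exhaustion of $G$, their interiors should cover $K$, so $K\subseteq K_{r_0}$ for some $r_0$. This step needs the exhaustion chosen so that each $K_r$ lies in the interior of $K_{r+1}$, which is standard for l.c.s.c. groups. For $r(n)\geq r_0$ we have
\[
m(KF'^{-1}_nF'_n\,\triangle\,F'^{-1}_nF'_n)\leq m(K_{r(n)}F'^{-1}_nF'_n\setminus F'^{-1}_nF'_n),
\]
because $e\in K$ forces the symmetric difference to be a set difference. By part (3) of Lemma \ref{F'_n_construction_2}, this quantity is at most $\tfrac{2C}{r(n)}\,m(F'^{-1}_nF'_n)$, which tends to $0$ after normalising. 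That proves the left F\o lner property. For the right F\o lner property, note that the set $F'^{-1}_nF'_n$ is symmetric and $G$ is unimodular, so inversion preserves $m$. Applying inversion converts $KS\triangle S$ into $SK^{-1}\triangle S$ for $S=F'^{-1}_nF'_n$, and $K^{-1}$ is again compact.

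\textbf{Property (3), F\o lner part.} We will show $(F'_n)$ is F\o lner from property (1) together with $F'_n\subseteq F_n$. For compact $K$,
\[
m(F'_nK\triangle F'_n)\leq m(F_nK\triangle F_n)+m\bigl((F_n\setminus F'_n)K\bigr)+m(F_n\setminus F'_n).
\]
The middle term is the delicate one. Crude estimates fail here, since $m(AK)$ is not controlled by $m(A)$. The fix is to use the structure of the construction: $F'_n=E_nK_{r(n)}^{i}\supseteq E_nK$ once $K\subseteq K_{r(n)}$, and $E_nK_{r(n)}^{r(n)}\subseteq F_n$. Hence $F'_nK\subseteq E_nK^{i+1}_{r(n)}\subseteq F_n$, and so
\[
F'_nK\triangle F'_n\subseteq F_n\setminus F'_n,
\]
whose measure is $o(m(F_n))=o(m(F'_n))$. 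This structural observation is what I expect to carry the argument, and it is the main point to verify carefully.

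\textbf{Property (3), Tempelman part.} We need $m\bigl(\bigcup_{k\leq n}F'^{-1}_kF'_n\bigr)\leq C'm(F'_n)$. Since $F'_k\subseteq F_k$, the union is contained in $\bigcup_{k\leq n}F_k^{-1}F_n$, which has measure at most $Cm(F_n)\leq \tfrac{r(n)}{r(n)-1}\,Cm(F'_n)\leq 2Cm(F'_n)$. So the Tempelman condition holds with constant $2C$.
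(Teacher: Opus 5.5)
Your proof is correct and builds $(F'_n)$ the same way the paper does. You use the same diagonal choice $F'_n=F'_{n,r(n)}$ from Lemma \ref{F'_n_construction_2} with $K=K_r$, $k=r$, and you derive (1), (2) (from part (3) of that lemma) and the Tempelman bound with constant $2C$ exactly as the paper does. The differences are in scope and in one sub-step.

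\textbf{Comparison.} The paper checks (2) and the F\o lner part of (3) only for single group elements. In (2) it bounds $hS\setminus S$ and $S\setminus hS$ separately, the latter via $h^{-1}\in K_{r(n)}$. In (3) it uses
\[
m(gF'_n\triangle F'_n)\le m(gF'_n\triangle gF_n)+m(gF_n\triangle F_n)+m(F_n\triangle F'_n),
\]
which relies on $m(gA)=m(A)$ and so does not extend to a compact set $K$ in place of $g$. You work with compact $K$ throughout, which is why you need an exhaustion with $K_r\subseteq\operatorname{int}K_{r+1}$. You get the other side in (2) by inversion, and in (3) you replace the triangle inequality by the structural containment $F'_nK\subseteq E_nK_{r(n)}^{i+1}\subseteq F_n$. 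That step is valid, but it relies on the proof of Lemma \ref{F'_n_construction_2} rather than its statement, and it gives only the right-sided condition.

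\textbf{The obstacle is avoidable.} The difficulty that pushed you toward the structural argument comes from how you split the symmetric difference. Since $F'_n\subseteq F_n$,
\[
F'_nK\setminus F'_n\subseteq (F_nK\setminus F_n)\cup(F_n\setminus F'_n),\qquad KF'_n\setminus F'_n\subseteq (KF_n\setminus F_n)\cup(F_n\setminus F'_n).
\]
So (1) together with the F\o lner property of $(F_n)$ already gives (3) for compact sets, on whichever side $(F_n)$ is F\o lner, with no structural input.

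\textbf{A detail to fix in (2) and (3).} Replacing the symmetric difference by a one-sided difference uses $e\in K$, which an arbitrary compact $K$ need not satisfy. The missing half is controlled as in the paper. For $k_0\in K$,
\[
m(S\setminus KS)\le m(S\setminus k_0S)=m(k_0^{-1}S\setminus S)\le m(K_{r(n)}S\setminus S),
\]
since $k_0^{-1}\in K_{r(n)}$ by symmetry. The right-sided version in (3) is identical, using right invariance of $m$, which holds by unimodularity. This costs only a factor $2$.
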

\begin{proof}
 Applying Lemma \ref{F'_n_construction_2} for each $r\in \N $ with $r\geq 2$, we obtain that there exist $N_r \in \mathbb{N}, ~~N_r < N_{r+1}$ and $F'_{n,r} \subseteq F_n$ such that for all $n \geq N_r$, we have the following
 \begin{align}\label{three_inequalities}
     \frac{m(F_n \setminus F'_{n,r})}{m(F_n)} \leq \frac{1}{r},~~ m({F'^{-1}_{n,r}}F'_{n,r})  \leq 2Cm(F'_{n,r}) ~~\text{and}~~ \frac{m(K_r{F'^{-1}_{n,r}}F'_{n,r} \Delta {F'^{-1}_{n,r}}F'_{n,r})}{m({F'^{-1}_{n,r}}F'_{n,r})} \leq \frac{2C}{r}.
 \end{align}
For each $ n \in \N $  with $ n\geq N_2$, define $r(n):=max\{t\in \mathbb{N}:N_t \leq n\}.$ Clearly, $N_{r(n)} \leq n$. Given $l \in \mathbb{N}$, we note that   $r(n) \geq l$,  for all $ n\geq N_l$. Hence, $r(n) \to \infty$ as $n\to \infty.$\\
Let, $F'_n= F'_{n,r(n)}$. Therefore, $(1)$ immediately follows from Inequality in (\ref{three_inequalities}).

\medskip
\noindent
Suppose  $h\in G$. Since $G=\displaystyle\bigcup_{r\geq 1} K_r$'s with  $K_r$'s are increasing and $r(n) \to \infty$ as $n \to \infty$, there exists $M \in \N$ such that $h \in K_{r(n)}$, for all $n \geq M$. Therefore,  for all $n\geq M$, we observe the following: 
\begin{align*}
  &h{F'^{-1}_n}F'_n\setminus {F'^{-1}_n}F'_n \subseteq K_{r(n)}{F'^{-1}_{n,r(n)}}F'_{n,r(n)} \setminus {F'^{-1}_{n,r(n)}}F'_{n,r(n)}\\
  \implies & \frac{m(h{F'^{-1}_n}F'_n\setminus {F'^{-1}_n}F'_n)}{m({F'^{-1}_n}F'_n)} \leq \frac{2C}{r(n)}, ~~ \text{by Inequality in } (\ref{three_inequalities}).   
\end{align*}
Further, we note that 
\begin{align*}
 &{F'_n}^{-1}F'_n\setminus h{F'_n}^{-1}F'_n = h(h^{-1}{F'_n}^{-1}F'_n\setminus {F'_n}^{-1}F'_n)\\
 \implies & m({F'_n}^{-1}F'_n\setminus h{F'_n}^{-1}F'_n)=m(h^{-1}{F'_n}^{-1}F'_n\setminus {F'_n}^{-1}F'_n)   
\end{align*}
Since $K_r$'s are symmetric, $h^{-1} \in K_{r(n)}$. Hence, similarly, we obtain the following  
\begin{align*}
   \frac{ m({F'_n}^{-1}F'_n\setminus h{F'_n}^{-1}F'_n) }{m({F'_n}^{-1}F'_n)} \leq \frac{2C}{r(n)}.
\end{align*}
Consequently, we conclude that
\begin{align*}
    \frac{m(h{F'_n}^{-1}F'_n \Delta {F'_n}^{-1}F'_n)}{m({F'_n}^{-1}F'_n)} \leq \frac{4C}{r(n)},
\end{align*}
which proves $(2).$\\
Let $g\in G$. Using the triangle inequality of the symmetric difference $\Delta$ we have 
\begin{align}\label{triangle_ineq_of_Delta}
    m(gF'_n \Delta F'_n) \leq m(gF'_n \Delta gF_n) + m(gF_n \Delta F_n) + m(F_n \Delta F'_n).
\end{align}
Now, first inequality of (\ref{three_inequalities}) gives us $m(F'_n) \leq m(F_n) \leq 2m(F'_n)$.
Therefore, from the Ineq.(\ref{triangle_ineq_of_Delta}), we get
\begin{align*}
 \frac{m(gF'_n \Delta F'_n)}{m(F'_n)} &\leq \frac{m(gF'_n \Delta gF_n)}{m(F'_n)} + \frac{m(gF_n \Delta F_n)}{m(F'_n)}+ \frac{m(F_n \Delta F'_n)}{m(F'_n)}\\
 &\leq 2\frac{m(F_n \setminus F'_n)}{m(F_n)}+2\frac{m(gF_n \Delta F_n)}{m(F_n)} + 2\frac{m(F_n \setminus F'_n)}{m(F_n)}\\
 &\leq 4\frac{m(F_n \setminus F'_n)}{m(F_n)} + 2\frac{m(gF_n \Delta F_n)}{m(F_n)}.
\end{align*}
By the F\o lner property of $(F_n)$ and using $(1)$ of this theorem,   it follows that 
$(F'_n)$ is F\o lner sequence. Now it is immediate that $(F'_n)$ satisfies the Tempelman condition from the fact that $F'_n \subset F_n$ and $m(F_n) \leq 2m(F'_n)$ for large enough $n \in \N.$ This completes our proof.  
\end{proof}



\section{\textbf{Jacob de Leeuw Glicksberg decomposition  for amenable group actions on  finite von Neumann algebra}}\label{JdLG}
\noindent In this section we study a JdLG decomposition for finite von Neumann algebras for the action by an amenable group on a finite von Neumann algebra.
In October 2025, Sohail Farhangi,Y.Kuznetsova and Mickey Barthmann proved the following version of JdLG decomposition for the amenable group action on a Banach space.
\begin{thm}\label{JdlG_decomposition}\cite{barthmann-farhangi-kuznetsova2025jacobsdeleeuwglicksbergdecomposition}
Let $G$ be a locally compact amenable group, $m$ a left Haar measure,
$ (F_i)_{i \in I}$ a left F\o lner net, $E$ a Banach space, and
$\pi$ a relatively weakly compact representation of $G$ on $E$.
Then $\pi$ is JdLG-admissible, and it follows that 
\begin{align*}
E_c :&= \{\, \xi \in E : \pi(G)\xi \text{ is relatively compact in } E \,\}  \text{ and }\\
E_{\mathrm{wm}} :&= \left\{\, \xi \in E :
\lim_{i} \frac{1}{m(F_i)}
\int_{F_i} \bigl| \langle \pi(g)\xi, x' \rangle \bigr| \, dm(g) = 0
\text{ for all } x' \in E' \right\}.
\end{align*}
\end{thm}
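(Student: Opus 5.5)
The plan is the classical semigroup-compactification route, adapted to locally compact amenable $G$. Let $\CS$ be the closure of $\pi(G)$ in $\CB(E)$ for the weak operator topology. Relative weak compactness of $\pi$ makes $\CS$ a compact semitopological semigroup of contractions (up to the uniform bound $\sup_g\|\pi(g)\|$). By the Ellis--Numakura lemma, the minimal ideal $\CK(\CS)$ contains idempotents. Since $\CS$ is the closure of a group image, it is right-amenable in the appropriate sense, so I would show that $\CK(\CS)$ is a compact group with a unique idempotent $P$; the standard argument is the de Leeuw--Glicksberg one. Then $P$ is a bounded projection commuting with $\pi(G)$, and $E=PE\oplus(1-P)E$. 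This is the part I would call \emph{JdLG-admissibility}.

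\textbf{Identifying $PE$ with $E_c$.} On $PE$ the group $\CK(\CS)=P\CS$ acts as a weakly compact group of operators. A weakly compact group acting on a Banach space is strongly continuous and compact in the strong operator topology (Glicksberg's theorem), so each orbit $\pi(G)\xi$ with $\xi\in PE$ is relatively norm compact. Conversely, suppose $\pi(G)\xi$ is relatively compact. Then the weak and strong closures of the orbit agree, and the restriction of $\CS$ to the orbit closure is a group of isometries (after renorming). Hence $P\xi=\xi$, since $P$ acts as the identity of that group.

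\textbf{Identifying $(1-P)E$ with $E_{wm}$.} First, $\xi\in\ker P$ if and only if $0\in\overline{\pi(G)\xi}^{w}$, which holds because $\CK(\CS)\xi=\{0\}$. Next, fix $x'\in E'$. The function $g\mapsto\langle\pi(g)\xi,x'\rangle$ is weakly almost periodic, and so is its modulus, since $WAP(G)$ is a $C^*$-algebra. Now use the unique invariant mean $M$ on $WAP(G)$: the Ryll-Nardzewski/Eberlein theory gives that Følner averages of any $f\in WAP(G)$ converge to $M(f)$ along any Følner net. This is the step where amenability and the Følner net enter. It therefore suffices to show $M(|\langle\pi(\cdot)\xi,x'\rangle|)=0$ for $\xi\in\ker P$. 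I would use that $M$ is realized by evaluating at the minimal ideal: $M(f)=\int_{\CK(\CS)}\tilde f\,dk$, where $\tilde f$ is the extension of $f$ to the compactification and $dk$ is Haar measure on the compact group $\CK(\CS)$. For $\xi\in\ker P$ we get $\tilde f(k)=|\langle k\xi,x'\rangle|=0$ on $\CK(\CS)$, so the average is zero. For the converse inclusion $E_{wm}\subseteq\ker P$, take $\xi\in E_{wm}$. Its component $P\xi$ also has vanishing averages, because $P$ commutes with the action and $P'x'$ is again a functional. But $P\xi\in E_c$, and on the compact group part a nonzero vector has some $x'$ with $M(|\langle k\cdot,x'\rangle|)>0$ by continuity on $\CK(\CS)$. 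Hence $P\xi=0$.

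\textbf{Main obstacle.} I expect the hardest step to be the realization of the invariant mean on $WAP(G)$ as Haar integration over the minimal ideal, together with convergence of Følner averages to it, for a merely locally compact group and a net. To handle measurability, I would first reduce to the separable closed invariant subspace generated by $\xi$ and norm-continuity of orbits. I would then prove the convergence claim by an $\varepsilon/3$ argument: approximate $f$ by convex combinations of translates (Ryll-Nardzewski), and use the Følner property to show that averages are asymptotically translation invariant.
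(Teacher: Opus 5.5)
The paper gives no proof of this statement. It is quoted from Barthmann--Farhangi--Kuznetsova and used only for the unitary representation on $L^2(\CM,\tau)$. There, admissibility is taken from Eisner et al., and the vanishing of the averages on $\CH_{wm}$ is re-derived in Theorem~\ref{thm:weak-mixing-Reiter}: one writes $|\langle\pi(g)\xi,\eta\rangle|^2$ as a coefficient of $\pi\otimes\overline{\pi}$, applies the mean ergodic theorem, and checks that $\xi\otimes\overline{\xi}$ is orthogonal to the $G$-invariant Hilbert--Schmidt operators, whose ranges lie in $\CH_c$.

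Your route is correct and genuinely different. You take the weak operator closure $\CS$ and the kernel group $\CK(\CS)$ with identity $P$, use Ellis and de Leeuw--Glicksberg to get $PE=E_c$, and then realize the unique invariant mean on $\mathrm{WAP}(G)$ as Haar integration over $\CK(\CS)$. This buys three things:
\begin{itemize}
\item It handles the general Banach-space case, where no tensor-square trick exists; you control the modulus because $\mathrm{WAP}(G)$ is a $C^*$-algebra, not by squaring.
\item It shows that amenability of $G$ enters only through the F\o lner averaging.
\item Since $M(f)$ lies in the norm-closed convex hull of both the left and the right translates of $f$, your $\varepsilon/3$ argument works verbatim for any left or right Reiter sequence. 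So it also yields Theorem~\ref{thm:weak-mixing-Reiter} in Banach-space generality.
\end{itemize}
The paper's Hilbert-space argument is more elementary, needing only the mean ergodic theorem and the spectral theorem for compact operators, but it is tied to unitary representations.

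Some points to tighten. In the admissibility step, ``right-amenable'' is not the right hypothesis. A probability measure on $\CS$ invariant under left multiplication only forces a unique minimal right ideal (and vice versa). But $\CK(\CS)$ is a group exactly when both the minimal left and the minimal right ideal are unique, so you need two-sided invariance:
\begin{itemize}
\item Apply Ryll-Nardzewski to the left and to the right translates of $f\in\mathrm{WAP}(G)$; the resulting constants coincide, giving a two-sided invariant mean.
\item Transfer it to a measure $\mu$ on $\CS$, and extend invariance from $\pi(G)$ to all $s\in\CS$ via the Grothendieck--Glicksberg continuity of $s\mapsto\int\tilde f(st)\,d\mu(t)$.
\item Then $\mu(s\CS)=\mu(\CS s)=1$ for every $s$, which gives uniqueness on both sides.
\end{itemize}
Alternatively, factor through $G^{\mathrm{wap}}$, whose kernel is a compact group. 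Amenability of $G$ plays no role in this step.

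The step you single out as the main obstacle is in fact routine; your own $\varepsilon/3$ argument settles it. The substantive inputs are the admissibility above and the coincidence of the weak and strong operator topologies on $\CK(\CS)|_{PE}$. Finally, say explicitly that $\int_{\CK(\CS)}\tilde f\,dk>0$ for $P\xi\neq 0$ uses that Haar measure has full support.
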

$E_c$ is referred to as the \emph{ compact } part and  $ E_{\mathrm{wm}}$ is refered to as the \emph{weak mixing} part.
 When  $E=\CH$, a Hilbert space, and $\pi$ to be a unitary representation of $G$ on $\CH$. Since $\| \pi(g) \|=1$ for all $g \in G$, and the closed unit ball of $\CB(\CH)$ under WOT is compact, and the representation is JdLG admissible by  \cite[Example 16.25, Chapter 16]{Eisner2015}. Moreover, we have $ \CH= \CH_c \oplus \CH_{\text{wm}}$.\\
 We recall the following definition:
 \begin{defn}
     Let $(X,d)$ be a metric space. A set $\CB \subset X$ is totally bounded if, for all $\epsilon >0$, there exists a finite set $\CF(\epsilon,\CB)=\{x_{1},x_2...,x_n\} \subset X$ such that for all $b \in \CB$, there exists $x_i \in \CF(\epsilon,\CB)$ such that $d(x_{i},b)<\epsilon.$
 \end{defn}
 Therefore,  one may characterise the compact part in terms of totally boundedness. We will also use the following characterisation of compactness:
 \begin{prop}\label{cptchar2}
Let $(X,d)$ be a metric space. Let $A \subset X$. Then, $\overline{A} \subset X$ is compact iff every sequence $(x_{n})_{n\in\mathbb{N}}$ in $A$ admits a convergent subsequence $(x_{n_k})_{k\in \mathbb{N}}$.
 \end{prop}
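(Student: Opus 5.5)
We prove the two implications separately, using only the definitions of closure and compactness in a metric space.

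For the forward direction, assume $\overline{A}$ is compact and let $(x_n)_{n\in\mathbb N}$ be a sequence in $A$. Since $A \subset \overline{A}$, this is a sequence in the compact metric space $\overline{A}$. Compact metric spaces are sequentially compact, so there is a subsequence $(x_{n_k})$ converging to some $x \in \overline{A} \subset X$. This is the required convergent subsequence; note that the limit lies in $X$ but need not lie in $A$.

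For the converse, assume every sequence in $A$ has a subsequence converging in $X$. We show that $\overline{A}$ is sequentially compact, which for metric spaces is equivalent to compactness. Let $(y_n)$ be a sequence in $\overline{A}$. For each $n$, since $y_n$ lies in the closure of $A$, choose $x_n \in A$ with $d(x_n,y_n)<1/n$. By hypothesis there is a subsequence $x_{n_k} \to x$ for some $x \in X$. Since each $x_{n_k} \in A$, the limit satisfies $x \in \overline{A}$. Moreover,
\[
d(y_{n_k},x) \le d(y_{n_k},x_{n_k}) + d(x_{n_k},x) < \frac{1}{n_k} + d(x_{n_k},x) \longrightarrow 0 .
\]
Hence $y_{n_k} \to x \in \overline{A}$. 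So every sequence in $\overline{A}$ has a subsequence converging in $\overline{A}$, and $\overline{A}$ is compact.

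The only subtle step is this diagonal approximation in the converse. It is needed because the hypothesis only concerns sequences in $A$, while compactness of $\overline{A}$ concerns sequences in $\overline{A}$. The approximation $d(x_n,y_n)<1/n$ transfers the convergent subsequence from $A$ to $\overline{A}$, and the fact that $\overline{A}$ is closed keeps the limit inside $\overline{A}$. No completeness of $X$ is used.

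In the paper, this proposition will be applied with $X=\CH$ (or $L^2(\CM,\tau)$) and $A=\{U_g\xi : g\in G\}$. There it identifies the compact part $\CH_c$ through sequential extraction from orbits, which is equivalent to total boundedness of the orbit.
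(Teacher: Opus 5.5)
Your proof is correct. The forward direction is sequential compactness of the compact metric space $\overline{A}$, with the limit allowed to lie in $\overline{A}\setminus A$. In the converse, choosing $x_n\in A$ with $d(x_n,y_n)<1/n$ transfers subsequential limits from $A$ to $\overline{A}$, and sequential compactness then gives compactness.

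The paper gives no proof of this proposition; it recalls it as a standard fact, so your argument supplies the standard justification. One small caveat concerns your closing aside. Identifying this sequential condition with total boundedness of the orbit does use completeness of $\CH$, which holds there, even though the proposition itself needs no completeness.
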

 Let $\CM$ be a finite von Neumann algebra equipped with a faithful normal tracial state $\tau$ and  $G$ be a l.c.s.c amenable group with left Haar measure $m$ and a F\o lner sequence $(F_n)_{n \in \mathbb{N}}$. Let $(\CM, G, \alpha)$ be a $\tau$-preserving noncommutative dynamical system.

Let $L^2(\CM,\tau)$ be  the GNS Hilbert space associated with $(\CM,\tau)$. 
Suppose $\widehat{1}$ denotes the cyclic and separating vector associated with $\tau.$
Consider the family of unitaries  $ \{ u_g : g \in G \}$ in $ L^2(\CM, \tau)$ defined by 
$$ u_g \widehat{ x } = \widehat{\alpha_g(x)}, ~\text{ for all } g \in G.$$
We note that $u_g$ can be thought as an extension of  the $*$-automorphism $\alpha_g$ of $\CM$ to $ L^2(\CM, \tau)$. Thus, with mild abuse of notation, we sometime use the same notation $ \alpha_g$ for the extension to $L^2(\CM, \tau)$.
We apply the JdLG decomposition  to the family $ \{ u_g: ~g \in G\} \subset \CU(L^{2}(\CM, \tau))$ and write $ L^2_c(\CM, \tau)$  and $L^2_{ \text{wm} }(\CM, \tau)$ for the compact and weak mixing part respectively. Now we like to obtain a JdLG decomposition for 
 $ (\CM,  G, \alpha)$. Consider the following sets;
\begin{align*}
    \CM_{c} &= \{a \in \CM:a \widehat{1}\in L^{2}_{c}(\CM, \tau)\}, \text{ is a von Neumann subalgebra, and,}\\
\CM_{wm}&=\biggl\{a \in \CM:\lim_{n \rightarrow \infty}\frac{1}{m(F_{n})}\int_{F_{n}}\abs{\tau(b\alpha_{g}(a))}dm(g)=0,~\text{for all}~ b\in \CM \biggr\}.
\end{align*}
We wish to prove the following theorem, which is a version of the JdLG decomposition for finite von Neumann algebras. 
\begin{thm}\label{JdLG}
Let $ \CM$ be a finite von Neumann algebra with a f.n trace $\tau$ and $ (\CM, G, \alpha) $ be a $\tau$-preserving noncommutative dynamical system. Then $\CM$ can be written as 
$\CM=\CM_{c}\oplus \CM_{wm}$ such that 
\[ \overline{ \CM \cap L^{2}_{c}(\CM,\tau)} = L_{c}^{2}(\CM,\tau)    ~\text{ and }~  \overline{ \CM \cap L^{2}_{\text{wm}}(\CM,\tau) } ~=  L^{2}_{\text{wm}}(\CM,\tau) .\]
\end{thm}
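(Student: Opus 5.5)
Throughout we identify $a\in\CM$ with $a\widehat{1}=\widehat{a}\in L^2(\CM,\tau)$; note $u_g\widehat a=\widehat{\alpha_g(a)}$. The plan is to use the Hilbert space decomposition $L^2(\CM,\tau)=L^2_c\oplus L^2_{wm}$ (Theorem \ref{JdlG_decomposition} with \cite[Example 16.25]{Eisner2015}) and show that the orthogonal projection $P$ onto $L^2_c$ maps $\widehat{\CM}$ into $\widehat{\CM}$. Indeed, the key structural input is that $L^2_c$ is the closed span of finite-dimensional $u$-invariant subspaces, and that $P$ is a conditional-expectation-type map.

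\textbf{Step 1: $\CM_c$ is a von Neumann subalgebra.} Linearity is clear. For adjoints, use the antiunitary $J\widehat a=\widehat{a^*}$, which commutes with every $u_g$ because $\alpha_g$ is a $*$-map; $J$ is isometric, so it preserves total boundedness of orbits. For products, if $a,b\in\CM_c$ then $\alpha_g(ab)\widehat 1=\alpha_g(a)\alpha_g(b)\widehat1$, and
\[
\norm{\alpha_g(a)\alpha_g(b)-a_ib_j}_2\le \norm{a}\norm{\alpha_g(b)-b_j}_2+\norm{(\alpha_g(a)-a_i)b_j}_2 ,
\]
where $\{\widehat{a_i}\},\{\widehat{b_j}\}$ are finite $\epsilon$-nets of the orbits. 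A priori the $a_i,b_j$ need not be bounded, so first choose the nets inside the orbits themselves (which is allowed for totally bounded sets, at the cost of $2\epsilon$). Then $\norm{a_i}\le\norm a$ and $\norm{b_j}\le \norm b$, and the trace property gives $\norm{xy}_2\le\norm{x}_2\norm{y}$. Hence the orbit of $ab$ is totally bounded. For weak closedness, take a bounded net $a_\lambda\to a$ strongly (Kaplansky). Then $\widehat{a_\lambda}\to\widehat a$ in $L^2$, and $L^2_c$ is closed, so $a\in\CM_c$.

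\textbf{Step 2: $P$ maps $\CM$ into $\CM_c$.} $\CM_c$ is a $G$-invariant von Neumann subalgebra, so there is a $\tau$-preserving conditional expectation $E:\CM\to\CM_c$, and its $L^2$ extension is the projection onto $\overline{\widehat{\CM_c}}$. It therefore suffices to show $\overline{\widehat{\CM_c}}=L^2_c$, the main obstacle. The inclusion $\subseteq$ is clear. For the converse, $L^2_c$ is spanned by vectors $\xi$ lying in finite-dimensional invariant subspaces (eigenfunctions of the compact part). For such a $\xi$, approximate it by bounded elements while preserving compactness. First try the following: apply spectral truncation to $\xi=\widehat{x}$, $x\in L^2$, setting $x_N=x\,\chi_{[0,N]}(|x|)$. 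The difficulty is that truncation need not commute with $\alpha_g$; however, $\alpha_g(x)$ ranges over a compact set of $L^2$, and $f_N(|\alpha_g x|)=\alpha_g(f_N(|x|))$ by the functional calculus identity established in Section 2, so $\alpha_g(x_N)=\alpha_g(x)\,\alpha_g(\chi_{[0,N]}(|x|))$. Total boundedness of this orbit then follows from the continuity of $y\mapsto y\chi_{[0,N]}(|y|)$ on the compact orbit set. Continuity in $L^2$ is delicate, so as a fallback use the smooth cutoff $y\mapsto y(1+|y|^2/N)^{-1}$, which is Lipschitz in $L^2$ and bounded by $\sqrt N$. This gives $\widehat{x_N}\in\widehat{\CM_c}$ with $x_N\to x$.

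\textbf{Step 3: conclude.} Write $a=E(a)+(a-E(a))$ with $E(a)\in\CM_c$, and note $\widehat{a-E(a)}=(1-P)\widehat a\in L^2_{wm}$. Since $|\tau(b\alpha_g(c))|=|\langle u_g\widehat c,\widehat{b^*}\rangle|$, membership in $L^2_{wm}$ is exactly the defining condition of $\CM_{wm}$. Hence $\CM_{wm}=\{a:\widehat a\in L^2_{wm}\}$, and the sum $\CM=\CM_c\oplus\CM_{wm}$ is direct (and $L^2$-orthogonal). For the closure statements: $L^2_c=\overline{\widehat{\CM_c}}$ by Step 2. Also $(1-P)\widehat{\CM}=\widehat{\CM_{wm}}$ is dense in $(1-P)L^2=L^2_{wm}$, because $\widehat\CM$ is dense in $L^2$ and $1-P$ is continuous.
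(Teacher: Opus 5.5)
Your proposal is correct, and its architecture matches the paper's. You prove $\CM_c$ is a von Neumann subalgebra, prove $\widehat{\CM_c}$ is dense in $L^2_c(\CM,\tau)$, and then let the $\tau$-preserving conditional expectation $\CE_c$ (whose $L^2$-extension is the projection onto $L^2_c$) implement the splitting; density in $L^2_{wm}$ comes from applying $1-P$ to $\widehat{\CM}$. Your Step 1 differs only cosmetically: you choose nets inside the orbits and use the antiunitary $J$, where the paper uses Cauchy subsequences and bounded approximation.

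The genuine difference is the density step. The paper uses the Powers--St{\o}rmer inequality to show $x\mapsto|x|$ is $L^2$-continuous and equivariant. Hence $L^2_c$ is stable under $|\cdot|$, so under $\max/\min$ and the piecewise-linear $f_n$. This puts spectral projections of self-adjoint elements of $L^2_c$ into $\CM_c$, and spectral truncation of the positive and negative parts gives the approximants. You instead use the single equivariant cutoff $\Phi_N(y)=y(1+y^*y/N)^{-1}$. It maps $L^2(\CM,\tau)$ into the ball of radius $\sqrt N/2$ of $\CM$ and converges to the identity in $L^2$. So $\Phi_N$ sends $L^2_c$ into $\CM_c$ at once, with no reduction to self-adjoint or positive parts and no spectral calculus of unbounded operators. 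The paper's route needs only a citable inequality but several reductions; yours is shorter.

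However, your route rests entirely on the $L^2$-Lipschitz property of $\Phi_N$, which you assert without proof. It is true and elementary. With $R_y=(1+y^*y)^{-1}$,
\[
\Phi_1(y)-\Phi_1(z)=(y-z)R_z+yR_y\,(z-y)^*\,zR_z+yR_yy^*\,(z-y)\,R_z .
\]
The bounds $\norm{yR_y},\norm{zR_z}\le\tfrac12$, $\norm{yR_yy^*}=\norm{yy^*(1+yy^*)^{-1}}\le 1$ and $\norm{R_z}\le 1$ give $\norm{\Phi_1(y)-\Phi_1(z)}_2\le\tfrac94\norm{y-z}_2$ for all $y,z\in L^2(\CM,\tau)$. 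Since $\Phi_N(y)=\sqrt N\,\Phi_1(y/\sqrt N)$, the same constant works for every $N$.

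A few further corrections:
\begin{itemize}
\item Derive the equivariance $u_g\Phi_N(x)=\Phi_N(u_gx)$ for unbounded $x\in L^2$ from this Lipschitz bound and the density of $\CM$. The identity from Section 2 covers only bounded $x$.
\item Delete the hard-truncation attempt entirely. The map $y\mapsto y\chi_{[0,N]}(|y|)$ is genuinely discontinuous in $L^2$, not merely delicate: take the constants $N+1/k\to N$.
\item In Step 3, state explicitly how you pass from test vectors $\widehat{b^*}$, $b\in\CM$, to arbitrary $\eta\in L^2$ in the weak-mixing criterion. It is a one-line density and Cauchy--Schwarz argument.
\end{itemize}
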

\begin{rem}
Since the definition of the compact part $\CM_{c}$ is not dependent on the F{\o}lner sequence, the definition of the weak mixing part $\CM_{wm}$ is also not dependent on the F{\o}lner sequence, by the uniqueness of the orthogonal complement of the closed Hilbert subspace $L_{c}^{2}(\CM, \tau)$.
\end{rem}
We prove the above Theorem \ref{JdLG} through the sequence of results.  We start with the following result.
\begin{lem}\label{xstarcpt} 
Let $\CM$ be finite von Neumann algebra with a f.n trace $\tau$ and $ (\CM, G, \alpha)$ be $\tau$-preserving noncommutative dynamical system. Suppose  
 $x \in L^{2}_{c}(\CM, \tau)$, then $x^{*}\in L^{2}_{c}(\CM, \tau)$.
\end{lem}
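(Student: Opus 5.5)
The plan is to use the adjoint map on $L^2(\CM,\tau)$. Let $J\colon L^2(\CM,\tau)\to L^2(\CM,\tau)$ be the conjugate-linear extension of $\widehat{a}\mapsto\widehat{a^*}$ for $a\in\CM$. Because $\tau$ is a trace,
\[
\|\widehat{a^*}\|_2^2=\tau(aa^*)=\tau(a^*a)=\|\widehat{a}\|_2^2 ,
\]
so $a\mapsto a^*$ is isometric on $\CM\subset L^2(\CM,\tau)$. It therefore extends to a conjugate-linear isometric involution $J$ of $L^2(\CM,\tau)$. For $x\in L^2(\CM,\tau)$, viewed as a $\tau$-measurable operator, $Jx=x^*$. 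This identification follows from density of $\CM$ in $L^2$ (Theorem~\ref{Density_thm}) together with continuity of the adjoint in the measure topology, or equivalently from $\||x|\|_2=\||x^*|\|_2$.

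Next, check that $J$ commutes with the action: $u_gJ=Ju_g$ for all $g\in G$. On the dense subspace $\widehat{\CM}$ this holds because $\alpha_g$ is a $*$-automorphism:
\[
u_g J\widehat{a}=\widehat{\alpha_g(a^*)}=\widehat{\alpha_g(a)^*}=Ju_g\widehat{a}.
\]
Both sides are continuous, being compositions of isometries, so the identity extends to all of $L^2(\CM,\tau)$.

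With these two facts the lemma follows. Take $x\in L^2_c(\CM,\tau)$, so the orbit $\{u_g x:g\in G\}$ is totally bounded. Then
\[
\{u_g x^*:g\in G\}=\{Ju_gx:g\in G\}=J\bigl(\{u_gx:g\in G\}\bigr).
\]
An isometry sends an $\epsilon$-net $\{x_1,\dots,x_n\}$ of a set to the $\epsilon$-net $\{Jx_1,\dots,Jx_n\}$ of its image; conjugate-linearity does not matter for this metric statement. Hence the orbit of $x^*$ is totally bounded, and $x^*\in L^2_c(\CM,\tau)$. One could equally use Proposition~\ref{cptchar2}: any sequence $u_{g_n}x^*=Ju_{g_n}x$ has a convergent subsequence, because $(u_{g_n}x)$ does and $J$ is continuous.

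The only step that needs real care is the first one, namely justifying that $J$ is well defined and isometric on all of $L^2$ and agrees with the operator adjoint there. This is where the trace property is essential: for a non-tracial state, $a\mapsto a^*$ is not isometric in the GNS norm and one would need the modular theory. If identifying $J$ with the operator adjoint on unbounded elements is awkward, I would avoid it. Instead I would define $x^*$ for $x\in L^2$ as $Jx$ and note that this agrees with the adjoint on $\CM$, which is all that is used later for $\CM_c$.
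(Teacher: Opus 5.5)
Your proposal is correct and follows essentially the same route as the paper. The paper also uses that the adjoint is isometric in $\|\cdot\|_2$, proves $\alpha_g(x^*)=(\alpha_g(x))^*$ on $L^2(\CM,\tau)$ by approximating from $\CM$, and then turns a convergent subsequence $\alpha_{g_{n_k}}(x)\to a$ into $\alpha_{g_{n_k}}(x^*)\to a^*$ via the sequential compactness criterion. Packaging these facts into a conjugate-linear isometry $J$ that commutes with $u_g$ is a cleaner formulation of the same argument.
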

\begin{proof}
Let $(g_{n})_{n\in \mathbb{N}}$ be a sequence, then there exists a subsequence $(n_k)_{k\in \mathbb{N}}$ and $a\in L^{2}(\mathcal{M},\tau)$ such that 
\begin{align*}
    \lim_{k \rightarrow \infty}\|\alpha_{g_{n_k}}(x)-a\|_{2}=0.
\end{align*}
By a property of noncommutative $L^{2}$-spaces, we know that $\|b^{*}\|_{2}=\|b\|_{2}$ for all $b \in L^{2}(\mathcal{M},\tau)$.\\
We note that $\alpha_{g}(x^{*})=(\alpha_{g}(x))^{*}$ for all $g \in G \text{ and } x\in L^{2}(\CM,\tau)$. Indeed, given $x \in L^{2}(\mathcal{M},\tau)$ and $\epsilon > 0$, there exists $b \in \CM$ such that $\|x-b\|_{2}<\epsilon$, and since $\alpha_{g}$ is a contraction for all $g \in G$, we have that $\|\alpha_g(x)-\alpha_{g}(b)\|_{2}<\epsilon$ for all $g 
\in G$. Since $b \in \CM,\alpha_{g}(b^{*})=(\alpha_{g}(b))^{*}$ for all $g \in G$, and 
\begin{align*}
    \|\alpha_{g}(x^{*})-(\alpha_{g}(x))^{*}\|_{2}&=\|\alpha_{g}(x^{*})-(\alpha_{g}(x))^{*}+\alpha_{g}(b^{*})-(\alpha_{g}(b))^{*}\|_{2}\\
    &\leq\|\alpha_{g}(x^{*})-\alpha_{g}(b^{*})\|_{2}+\|(\alpha_{g}(x))^{*}-(\alpha_{g}(b))^{*}\|_{2}\\
    &\leq \frac{\epsilon}{2}+\frac{\epsilon}{2}=\epsilon.
    \end{align*}
 Therefore, we obtain the following:
\begin{align*}
    \lim_{k \rightarrow \infty}\|\alpha_{g_{n_k}}(x^{*})-a^{*}\|_{2}=0.
\end{align*}
Therefore, $x^{*}$ is also in $L^{2}_{c}(\CM, \tau)$. 
 
\end{proof}

\noindent We note that the above decomposition is merely a vector space direct sum. However, the compact part of the decomposition has more structure than that of a vector space.
\begin{lem}
$\CM_{c}$ is a von Neumann subalgebra of $\CM$.
\end{lem}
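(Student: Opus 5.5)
We must show that $\CM_c$ is a unital $*$-subalgebra of $\CM$ that is closed in the weak (equivalently strong) operator topology. We will work throughout with the characterisation of the compact part $L^2_c(\CM,\tau)$ as the set of vectors whose orbit $\{u_g\xi : g\in G\}$ is totally bounded in $\|\cdot\|_2$. We will also use freely that $\|\alpha_g(a)\|_\infty=\|a\|_\infty$ and $\alpha_g(a)\widehat{1}=u_g(a\widehat 1)$.

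\textbf{Step 1: the algebraic structure.}
\begin{itemize}
\item Linearity: $\CM_c$ is a linear subspace, because $L^2_c(\CM,\tau)$ is a closed subspace.
\item Unit: $1\in\CM_c$, since $\alpha_g(1)=1$ gives a one-point orbit.
\item Adjoints: closure under $*$ is exactly Lemma \ref{xstarcpt}, using $a^*\widehat1=(a\widehat 1)^*$ in $L^2$.
\item Products: for $a,b\in\CM_c$ and $\epsilon>0$, choose finite $\epsilon$-nets $\{\alpha_{g_i}(a)\}$ and $\{\alpha_{h_j}(b)\}$ for the two orbits in $\|\cdot\|_2$. For any $g$ pick $i,j$ and estimate
\[
\|\alpha_g(ab)-\alpha_{g_i}(a)\alpha_{h_j}(b)\|_2\le \|\alpha_g(a)\|_\infty\|\alpha_g(b)-\alpha_{h_j}(b)\|_2+\|(\alpha_g(a)-\alpha_{g_i}(a))\alpha_{h_j}(b)\|_2 .
\]
The first term is at most $\|a\|_\infty\epsilon$. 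For the second term we use the trace property: $\|xy\|_2=\|y^*x^*\|_2\le\|y\|_\infty\|x^*\|_2=\|y\|_\infty\|x\|_2$, so it is at most $\|b\|_\infty\epsilon$. Hence the finitely many products $\alpha_{g_i}(a)\alpha_{h_j}(b)$ form a $(\|a\|+\|b\|)\epsilon$-net for the orbit of $ab$, and $ab\in\CM_c$.
\end{itemize}

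\textbf{Step 2: weak closure.} This is the main obstacle, since $\|\cdot\|_2$-limits of elements of $\CM_c$ need not stay bounded a priori. The plan is to use Kaplansky density together with the fact that on bounded sets the strong operator topology agrees with the $\|\cdot\|_2$-topology, because $\widehat1$ is cyclic and separating and $\tau$ is a finite trace.

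Let $a$ lie in the strong closure of $\CM_c$. By Kaplansky's density theorem applied to the $*$-algebra $\CM_c$, there is a net (a sequence suffices by separability) $a_k\in\CM_c$ with $\|a_k\|\le\|a\|$ and $a_k\to a$ strongly. In particular $\|a_k\widehat 1-a\widehat1\|_2\to0$.

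Since $L^2_c(\CM,\tau)$ is a closed subspace containing every $a_k\widehat 1$, we get $a\widehat 1\in L^2_c(\CM,\tau)$, and therefore $a\in \CM_c$. Thus $\CM_c$ is strongly closed, and being a unital $*$-algebra it is weakly closed by the bicommutant theorem.

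It is worth noting that the boundedness supplied by Kaplansky is not even strictly needed here. What is really used is that strong convergence implies convergence at the vector $\widehat1$, and that $L^2_c(\CM,\tau)$ is norm closed. The total-boundedness argument alone would need uniform approximation: if $\|a-a_k\|_2<\epsilon$, then $\|\alpha_g(a)-\alpha_g(a_k)\|_2<\epsilon$ for all $g$, and a finite net for the orbit of $a_k$ becomes a $2\epsilon$-net for the orbit of $a$. This recovers the closedness directly and confirms the conclusion.
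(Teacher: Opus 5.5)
Your proof is correct and follows essentially the same route as the paper: adjoints via Lemma \ref{xstarcpt}, products via the bounds $\|xy\|_2\le\|x\|_\infty\|y\|_2$ and $\|xy\|_2\le\|x\|_2\|y\|_\infty$ applied to orbit elements, and strong closure because $\|\cdot\|_2$-closedness of $L^2_c(\CM,\tau)$ can be tested at $\widehat 1$. The differences are cosmetic: you use finite $\epsilon$-nets (total boundedness) where the paper extracts Cauchy subsequences, and your Kaplansky step is unnecessary, as you observe yourself.
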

\begin{proof}
Let $a,b \in \CM_{c}$.
Given the sequence $(g_{n})_{n\in \mathbb{N}}$, and $a \in \CM_c$, there exists a  subsequence $(g_{n_{r}})_{r \in \mathbb{N}}$ such that $u_{g_{n_r}}(\widehat{a})$ is Cauchy. Given the subsequence $(g_{n_{r}})_{r \in \mathbb{N}}$, there exists a further subsequence $(g_{n_{r_{l}}})_{l \in \mathbb{N}}$ such that $u_{g_{n_{r_{l}}}}(\widehat{b})$ is Cauchy. Since every subsequence of a Cauchy subsequence is also Cauchy, we have that $u_{g_{n_{r_{l}}}}(\widehat{a})$ is also Cauchy. We call this subsequence $(g_{n_{k}})_{k \in \mathbb{N}}$.

\noindent Choosing this new subsequence, we obtain that $u_{g_{n_{k_{l}}}}(\widehat{a})+u_{g_{n_{k_{l}}}}(\widehat{b})=u_{g_{n_{k_{l}}}}(\widehat{a+b})$ is Cauchy in $L^{2}$ norm, and consequently, that $a+b \in \CM_{c}$. \\
In order to show that $ab \in \CM_{c}$, we need to show that $(\alpha_{g_{n_{k}}}(\widehat{ab}))_{k \in \mathbb{N}}$ is Cauchy. Let $k,k' \in \mathbb{N}$. Then,
\begin{align*}
 &\norm{\alpha_{g_{n_{k}}}(a)\alpha_{g_{n_{k}}}(b)-\alpha_{g_{n_{k'}}}(a)\alpha_{g_{n_{k'}}}(b)}_2\\=&\norm{(\alpha_{g_{n_{k}}}(a)-\alpha_{g_{n_{k'}}}(a))\alpha_{g_{n_{k}}}(b) +(\alpha_{g_{n_{k'}}}(a))(\alpha_{g_{n_{k}}}(b)-\alpha_{g_{n_{k'}}}(b))}_2  \\
 \leq&\norm{(\alpha_{g_{n_{k}}}(a)-\alpha_{g_{n_{k'}}}(a))\alpha_{g_{n_{k}}}(b)}_{2} +\norm{(\alpha_{g_{n_{k'}}}(a))(\alpha_{g_{n_{k}}}(b)-\alpha_{g_{n_{k'}}}(b))}_2\\
  \leq &\max\{{\|a\|,\|b\|\}} \bigl(\norm{(\alpha_{g_{n_{k}}}(a)-\alpha_{g_{n_{k'}}}(a))}_2 + \norm{(\alpha_{g_{n_{k}}}(b)-\alpha_{g_{n_{k'}}}(b))}_{2}\bigr), ~~\text{(Since $a,b \in \CM)$}.
\end{align*}
As $k,k' \rightarrow \infty$, we get the desired result.\\
By Lemma \ref{xstarcpt}, $\CM_c$ is closed under taking adjoints.
Now we show that it is closed in the SOT. Let $a \in \overline{\{m \in \CM:m\widehat{1}\in L_{c}^{2}(\mathcal{M},\tau)\}}^{\text{SOT}},$ then for  given $\epsilon > 0$ and $\widehat{1} \in L^{2}(\CM, \tau)$, there exists $b_\epsilon \in \{a \in \CM:a\widehat{1} \in L_{c}^{2}(\CM,\tau)\}$ such that
\begin{align*}
\|(a-b_\epsilon) \widehat{1}\|_{2}<\frac{\epsilon}{2}.
\end{align*}
Since $ \epsilon$ is arbitrary, hence, $ a\widehat{1} \in \overline{ L_c^2(\CM, \tau)}^{ \norm{\cdot}_2}.$ But  $ L_c^2(\CM, \tau)$ is a closed set, so, $  a\widehat{1}  \in L^2(\CM, \tau)$. Consequently it implies $ a \in \CM_c$.
\end{proof}

\noindent We recall the Powers-Str{\o}mer inequality from \cite{AnantharamanPopaII1} in order to prove the next lemma:
\begin{prop}[Powers--Størmer inequality]\label{PowerStormer} 
Let $\CM$ be a finite von Neumann algebra with a f.n trace $\tau$. 
Let $x, y \in L^{2}(\CM, \tau)_{+}$, then,
\begin{align*}
    \|x-y\|_2^{2} \leq \|x^{2}-y^{2}\|_1\leq\|x-y\|_2\|x+y\|_{2}.
\end{align*}
Consequently, it follows that for $x, y \in L^{2}(\CM,\tau)$
\begin{align*}
    \||x|-|y|\|_2^{2}\leq2max\{\|x\|_{2},\|y\|_{2}\}\|x-y\|_2.
\end{align*}
\end{prop}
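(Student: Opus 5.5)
We first prove the inequality for positive $x,y \in \CM$ and then extend it to $L^{2}(\CM,\tau)_+$ by density, using Theorem \ref{Density_thm} together with continuity of both sides. The consequence for arbitrary $x,y$ then follows by applying the main inequality to $|x|$ and $|y|$.

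\textbf{First inequality.} We use the spectral decomposition of $x-y=p_+(x-y)p_+ - p_-(y-x)p_-$, where $p_\pm$ are the spectral projections of $x-y$ onto $[0,\infty)$ and $(-\infty,0)$; write $q=p_+-p_-$, so that $q(x-y)=|x-y|$. We then write
\[
x^2-y^2=\tfrac12\bigl((x-y)(x+y)+(x+y)(x-y)\bigr).
\]
Since $\|q\|\le 1$, we get $\|x^2-y^2\|_1\ge \mathrm{Re}\,\tau\bigl(q(x^2-y^2)\bigr)$. Using the displayed identity and traciality, this equals
\[
\tfrac12\tau\bigl(q(x-y)(x+y)+(x+y)(x-y)q\bigr)=\tau\bigl(|x-y|(x+y)\bigr),
\]
because $q$ commutes with $x-y$. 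We then need $\tau(|x-y|(x+y))\ge \tau(|x-y|^2)=\|x-y\|_2^2$. To see this, split $|x-y| = p_+(x-y)p_+ + p_-(y-x)p_-$ and note
\[
\tau\bigl(p_+(x-y)p_+(x+y)\bigr)\ge \tau\bigl(p_+(x-y)p_+(x-y)\bigr),
\]
since the difference is $2\tau(p_+(x-y)p_+\,y)=2\tau\bigl(y^{1/2}p_+(x-y)p_+y^{1/2}\bigr)\ge0$. The analogous statement holds on $p_-$ using $x$ instead of $y$. Summing the two parts gives the first inequality.

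\textbf{Second inequality and extension.} The second inequality is Hölder's: $\|x^2-y^2\|_1=\|\tfrac12((x-y)(x+y)+(x+y)(x-y))\|_1\le\|x-y\|_2\|x+y\|_2$. For the extension, all terms are continuous on bounded parts. One subtlety is that $x^2$ for $x\in L^2$ lies in $L^1$, so the middle term makes sense; we approximate by spectral truncations $x\wedge n$, for which the norms converge monotonically.

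\textbf{Consequence.} Set $X=|x|$ and $Y=|y|$. The first inequality gives
\[
\|X-Y\|_2^2\le\bigl\||x|^2-|y|^2\bigr\|_1=\|x^*x-y^*y\|_1 .
\]
Writing $x^*x-y^*y=x^*(x-y)+(x-y)^*y$ and applying Hölder yields the bound $(\|x\|_2+\|y\|_2)\|x-y\|_2\le 2\max\{\|x\|_2,\|y\|_2\}\|x-y\|_2$.

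The main obstacle is the first inequality's positivity step: choosing the right partial isometry (the sign $q$ of $x-y$) and verifying the cross-term positivity. The rest is routine.
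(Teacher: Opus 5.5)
Your argument is correct. The paper gives no proof of its own and simply quotes the inequality from Anantharaman--Popa, where it is proved by essentially this argument: the sign projection $q$ of $x-y$ yields $\|x^2-y^2\|_1\ge\tau(q(x^2-y^2))=\tau(|x-y|(x+y))\ge\|x-y\|_2^2$, and H\"older gives the upper bound. Your truncation to $x\wedge n$, $y\wedge n$ legitimately passes from $\CM_+$ to $L^2(\CM,\tau)_+$, since $x_n\to x$ in $\|\cdot\|_2$ and $x_n^2\to x^2$ in $\|\cdot\|_1$, and your derivation of the consequence (applying the first inequality to $|x|,|y|$ and bounding $\|x^*x-y^*y\|_1$ via $x^*x-y^*y=x^*(x-y)+(x-y)^*y$) is also correct.
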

If $b \in \CM\cap L^{1}(\CM,\tau)$, we know that $\alpha_{g}(|b|)=|\alpha_{g}(b)|$ for all $g \in G$. We note, by the following corollary, that the same is true when $\alpha_{g}$ is extended to $L^{2}(\CM,\tau)$ as a unitary for all $g \in G$. 
\begin{cor}\label{mod_preserving}
 $\alpha_{g}(|b|)=|\alpha_{g}(b)|$ for all $b \in L^{2}(\mathcal{M},\tau)$.
\end{cor}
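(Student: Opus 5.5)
The plan is a density-and-continuity argument: verify the identity on the dense subspace $\CM \subset L^{2}(\CM,\tau)$ and pass to the limit using the Powers--St{\o}rmer inequality (Proposition \ref{PowerStormer}).

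First, I need the identity on $\CM$ itself. The preliminaries give $|\alpha_g(x)| = \alpha_g(|x|)$ for $x \in \CM \cap L^{1}(\CM,\tau)$. Since $\tau$ is a finite trace, $\CM \subset L^{1}(\CM,\tau)$, so the identity holds for every $b \in \CM$. In the $L^{2}$-picture this reads $u_g\widehat{|b|} = \widehat{|\alpha_g(b)|}$.

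Second, I approximate. Fix $b \in L^{2}(\CM,\tau)$ and choose $b_k \in \CM$ with $\|b_k - b\|_2 \to 0$, which is possible by Theorem \ref{Density_thm}. By the consequence of Proposition \ref{PowerStormer},
\[
\||b_k|-|b|\|_2^{2} \leq 2\max\{\|b_k\|_2,\|b\|_2\}\,\|b_k-b\|_2 \longrightarrow 0,
\]
because $\|b_k\|_2$ stays bounded. Since $\alpha_g$ acts as the unitary $u_g$ on $L^{2}$, hence isometrically,
\[
\|\alpha_g(|b_k|) - \alpha_g(|b|)\|_2 = \||b_k|-|b|\|_2 \to 0 .
\]
On the other side, $\|\alpha_g(b_k)-\alpha_g(b)\|_2 = \|b_k-b\|_2 \to 0$. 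Applying Powers--St{\o}rmer again, now to the pair $\alpha_g(b_k), \alpha_g(b)$, gives $\||\alpha_g(b_k)|-|\alpha_g(b)|\|_2 \to 0$.

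Third, I conclude. By the first step $\alpha_g(|b_k|) = |\alpha_g(b_k)|$ for every $k$. The left-hand side converges in $\|\cdot\|_2$ to $\alpha_g(|b|)$ and the right-hand side to $|\alpha_g(b)|$. Uniqueness of limits in $L^{2}(\CM,\tau)$ then yields $\alpha_g(|b|) = |\alpha_g(b)|$.

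The main point to check is consistency. The extension of $\alpha_g$ to $L^{2}(\CM,\tau)$ as the unitary $u_g$ must agree with the abstract density extension used in the preliminaries, so that the same operator appears on both sides. It must also be the case that $|b| \in L^{2}$ whenever $b \in L^{2}$, which holds since $\||b|\|_2 = \|b\|_2$. Both points are routine because the two extensions agree on the dense subspace $\CM$.
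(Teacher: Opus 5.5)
Your proposal is correct and follows essentially the same route as the paper. Both arguments establish the identity on $\CM$, use the Powers--St{\o}rmer inequality to get $L^{2}$-continuity of $x\mapsto |x|$, and pass to the limit along an approximating sequence from $\CM$ using that $\alpha_g$ is an $L^{2}$-isometry. Your remarks that $\CM\subset L^{1}(\CM,\tau)$ and that the two extensions of $\alpha_g$ agree are fine; they only make explicit what the paper uses tacitly.
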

\begin{proof}
First we claim  that the  map $x \mapsto |x|$, from $L^{2}(\mathcal{M},\tau)$ to itself, is continous in the $L^{2}$-norm.
Let $(x_n)_{n \in \mathbb{N}}$ be a sequence in $L^{2}(\CM,\tau)$ converging to $x \in L^{2}(\CM,\tau)$. We know that convergent sequences are bounded, so let $sup_{n\in \mathbb{N}}\|x_{n}\|_2=M$. Then, by Powers St{\o}rmer inequality, we have
\begin{align*}
    \||x_{n}|-|x|\|_2^{2}&\leq2max\{\|x_{n}\|_{2},\|x\|_{2}\}\|x_{n}-x\|_2\\
    &\leq2max\{M,\|x\|_{2}\}\|x_{n}-x\|_{2}.
\end{align*}
As $n \rightarrow\infty$, we prove the claim.\\
We know that $\alpha_{g}(|x|)=|\alpha_{g}(x)|$ holds for all $x \in \CM$. By the density of $\CM$ in $L^{2}(\mathcal{M},\tau)$, given $x \in L^{2}(\mathcal{M},\tau)$, we find a sequence $(y_{n})_{n \in \mathbb{N}} \subset \CM$ such that $y_{n}$ converges to $x$ in the $L^{2}$-norm. So, for all $g \in G$,
\begin{align*}
 &\|\alpha_g(|x|)-|\alpha_g(x)|\|_{2}\\&=\|\alpha_g(|x|)-\alpha_g(|y_{n}|)+\alpha_g(|y_n|)-|\alpha_g(x)|\|_2\\
&\leq \|\alpha_g(|x|)-\alpha_g(|y_{n}|)\|_2+\|\alpha_g(|y_n|)-|\alpha_g(x)|\|_2\\
&\leq \|\alpha_g(|x|)-\alpha_g(|y_{n}|)\|_2+\||\alpha_g(y_n)|-|\alpha_g(x)|\|_2, ~ (\text{as } y_n \in \CM \text{ and }   |\alpha_{g}(y_{n})|=\alpha_{g}(|y_{n}|)).
\end{align*}
By our claim, and the fact that for all $g \in G$, $\alpha_g$ is a contraction on $L^2(\mathcal{M},\tau)$, $y_{n} \rightarrow x$ as $n \rightarrow \infty$ in $L^{2}$-norm implies $|y_{n}| \rightarrow |x|$ and $|\alpha_g(y_{n})| \rightarrow |\alpha_g(x)|$ in $L^{2}$-norm as $n \rightarrow \infty$. Taking the limit $n \rightarrow \infty$ above, we prove the corollary.
\end{proof}
\begin{lem}\label{modcpt}
Suppose $x\in L_{c}^{2}(\mathcal{M},\tau)$, then it follows that $  |x| \in L_{c}^{2}(\mathcal{M},\tau)$. 
\end{lem}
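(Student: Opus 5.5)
We will use the sequential characterisation of compactness from Proposition \ref{cptchar2}: an element $y \in L^2(\CM,\tau)$ lies in $L^2_c(\CM,\tau)$ exactly when every sequence $(\alpha_{g_n}(y))_{n\in\N}$ in the orbit has an $\|\cdot\|_2$-convergent subsequence. This is the same criterion used in Lemma \ref{xstarcpt}. The idea is to transport convergence along the orbit of $x$ to convergence along the orbit of $|x|$, using Corollary \ref{mod_preserving} and the Powers--St{\o}rmer inequality (Proposition \ref{PowerStormer}).

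Fix a sequence $(g_n)_{n\in\N}$ in $G$. Since $x\in L^2_c(\CM,\tau)$, there is a subsequence $(g_{n_k})$ along which $\alpha_{g_{n_k}}(x)$ is Cauchy in $\|\cdot\|_2$. By Corollary \ref{mod_preserving}, $\alpha_{g}(|x|)=|\alpha_g(x)|$ for every $g\in G$. Hence it suffices to show that $(|\alpha_{g_{n_k}}(x)|)_k$ is Cauchy. For $k,k'$ we apply the second inequality of Proposition \ref{PowerStormer} to $y=\alpha_{g_{n_k}}(x)$ and $z=\alpha_{g_{n_{k'}}}(x)$:
\begin{align*}
\bigl\| |\alpha_{g_{n_k}}(x)| - |\alpha_{g_{n_{k'}}}(x)| \bigr\|_2^2 \leq 2\max\{\|\alpha_{g_{n_k}}(x)\|_2,\|\alpha_{g_{n_{k'}}}(x)\|_2\}\,\|\alpha_{g_{n_k}}(x)-\alpha_{g_{n_{k'}}}(x)\|_2 .
\end{align*}
Since each $\alpha_g$ is unitary (in particular isometric) on $L^2(\CM,\tau)$, the maximum on the right equals $\|x\|_2$. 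The right-hand side therefore tends to $0$ as $k,k'\to\infty$. So $(\alpha_{g_{n_k}}(|x|))_k$ is Cauchy and, by completeness of $L^2(\CM,\tau)$, convergent. As $(g_n)$ was arbitrary, the orbit of $|x|$ is relatively compact, i.e.\ $|x|\in L^2_c(\CM,\tau)$.

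The only genuinely delicate point is the identity $\alpha_g(|x|)=|\alpha_g(x)|$ for a general $x\in L^2(\CM,\tau)$ rather than $x\in\CM$. This is already supplied by Corollary \ref{mod_preserving}. Beyond that, the argument needs only the square-root form of the Powers--St{\o}rmer estimate, which makes the modulus map uniformly continuous on bounded subsets of $L^2$. Since orbits are bounded by $\|x\|_2$, no further obstacle is expected.
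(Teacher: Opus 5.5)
Your proposal is correct and matches the paper's own proof: you pass to a subsequence along which $\alpha_{g_{n_k}}(x)$ is $\|\cdot\|_2$-Cauchy, rewrite $\alpha_g(|x|)=|\alpha_g(x)|$ via Corollary \ref{mod_preserving}, and bound $\| \, |\alpha_{g_{n_k}}(x)|-|\alpha_{g_{n_l}}(x)| \, \|_2^2$ by $2\|x\|_2\|\alpha_{g_{n_k}}(x)-\alpha_{g_{n_l}}(x)\|_2$ using Proposition \ref{PowerStormer} and the isometry of $\alpha_g$. Your explicit use of completeness of $L^2(\CM,\tau)$, to turn the Cauchy subsequence into a convergent one, is a step the paper leaves implicit.
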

\begin{proof}
First note that as  $x \in L_{c}^{2}(\CM,\tau)$, given a sequence $(g_{n})_{n\in \mathbb{N}}$, there exists a subsequence $(g_{n_{k}})_{k\in \mathbb{N}}$ such that $\{\alpha_{g_{n_{k}}}(x)\}_{k\in \mathbb{N}}$  is a Cauchy sequence with respect to the $L^{2}$-norm.\\
Now we apply the  Powers-St\o rmer's inequality (Proposition \ref{PowerStormer}) to the elements  $\alpha_{g_{n_{k}}}(|x|)$ and $\alpha_{g_{n_{l}}}(|x|)$, where $k,l\in \mathbb{N}$. Since $\alpha_{g}(|b|)=|\alpha_{g}(b)|$ for all $b \in L^{2}(\CM)$, we obtain,
\begin{align*}
&\|\alpha_{g_{n_{k}}}(|x|)-\alpha_{g_{n_{l}}}(|x|)\|^{2}_{2}\\&=\||\alpha_{g_{n_{k}}}(x)|-|\alpha_{g_{n_{l}}}(x)|\|^{2}_{2}\\&\leq 2max\{\|\alpha_{g_{n_{k}}}(x)\|_{2},\|\alpha_{g_{n_{l}}}(x)\|_{2}\}\|\alpha_{g_{n_{k}}}(x)-\alpha_{g_{n_{l}}}(x)\|_{2}\\
&\leq 2\|x\|_{2}\|\alpha_{g_{n_{k}}}(x)-\alpha_{g_{n_{l}}}(x)\|_{2}, ~(\text{as } \alpha_g \text{ is an isometry})
\end{align*}
As we take $k,l \rightarrow \infty$ we get the desired result.
\end{proof}
\noindent For $a,b \in L^{2}(\CM,\tau)$,  we define the following: 
\begin{align*}
    max\{a,b\}&=\frac{a+b+|a-b|}{2}\\
    min\{a,b\}&=\frac{a+b-|a-b|}{2}.
\end{align*}
\begin{thm}\label{Density_of_M_c}
Let $x \in L^{2}_{c}(\CM,\tau)$ with $ x = x^*$, then, for all $a\in \mathbb{R}$, $e_{(a,\infty)}(x)\in \CM\cap L^{2}_{c}(\CM,\tau)$. Consequently, $\CM \cap L^{2}_{c}(\CM,\tau)$ is dense in $L_{c}^{2}(\CM,\tau)$. 
\end{thm}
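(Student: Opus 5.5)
We will exploit that $L^2_c(\CM,\tau)$ is a closed subspace which, by Lemma \ref{xstarcpt} and Lemma \ref{modcpt}, is closed under $x\mapsto x^*$ and $x\mapsto |x|$. Hence it is closed under the lattice operations $\max\{a,b\}$ and $\min\{a,b\}$ defined above for self-adjoint $a,b$. Since $\tau$ is a finite trace, $\widehat{1}\in L^2$, and $\alpha_g(1)=1$, so the orbit of $\widehat 1$ is a single point. Thus $\widehat{1}\in L^2_c(\CM,\tau)$, and so is every real multiple $c\widehat 1$.

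Fix a self-adjoint $x\in L^2_c(\CM,\tau)$ and $a\in\mathbb{R}$. For $k\in\N$ define
\[
y_k:=\min\bigl\{k\,\max\{x-a\widehat 1,\,0\},\ \widehat 1\bigr\},
\]
which by the closure properties lies in $L^2_c(\CM,\tau)$. By functional calculus, $y_k=f_k(x)$ with $f_k(t)=\min\{k(t-a)_+,1\}$. Thus $0\le y_k\le 1$, and in particular $y_k\in\CM$. As $k\to\infty$, $f_k\uparrow \chi_{(a,\infty)}$ pointwise. By the spectral theorem and dominated convergence with respect to the finite spectral measure $\tau(e_{\cdot}(x))$,
\[
\|y_k-e_{(a,\infty)}(x)\|_2^2=\int |f_k(t)-\chi_{(a,\infty)}(t)|^2\,d\tau(e_t(x))\to 0 .
\]
Since $L^2_c$ is closed, $e_{(a,\infty)}(x)\in \CM\cap L^2_c(\CM,\tau)$. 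The same argument applied to $-x$, together with $\widehat 1-e$, shows that $e_{(a,b]}(x)=e_{(a,\infty)}(x)-e_{(b,\infty)}(x)$ also lies in $\CM\cap L^2_c$.

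For density, take any $z\in L^2_c$ and write $z=\frac{z+z^*}{2}+i\frac{z-z^*}{2i}$. By Lemma \ref{xstarcpt} both parts are self-adjoint elements of $L^2_c$, so it suffices to approximate a self-adjoint $x\in L^2_c$. We use step functions
\[
s_N=\sum_{j=-N^2}^{N^2-1}\frac{j}{N}\,e_{(j/N,(j+1)/N]}(x),
\]
which are finite linear combinations of spectral projections, hence in $\CM\cap L^2_c$. Moreover $|s_N(t)-t|\le 1/N$ on $(-N,N]$ and $|s_N(t)-t|\le |t|$ elsewhere. Therefore $\|s_N-x\|_2^2\le N^{-2}+\int_{|t|>N}t^2\,d\tau(e_t(x))\to0$, because $x\in L^2$.

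The main obstacle we anticipate is justifying the functional-calculus identities for unbounded $x\in L^2(\CM,\tau)$. One must check that $\max\{x-a\widehat1,0\}=(x-a)_+$, that $\min\{\cdot,\widehat 1\}$ agrees with the pointwise minimum of functions of $x$, and that the $L^2$ norm is computed by integrating against the spectral measure. This holds because all operators involved are Borel functions of the single self-adjoint $\tau$-measurable operator $x$, so they commute and the strong sums and products coincide with the function operations. We will state this via the spectral theorem for affiliated operators and $\tau(f(x))=\int f\,d\tau(e_t(x))$.
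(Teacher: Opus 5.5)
Your proposal is correct and follows the paper's proof. It uses the same functions $f_k(t)=\min\{k(t-a)_+,1\}$, closure of $L^2_c(\CM,\tau)$ under the lattice operations via Lemma \ref{modcpt} (and you make explicit that $\widehat 1\in L^2_c(\CM,\tau)$, which the paper uses silently), and closedness of $L^2_c(\CM,\tau)$ to capture $e_{(a,\infty)}(x)$. For density you use finite step functions $s_N$ of $x$ instead of the paper's truncations $\int_{1/n}^{n}\lambda\,de_\lambda$ of $y_\pm$, a minor variant that has the small advantage of not needing $\CM_c$ to be a von Neumann algebra.
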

\begin{proof}
We begin by considering the following  sequence of functions
\begin{align*}
f_{n}(t)=min(1,n(max(t-a),0)), ~ \text{ for } t \in \R.
\end{align*}
We note that $(f_{n})_{n\in\mathbb{N}}$ is uniformly bounded above by $1$, and converges pointwise to the function $1_{(a, \infty )}.$ Applying the unbounded operator functional calculus to $x\in L^{2}(\CM,\tau)$, we obtain that the corresponding sequence of operators $(f_{n}(x))_{n \in \mathbb{N}} \subset \CM$ and converge in SOT to $1_{\{s \in \mathbb{R}:s > a\}}(x)=e_{(a,\infty)}(x)$, which is a spectral projection of $x$, i.e.,
\begin{align*}
    \lim_{n \rightarrow \infty}f_{n}(x)\xi= e_{(a,\infty)}(x)\xi
\end{align*}
in $L^{2}$-norm for all $\xi \in L^{2}(\CM, \tau)$, and in particular for $\xi=\widehat{1}$.
By Lemma \ref{modcpt} and definition of max and min, we have that $n(max(x-a),0) \in L^{2}_{c}(\CM,\tau)$ and
\begin{align*}    
f_{n}(x)=min(1,n(max(x-a),0)) \in L^{2}_{c}(\CM,\tau).
\end{align*}
and as a consequence, $e_{(a,\infty)}(x) \in L_c^{2}(\CM,\tau)$ for all $a \in \mathbb{R}$.

\medskip
\noindent Let, $y\in L^2_c(\CM,\tau)$ be self adjoint.
Then $y$ can be written as  $y=y_+-y_-$,   where $y_+=max\{y,0\}$ and $y_-=-min\{y,0\}$. By the definition of $max$ and $min$, $y_+,~y_- \in L^2_c(\CM,\tau)$. Thus, we may assume that $y$ is positive.\\
By the spectral theorem for unbounded operators we have $y=\int_0^\infty \lambda de_\lambda$ where the spectral projections $e_\lambda$ belong to $\CM_c$. 
Let us define $y_n=\int_{\frac{1}{n}}^n\lambda de_\lambda$. Since $y_n$ is a bounded operator and the spectral projections of $y_n$ is in $\CM_c$, therefore, we have  $y_n\in \CM_c$. We also note that 
\begin{align*}
    |y-y_n|^2=\int_0^{\frac{1}{n}} \lambda^2 de_\lambda + \int_n^\infty \lambda^2 de_\lambda \leq y^2.
\end{align*}
Since  $y\in L^2(\CM, \tau)$, $y^2 \in L^1(\CM,\tau)$ and $y_n$ converges to $y$ in measure topology, by Lebesgue Dominated Convergence Theorem, we have that $y_n\to y$ in $\|\cdot\|_2$ as $n \rightarrow \infty$. Hence $\CM \cap L^2_c(\CM, \tau)$ is dense in $L^2_c(\CM, \tau).$
\end{proof}

\noindent Now we are ready to prove the decomposition theorem.
\begin{proof} \textit{of Theorem \ref{JdLG}}: 
We note that  $\CM_{c}$ is a von Neumann subalgebra of $\CM$,  $L^{2}_c(\CM,\tau)$ and $L_{wm}^{2}(\CM,\tau)$ are orthogonal subspaces. Since $\CM$ is a finite von Neumann algebra with f.n trace $\tau$, there exists a f.n conditional expectation  $\CE_{c}: \CM \to \CM_c$ with $ \tau \circ \CE_c = \tau$,  which implements the   decomposition as follows:\\
\noindent Let $a\in \CM$, then $\CE_{c}(a)\widehat{1}\in \CM \cap L_c^{2}(\CM,\tau)$. We need to then show that $(a-\CE_{c}(a))\widehat{1}$ is orthogonal to $L_c^{2}(\CM,\tau)$. Since $\CM \cap L_c^{2}(\CM,\tau)$ is dense in $L^{2}(\CM,\tau)$ in $L^{2}$-norm, it is enough to show the orthogonality to elements of the form $b\widehat{1}$, where $b \in \CM_{c}$.
Indeed, let $b \in \CM_{c}$ as above. We have,
\begin{align*}
 \langle a\widehat{1}-\CE_{c}(a)\widehat{1},b\widehat{1}\rangle_{L^{2}}
=&\tau(b^{*}a-b^{*}\CE_{c}(a))\\
=&\tau(b^{*}a)-\tau(\CE_{c}(b^{*}a)) ~~\text{ (as $b \in \CM_{c}$ )}\\
=&0~ ~\text{ since } \tau \circ \CE_c = \tau.
\end{align*}
therefore, $(a-\CE_{c}(a))\widehat{1} \in \CM \cap L_{wm}^{2}(\CM,\tau)$, $a-\CE_{c}(a) \in \CM_{wm}$, and we are done.

\medskip
\noindent
To show that $\CM \cap L_{wm}^{2}(\CM,\tau)$ is dense in $L^{2}_{wm}(\CM,\tau)$,  we first note that  $L_{c}^{2}(\CM,\tau)$ is the GNS  Hilbert space associated to the von Neumann algebra $\CM_{c}$.  Therefore the conditional expectation $\CE_{c}:\CM \rightarrow \CM_{c}$ extends to a projection $p_{c}:L^{2}(\CM,\tau) \rightarrow L_{c}^{2}(\CM,\tau)$.\\
Let $\xi \in L_{wm}^{2}(\CM,\tau)$, then, by the density of $\CM$ in $L^{2}(\CM,\tau)$, given $\epsilon >0$, there exists $a \in \CM$ such that $\|\xi-a\widehat{1}\|_{2}<\epsilon$, and
\begin{align*}
&\|(I-p_c)(\xi-a\widehat{1})\|_{2}<\epsilon \\
\implies &\|\xi-(I-\CE_c)(a\widehat{1})\|_{2}<\epsilon.
\end{align*}
Since $p_{c}(a\widehat{1})=\CE_{c}(a)\widehat{1}$ when $a\in\CM$, our required element is $a-\CE_{c}(a)$, which is in the weak mixing part by the decomposition.
\end{proof}

\subsection{ Weak mixing Characterization using Reiter Sequence}

A sequence of probability measures $(\mu_n)_n$ on a locally compact group
$G$ is called a \emph{left Reiter sequence} (respectively, a
\emph{right Reiter sequence}) if, for every $s\in G$,
\[
\|\delta_s * \mu_n-\mu_n\|_1 \longrightarrow 0
~~
\left(\text{respectively, }\;
\|\mu_n * \delta_s-\mu_n\|_1 \longrightarrow 0\right)
\]
as $n\to\infty$. We call $(\mu_n)_n$ a \emph{two-sided Reiter sequence}
if it is both a left and a right Reiter sequence.
If $G$ be a l.c.s.c amenable group, then a left
Reiter sequence always exists. In particular, if $(F_n)_n$ is a Følner
sequence in $G$, then
\[
d\mu_n=\frac{\chi_{F_n}}{m(F_n)}\,dm
\]
defines a left Reiter sequence. For further details on Reiter sequences,
we refer the reader to \cite{H_Reiter2000}.
Let $\mu$ be a probability measure on $G$, and let $ \pi:G\longrightarrow\mathcal{U}(\mathcal{H})$
be a strongly continuous unitary representation of $G$ on a Hilbert
space $\mathcal{H}$. Define the operator
\[
T\xi:=\int_G \pi(g)\xi\,d\mu(g),
\qquad \xi\in\mathcal{H}.
\]
By the properties of the Bochner integral, $T$ is a contraction on
$\mathcal{H}$; that is,
\[
\|T\xi\|_{\mathcal{H}}\leq \|\xi\|_{\mathcal{H}},
~~\xi\in\mathcal{H}.
\]
We next establish a mean ergodic theorem for the operators associated
with a Reiter sequence and a characterization of the weak mixing component along a Reiter sequence, which is possibly known in the literature. 
\begin{thm}[Mean Ergodic Theorem for Reiter Sequences]\label{METReiter}
     Let $G$ be a locally compact amenable group, $\pi:G\to \mathcal{U(H})$ a strongly continuous unitary representation on a Hilbert space $\mathcal{H}$, and
$(\mu_n)$ a right Reiter sequence on $G$. Define
$$T_n\xi=\int_G\pi(g)\xi d\mu_n ~~~~~\xi\in \mathcal{H}$$
Then $T_n$ converges strongly to projection onto the fixed point subspace of $\pi$.
 \end{thm}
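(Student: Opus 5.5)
We prove the theorem with the classical von Neumann splitting $\mathcal{H}=\mathcal{H}^{\pi}\oplus\overline{\operatorname{span}}\{\pi(s)\eta-\eta: s\in G,\ \eta\in\mathcal{H}\}$. Here $\mathcal{H}^{\pi}=\{\xi:\pi(g)\xi=\xi \text{ for all } g\in G\}$ is the fixed point subspace, and $P$ denotes the orthogonal projection onto it.

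\emph{Step 1: the splitting.} If $\xi\perp \pi(s)\eta-\eta$ for all $s,\eta$, then $\langle \pi(s)^*\xi-\xi,\eta\rangle=0$ for all $\eta$. Hence $\pi(s^{-1})\xi=\xi$ for every $s$, so $\xi\in\mathcal{H}^{\pi}$. Conversely, a fixed vector $\xi$ satisfies $\langle \xi,\pi(s)\eta-\eta\rangle=\langle \pi(s)^*\xi-\xi,\eta\rangle=0$. So the closed span $\mathcal{K}$ of the coboundaries $\pi(s)\eta-\eta$ is exactly $(\mathcal{H}^{\pi})^\perp$.

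\emph{Step 2: fixed vectors.} For $\xi\in\mathcal{H}^{\pi}$ we have $T_n\xi=\int_G\xi\,d\mu_n=\xi=P\xi$, because each $\mu_n$ is a probability measure.

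\emph{Step 3: coboundaries.} For $\xi=\pi(s)\eta-\eta$, a change of variables gives
\[
\int_G \pi(g)\pi(s)\eta\,d\mu_n(g)=\int_G\pi(g)\eta\,d(\mu_n*\delta_s)(g),
\]
since $\mu_n*\delta_s$ is the pushforward of $\mu_n$ under $g\mapsto gs$. Therefore
\[
\|T_n\xi\|\le \Bigl\|\int_G\pi(g)\eta\,d(\mu_n*\delta_s-\mu_n)(g)\Bigr\|\le \|\eta\|\,\|\mu_n*\delta_s-\mu_n\|_1\longrightarrow 0
\]
by the right Reiter property. Here we use that for a finite signed measure $\nu$ the Bochner integral satisfies $\|\int\pi(g)\eta\,d\nu\|\le\|\eta\|\,|\nu|(G)$, where $|\nu|(G)$ is its total variation. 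Strong continuity of $\pi$ makes the integrand bounded and continuous, so the integral is well defined. By linearity, $T_n\xi\to0$ on $\operatorname{span}\{\pi(s)\eta-\eta\}$.

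\emph{Step 4: closure.} This is an $\varepsilon/3$ argument using $\sup_n\|T_n\|\le1$. Given $\xi\in\mathcal{K}$ and $\varepsilon>0$, pick $\xi'$ in the span with $\|\xi-\xi'\|<\varepsilon$. Then $\|T_n\xi\|\le\varepsilon+\|T_n\xi'\|$, so $\limsup_n\|T_n\xi\|\le\varepsilon$, and hence $T_n\xi\to0$ on all of $\mathcal{K}=(\mathcal{H}^{\pi})^\perp$.

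\emph{Conclusion.} Writing $\xi=P\xi+(I-P)\xi$ and combining Steps 2 and 4 gives $T_n\xi\to P\xi$ for every $\xi$.

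The only step needing care is Step 3, namely the convolution/change-of-variables identity and its orientation. We must use $\mu_n*\delta_s$, not $\delta_s*\mu_n$: the identity comes from right translation by $s$, which is why the theorem assumes a right Reiter sequence. Everything else is routine Hilbert space theory.
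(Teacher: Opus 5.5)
Your proof is correct and follows essentially the same route as the paper: the splitting $\mathcal{H}=\mathcal{H}^{\pi}\oplus\overline{\operatorname{span}}\{\pi(s)\eta-\eta\}$, the bound $\|T_n(\pi(s)\eta-\eta)\|\le\|\eta\|\,\|\mu_n*\delta_s-\mu_n\|_1$ from the right Reiter property, and extension by density using $\sup_n\|T_n\|\le 1$. Your explicit check that $T_n$ fixes $\mathcal{H}^{\pi}$ is accurate, and so is your care with the orientation $\mu_n*\delta_s$; the paper's displayed computation briefly writes $\delta_s*\mu_n$ before arriving at the correct $\|\mu_n*\delta_s-\mu_n\|_1$.
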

 \begin{proof}
 Let $\mathcal{H}^\pi=\{\xi \in \mathcal{H}: \pi(g)\xi=\xi \text{ for all }\xi \in \mathcal{H}\}$ and $P$ be the projection onto $\mathcal{H}^\pi.$ Then we have $(\mathcal{H}^\pi)^{\perp}=\overline{span\{\xi-\pi(s)\xi: \xi \in \mathcal{H}, s\in G\}}.$
  For any $\eta=\xi-\pi(s)\xi, s\in G,$ we have 
  \begin{align*}
  \|T_n\eta\|
      &=\|T_n\pi(s)(\xi)-T_n(\xi)\|\\&= \|\int_G\pi(gs)\xi~d\mu_n-\int_G\pi(g)\xi~d\mu_n\|\\
      &=\|\int_G\pi(g)\xi~ d(\mu_n*\delta_s)-\int_G\pi(g)\xi~ d\mu_n\|\\
      &=\|\int_G\pi(g)\xi~ d(\delta_s*\mu_n-\mu_n)\|\\
      &\leq \int_G\|\pi(g)\xi\|~ d|\delta_s*\mu_n-\mu_n|\\
      &=\|\xi\|\|\mu_n*\delta_s-\mu_n\|_1
  \end{align*}
Using the right Reiter property of $(\mu_n)_n,$ $\|T_n\eta\| \to 0.$Hence the density of $span\{\xi-\pi(s)\xi: \xi \in \mathcal{H}, s\in G\} $ and uniform boundedness of $(T_n)$ implies $T_n\eta \to 0$ for every $\eta \in (\mathcal{H}^{\pi})^{\perp}.$
Then every $\xi \in \mathcal{H}$ decomposes as $$\xi=P\xi+(\xi-P\xi)$$
with $P\xi\in \mathcal{H}^\pi$ and $\xi-P\xi\in (\mathcal{H}^\pi)^{\perp}.$ Hence
$$T_n\xi=T_n(P\xi)+T_n(\xi-P\xi)
\longrightarrow
P\xi$$
proving that $T_n\xrightarrow{s}P.$
 \end{proof}

\begin{thm}\label{thm:weak-mixing-Reiter}
Let $G$ be an amenable locally compact group, $\pi:G\to \mathcal{U(H)}$ a strongly
continuous unitary representation, and $(\mu_n)$ a left Reiter sequence on
$G$. Suppose $ \xi \in \CH_{ wm} $, it follows that
$$ \int_G |\langle \pi(g)\xi,\eta\rangle|\,d\mu_n(g)
    \longrightarrow 0, ~~ \text{ for all } \eta \in \CH.$$
    
\end{thm}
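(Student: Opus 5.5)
The plan is the classical tensor-product trick: reduce the weak mixing statement along $(\mu_n)$ to the mean ergodic theorem (Theorem \ref{METReiter}) applied to the representation $\pi\otimes\overline{\pi}$ on $\CH\otimes\overline{\CH}$. Here $\CH_{wm}$ is the orthogonal complement of $\CH_c$, and $\CH_c$ is spanned by the finite dimensional subrepresentations.

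First apply Cauchy--Schwarz with respect to the probability measure $\mu_n$:
\[
\Bigl(\int_G |\langle \pi(g)\xi,\eta\rangle|\,d\mu_n(g)\Bigr)^2\le \int_G |\langle \pi(g)\xi,\eta\rangle|^2\,d\mu_n(g)=\Bigl\langle \int_G (\pi\otimes\overline{\pi})(g)(\xi\otimes\overline{\xi})\,d\mu_n(g),\ \eta\otimes\overline{\eta}\Bigr\rangle .
\]
So it suffices to show that $T_n(\xi\otimes\overline\xi)\to 0$ weakly, where $T_n=\int (\pi\otimes\overline\pi)(g)\,d\mu_n$.

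There is a left/right mismatch: the hypothesis gives a left Reiter sequence, while Theorem \ref{METReiter} is stated for right Reiter sequences. I would fix this by substituting $g\mapsto g^{-1}$. The push-forward $\check\mu_n$ of $\mu_n$ under inversion is a right Reiter sequence, and $|\langle\pi(g^{-1})\xi,\eta\rangle|=|\langle\xi,\pi(g)\eta\rangle|$. This suggests rewriting the integral as $\int|\langle \pi(g)\eta,\xi\rangle|\,d\check\mu_n(g)$ and applying the Cauchy--Schwarz bound to $\eta\otimes\overline\eta$ instead. Running the same argument directly with $\check\mu_n$ would also work; the key point is that $(\check\mu_n)$ is right Reiter.

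Theorem \ref{METReiter} then gives strong convergence of $\check T_n(\eta\otimes\overline\eta)$ to $P(\eta\otimes\overline\eta)$, where $P$ is the projection onto the $\pi\otimes\overline\pi$-invariant vectors. The limit of the squared quantity is therefore $\langle P(\eta\otimes\overline\eta),\xi\otimes\overline\xi\rangle$. The next step is to show that $P(\xi\otimes\overline\xi)=0$, which settles either orientation since $P$ is self-adjoint.

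Identify $\CH\otimes\overline\CH$ with the Hilbert--Schmidt operators on $\CH$. A $\pi\otimes\overline\pi$-invariant vector is then a Hilbert--Schmidt operator $S$ commuting with $\pi(G)$. Then $S^*S$ is compact, self-adjoint and commutes with $\pi$, so its nonzero eigenspaces are finite dimensional and $\pi$-invariant. These eigenspaces lie in $\CH_c$, because finite dimensional invariant subspaces have relatively compact orbits. Consequently $\mathrm{ran}(S^*)\subseteq\overline{\mathrm{ran}(S^*S)}\subseteq\CH_c$, and applying the same argument to $SS^*$ gives $\mathrm{ran}(S)\subseteq \CH_c$.

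For $\xi\in\CH_{wm}=\CH_c^\perp$ and any invariant $S$, this gives $\langle \xi\otimes\overline\xi, S\rangle_{HS}=\langle S^*\xi,\xi\rangle$ up to conjugation. Since $\mathrm{ran}(S^*)\subseteq\CH_c\perp\xi$, this inner product vanishes. Hence $\xi\otimes\overline\xi\perp(\CH\otimes\overline\CH)^{\pi\otimes\overline\pi}$, so $P(\xi\otimes\overline\xi)=0$ and the limit is $0$.

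The main obstacle is this identification of invariant Hilbert--Schmidt operators with operators whose ranges lie in $\CH_c$. It uses two inputs:
\begin{itemize}
\item finite dimensional subrepresentations lie in $\CH_c$, which is direct from total boundedness;
\item the spectral theorem for compact operators.
\end{itemize}
The bookkeeping of left versus right Reiter sequences under inversion, which matters for nonunimodular $G$, also needs care but is routine.
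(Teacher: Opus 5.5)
Your proposal is correct and follows the paper's proof essentially step for step: the tensor-product trick with $\pi\otimes\overline{\pi}$, Theorem~\ref{METReiter}, the Hilbert--Schmidt identification of invariant vectors, the spectral theorem for a compact self-adjoint operator commuting with $\pi$, and a final Cauchy--Schwarz step (the paper uses $TT^*$ and shows $\CH_{wm}\subseteq\ker(T^*)$, which is the dual form of your inclusion $\mathrm{ran}(S^*)\subseteq\CH_c$). Your passage to the inverted measures $\check{\mu}_n$ is worth keeping: the paper applies Theorem~\ref{METReiter}, which is stated and proved only for right Reiter sequences, directly to the left Reiter sequence $(\mu_n)$ without comment, and your inversion closes that small gap (alternatively, one can note that the weak convergence $T_n\zeta\to P\zeta$, which is all that is needed here, already holds for left Reiter sequences).
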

\begin{proof}
 Consider the conjugate Hilbert space $\overline{ \CH} $ of $\CH$ and the representation $ \overline{\pi} : G \to \CU(\overline{\CH})$ . Consider the tensor product representation
$\pi\otimes\overline{\pi}$ on $\mathcal{H}\otimes\overline{\mathcal{H}}$. Since
\[
\left\langle
(\pi\otimes\overline{\pi})(g)
(\xi\otimes\overline{\xi}),
\eta\otimes\overline{\eta}
\right\rangle
=
|\langle\pi(g)\xi,\eta\rangle|^2,
\]
by Theorem \ref{METReiter} we have,
\begin{align*}
    \int_{G}|\langle\pi(g)\xi,\eta\rangle|^2d\mu_{n}(g) \rightarrow\langle P_{\pi \otimes \overline{\pi}}(\xi \otimes \overline{\xi}),\eta \otimes \overline{\eta}\rangle \text{ as $n \rightarrow \infty$},
\end{align*}
where $P_{\pi \otimes \overline{\pi}}$ denotes the orthogonal projection onto the fixed point subspace of $\pi\otimes \overline{\pi}.$

\medskip
\noindent
Then it follows that  $P_{\pi \otimes \overline{\pi}}(\xi\otimes\overline{\xi})=0.$
To show this, we consider the unitary  $ \Phi:\CH \otimes \overline{\CH} \rightarrow \CH\CS(\CH)$ defined by 
\begin{align*}
\Phi(\xi \otimes \overline{\eta}) = \omega_{\eta,\xi}(\cdot)=\langle\cdot,\eta\rangle\xi,
\end{align*}
where $\CH \CS(\CH)$ is the space of Hilbert Schmidt operators.
Therefore, $\pi \otimes \overline{\pi}$ can be seen as a representation on $\CH \CS(\CH)$   and the action is by conjugation:
\begin{align*}
(\pi \otimes \overline{\pi})(g)(T)=\pi(g)T\pi(g)^{*}, \text{ for all } T \in \CH \CS(\CH).
\end{align*}
Let $(\CH \CS(\CH))^{G}=\{T \in \CH \CS(\CH): \pi(g)T=T\pi(g) \text{ for all $g \in G$}\}.$
We note that the unitary $\Phi:\CH \otimes\overline{\CH} \rightarrow\CH\CS(\CH)$ satisfies $\Phi((\pi \otimes \overline{\pi})^{G})=(\CH \CS(\CH))^{G}$.

\medskip 
\noindent
Now we note that  $ \omega_{\xi,\xi} \perp (\CH \CS(\CH))^{G}.$ Indeed, first observe that 
\begin{align*}
\text{Tr}((\omega_{\xi,\xi})^{*}T)=\langle T^{*}\xi,\xi \rangle, ~~ \text{ for all } T \in (\CH\CS(\CH))^{G},
\end{align*}
for all $T \in (\CH\CS(\CH))^{G}$, where  $\text{Tr}$ is the trace on $\CB(\CH)$. 
Equivalently, we need to show that $ \xi \in \text{ker}(T^*) $  for  all $ T \in (\CH\CS(\CH))^{G} $.  
Since $\text{ker}(T^*)=\text{ker}(TT^{*})$, and applying the spectral theorem for compact self-adjoint operators to the operator $TT^{*}$, we obtain
\begin{align*}
TT^{*}(\eta)=\sum_{n=1}^{\infty}\lambda_{n}\langle\eta,e_{n}\rangle e_{n}~~ \text{ for } \eta \in \CH,
\end{align*}
where \((e_n)\) is an orthonormal family of vectors in $\CH$ and \(\lambda_n>0\). Therefore, 
$ \text{ker}(TT^*)^\perp
=
\overline{\operatorname{span}}\{e_n:n\in\mathbb N\}.$
Since \(T\) commutes with \(\pi(g)\), so does \(TT^*\). Thus
\[
TT^*\pi(g)e_n
=
\pi(g)TT^*e_n
=
\lambda_n\pi(g)e_n,
~~ g\in G.
\]
Therefore, for each $n \in \N$,    $ \{ \pi(g)e_n : g \in G  \}  $ contained in the eigenspace for $ \lambda_n$, which is finite dimensional. Thus,   we have $ \text{ker}(TT^*)^\perp\subseteq\mathcal H_c.$
Taking orthogonal complements gives
\[
\mathcal H_{wm}
=
\mathcal H_c^\perp
\subseteq
\text{ker}(TT^*)
=
\text{ker}(T^*).
\]
Since $\xi \in \CH_{wm}$, $ \text{Tr}((\omega_{\xi,\xi})^{*}T)=0, ~ \text{ for all } T \in (\CH\CS(\CH))^{G}.$
Therefore, $ \xi \otimes \overline{\xi} \perp (\pi\otimes \overline{\pi})^{G}$ and consequently, it follows that  $  P_{\pi \otimes \overline{\pi}}(\xi\otimes\overline{\xi})=0.$ Then the conclusion of the theorem follows from Cauchy-Schwarz  inequality.

\end{proof}

\section{ \bf Non-commutative Wiener-Winter theorem } 

 In this section, we establish a noncommutative Wiener-Wintner theorem. Let $\CM$ be a finite von Neumann algebra equipped with a faithful normal trace $\tau$, and let $G$ be a locally compact second countable group. Suppose that $(\CM,G,\alpha)$ is a $\tau$-preserving dynamical system. 

\medskip
\noindent
Let $\mathrm{Rep}^d_u(G)$ denote the collection of all finite-dimensional unitary representations
\[
\pi : G \to \CU_d(\C),
\]
where $\CU_d(\C)$ denotes the group of all unitary matrices in $M_d(\C)$ for some $d \in \N$. Further, let $C_1(\CU_d(\C))$
denote the collection of all continuous functions on $\CU_d(\C)$ with norm equal to $1$. Define
\[
\S := \mathrm{Rep}^d_u(G)\times \CU_d(\C)\times C_1(\CU_d(\C)).
\]
For $x \in L^p(\CM,\tau)$, $1<p<\infty$, and $\Gamma=(\pi,u,\phi)\in \S$, consider the averages
\begin{align*}
\A_n(x)
&=
\frac{1}{m(F_n)}
\int_{F_n}\alpha_g(x)\, d\mu(g),\\
\A_n(x,\Gamma)
&=
\frac{1}{m(F_n)}
\int_{F_n}\alpha_g(x)\,\phi(\pi(g))u \, d\mu(g).
\end{align*}
In this setting, we aim to establish the b.a.u  convergence of the averages  $( \A_n(x,\Gamma))$ for $ x \in L^p(\CM, \tau)$ with $  1 \leq p < \infty$, 
uniformly over $\Gamma \in \mathcal{S}$; that is, the projection appearing in the definition of bilaterally almost uniform (b.a.u) convergence can be chosen independently of $\Gamma$.

\medskip
\noindent
Throughout this section, we assume that $(F_n)$ is an admissible F\o lner sequence, unless stated otherwise. We begin by recalling the following theorem from \cite{cadilhac2022noncommutative}, which will be used in the sequel.
\begin{thm}\label{Cadilac-Wang}
Let $(F_n)$ be an admissible F\o lner sequence on $G$. Let $\CM$ be a semifinite von Neumann algebra equipped with a faithful normal trace $\tau$, and let $(\CM,G,\alpha)$ be a $\tau$-preserving dynamical system. Then, for every $x\in L^1(\CM,\tau)$ and $\lambda>0$, there exists a projection $e\in \CM$ such that
\[
\sup_{n\geq 1}\|e\A_n(x)e\|_\infty \leq \lambda, ~~
 \text{  and   } ~~
\lambda\,\tau(1-e)\lesssim \|x\|_1.
\]
Moreover, for every $x\in L^p(\CM,\tau)_+$ with $1<p<\infty$, there exists $a\in L^p(\CM,\tau)_+$ satisfying
\[
\A_n(x)\leq a,~~ \text{ for all } n\geq 1, \text{  and  } \|a\|_p \lesssim \|x\|_p.
\]
\end{thm}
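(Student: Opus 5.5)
Since this is the maximal ergodic inequality of Cadilhac and Wang, I would follow the standard route for noncommutative maximal inequalities for group actions: transfer to a convolution problem on $G$, prove a weak type $(1,1)$ estimate there, and then interpolate.

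\emph{Step 1: transference.} Fix $N$ and work with the finitely many averages $\A_1,\dots,\A_N$; it suffices to prove the estimates with constants independent of $N$. Choose a large compact set $K\subset G$ with $m(KF_n)\le(1+\delta)m(K)$ for $n\le N$, which is possible by the Følner property. For $x\in L^1(\CM,\tau)$, consider the $\CM$-valued function $X(h)=\alpha_h(x)\chi_{F_N K}(h)$. This is an element of $L^1(L^\infty(G)\bar\otimes\CM)$, and since $\alpha_h$ is trace preserving its norm is $m(F_NK)\|x\|_1$. The averages satisfy $\alpha_h(\A_n(x))=(\mu_n * X)(h)$ for $h\in K$, where $\mu_n$ is normalized Haar measure on $F_n$ acting by the appropriate (right) translation. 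Two facts then transfer a maximal inequality for the convolutions $X\mapsto\mu_n*X$ on the semifinite algebra $\mathcal N=L^\infty(G)\bar\otimes\CM$ back to $\CM$, with a loss of only $m(F_NK)/m(K)\le 1+\delta$. The first is that $\alpha_h$ is a trace-preserving $*$-automorphism. The second is that a projection $E\in\mathcal N$ with $\sup_n\|E(\mu_n*X)E\|\le\lambda$ has fibres $E(h)$ for $h\in K$, and one of these can be chosen, via a Fubini averaging argument over $h\in K$, with $\tau(1-E(h))\lesssim\tau\otimes m(1-E)/m(K)$.

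\emph{Step 2: the weak $(1,1)$ estimate on $\mathcal N$.} This is where the main obstacle lies. The admissibility of $(F_n)$ is designed to give a Vitali-type covering, or a doubling or dyadic-like filtration structure, on $G$. I would try to build from it a noncommutative Calderón--Zygmund decomposition in Parcet's style, or more simply a comparison to a martingale. Concretely, I would first attempt to dominate each $\mu_n*X$, for $X\ge0$, by a bounded constant times a conditional expectation $\mathbb E_{k(n)}X$ onto a filtration of $L^\infty(G)$ attached to a tiling adapted to the Følner sets. Cuculescu's weak type $(1,1)$ inequality for noncommutative martingales then yields a projection $E$ with $\sup_k\|E\,\mathbb E_kX\,E\|\le\lambda$ and $\lambda\,\tau(1-E)\le\|X\|_1$. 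Since a pointwise domination $0\le A\le CB$ gives $\|EAE\|\le C\|EBE\|$, this transfers to the averages. If exact domination fails, I would instead run the Calderón--Zygmund decomposition $X=g+b$, where $g$ is bounded and $b$ is made of off-diagonal and diagonal pieces supported on the covering sets, and control the bad part using the covering property of admissible sequences.

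\emph{Step 3: strong $(p,p)$ for $1<p<\infty$.} The trivial estimate $\A_n(x)\le\|x\|_\infty$ gives the $L^\infty$ bound with $a=\|x\|_\infty 1$. Combining this with the weak $(1,1)$ bound from Steps 1--2 through Junge--Xu's noncommutative Marcinkiewicz interpolation theorem for maximal functions, in the $L^p(\ell_\infty)$ formulation, yields $\|(\A_n(x))_n\|_{L^p(\ell_\infty)}\lesssim\|x\|_p$. For positive $x$, the $L^p(\ell_\infty)$ norm of a positive sequence is the infimum of $\|a\|_p$ over $a\ge\A_n(x)$ for all $n$, which is exactly the second assertion. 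Finally, letting $N\to\infty$ is handled by weak$^*$ compactness of the dominating elements $a$ and by an increasing-limit argument for the projections.
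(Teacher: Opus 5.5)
The paper gives no proof of this statement. It is quoted from Cadilhac and Wang, and \emph{admissible} is defined only there. Judged on its own terms, your outer frame is the standard one and is essentially sound: Calder\'on transference to $L^\infty(G)\bar{\otimes}\CM$ with a Fubini choice of a good fibre, Junge--Xu interpolation between the weak $(1,1)$ bound and the trivial $L^\infty$ bound, and weak compactness in $L^p$ to remove the restriction $n\le N$. Two small repairs are needed.
\begin{itemize}
\item The truncation set should be $\bigcup_{n\le N}KF_n$ rather than $F_NK$, since the $F_n$ need not be nested. Also, $K$ must be almost invariant under \emph{right} multiplication by $\bigcup_{n\le N}F_n$; with left Haar measure this needs unimodularity.
\item The projections $e_N$ obtained for different $N$ are not nested, so an increasing limit is not available. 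Instead take a weak$^*$ cluster point $z$ of $(e_N)$. For $y\ge 0$, $\|e_Nye_N\|\le\lambda$ is equivalent to $y^{1/2}e_Ny^{1/2}\le\lambda$, which does pass to the limit. Then use $e=\chi_{[1/2,1]}(z)$, losing a factor $2$.
\end{itemize}

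The genuine gap is Step 2. It is the entire content of the theorem, and you leave it as a list of strategies. You never state or use the admissibility hypothesis, yet it cannot be dropped: there are F\o lner sequences of intervals in $\mathbb Z$ (moving averages such as $[k^2,k^2+k)$) along which even the classical maximal inequality fails.

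Your first strategy fails already in the most favourable case, $G=\mathbb Z$ and $F_n=\{0,\dots,n-1\}$. Whatever partition into finite cells defines $\mathbb E_{k(2)}$, some $b$ and $b+1$ lie in different cells. For $X=\delta_{b+1}\otimes y$ with $0\le y\in\CM$ you get $(\mu_2*X)(b)=y/2$ but $(\mathbb E_{k(2)}X)(b)=0$. So no operator domination $\mu_n*X\le C\,\mathbb E_{k(n)}X$ by a single filtration is possible. In doubling situations this is circumvented by summing over finitely many shifted dyadic systems. That device rests on doubling geometry, and you give no mechanism by which admissibility would supply a substitute. You also do not justify the existence of a nested tiling adapted to the $F_n$, which is itself a nontrivial input for general amenable groups.

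The fallback is only named. Noncommutatively there is no maximal operator $\sup_n\mu_n*X$ whose level set a covering lemma could handle; this is why noncommutative weak $(1,1)$ maximal bounds are obtained through Cuculescu's construction. Moreover, the off-diagonal terms of a Parcet-type Calder\'on--Zygmund decomposition are not supported on the exceptional projection. So ``control the bad part using the covering property'' is exactly the missing argument, not a routine step. As written, the proposal reduces the theorem to its own central estimate without proving it.
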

In \cite{cadilhac_noncommutative_2019}, Cadilhac and Wang further proved that, for $x\in L^p(\CM,\tau)$, the averages $\A_n(x)$ converge bilaterally almost uniformly whenever $1\leq p<2$, and converge almost uniformly whenever $2\leq p<\infty$.

For convenience, we record the following consequence, which will be used in later results. We also include a short proof for completeness.

\begin{lem}\label{Fix_point_density}
  $(\CM, G,\alpha)$ be as above. Then we have 
  \begin{align*}
      \overline{\CM^G}^{\|\cdot\|_2} = L^2(\CM,\tau)^G,
  \end{align*}
\end{lem}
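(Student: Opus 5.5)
One inclusion is immediate. If $a\in\CM^G$, then $u_g\widehat a=\widehat{\alpha_g(a)}=\widehat a$ for all $g\in G$. Hence $\CM^G\subseteq L^2(\CM,\tau)^G$, and since the fixed-point space of a unitary representation is $\|\cdot\|_2$-closed, we get $\overline{\CM^G}^{\|\cdot\|_2}\subseteq L^2(\CM,\tau)^G$.

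For the reverse inclusion we use the mean ergodic theorem together with an averaging argument in $\CM$. Let $P$ be the orthogonal projection onto $L^2(\CM,\tau)^G$. Take $\xi\in L^2(\CM,\tau)^G$ and $\epsilon>0$. By Theorem \ref{Density_thm} choose $a\in\CM$ with $\|\xi-\widehat a\|_2<\epsilon$; then $\|\xi-P\widehat a\|_2=\|P(\xi-\widehat a)\|_2<\epsilon$. So it suffices to show that $P\widehat a=\widehat b$ for some $b\in\CM^G$.

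To find $b$, consider the averages $\A_n(a)=\frac{1}{m(F_n)}\int_{F_n}\alpha_g(a)\,dm(g)$, understood as $\sigma$-weak integrals in $\CM$. They satisfy $\|\A_n(a)\|_\infty\le\|a\|_\infty$, and their GNS vectors are $\widehat{\A_n(a)}=T_n\widehat a$ with $T_n=\int\pi(g)\,d\mu_n$ and $d\mu_n=\chi_{F_n}dm/m(F_n)$. By Theorem \ref{METReiter} (using the Reiter sequence on the appropriate side for the Følner sequence, via unimodularity or passing to $g\mapsto g^{-1}$ if needed), $T_n\widehat a\to P\widehat a$ in $\|\cdot\|_2$. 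Since the $\A_n(a)$ lie in the ball of radius $\|a\|_\infty$, which is $\sigma$-weakly compact, some subnet converges $\sigma$-weakly to some $b\in\CM$ with $\|b\|\le\|a\|$. For any $c\in\CM$, $\tau(c^*\A_n(a))=\langle T_n\widehat a,\widehat c\rangle\to\langle P\widehat a,\widehat c\rangle$, and along the subnet the same quantity converges to $\tau(c^*b)=\langle\widehat b,\widehat c\rangle$. By density of $\widehat{\CM}$ in $L^2(\CM,\tau)$, this gives $\widehat b=P\widehat a$.

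It remains to check that $b$ is $G$-invariant. We have $\widehat{\alpha_g(b)}=u_g\widehat b=u_gP\widehat a=P\widehat a=\widehat b$, and faithfulness of $\tau$ (injectivity of $x\mapsto\widehat x$) then gives $\alpha_g(b)=b$. Thus $b\in\CM^G$ and $\|\xi-\widehat b\|_2<\epsilon$, which proves the lemma.

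The only delicate point is the identification of the $\sigma$-weak limit $b$ with the $L^2$-limit $P\widehat a$, and this is handled by pairing against $\widehat c$ for $c\in\CM$ as above. An alternative that avoids the averages altogether is to use the trace-preserving normal conditional expectation onto the von Neumann subalgebra $\CM^G$. Its $L^2$ extension is a projection onto $\overline{\CM^G}^{\|\cdot\|_2}$, and one would then show this subspace is all of $L^2(\CM,\tau)^G$; but the averaging argument above is more direct.
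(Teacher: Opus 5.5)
Your proof is correct, and it takes a genuinely different route from the paper's.

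The paper starts from an invariant vector $x\in L^2(\CM,\tau)^G$ and reduces to $x\geq 0$, using that $x^*$ and, by Corollary \ref{mod_preserving}, $|x|$ are again invariant. It then argues that invariance of $x=\int_0^\infty\lambda\,de_\lambda$ forces every spectral projection $e_\lambda$ into $\CM^G$, by uniqueness of the spectral resolution. Finally it approximates $x$ in $\|\cdot\|_2$ by the bounded truncations $\int_{1/n}^{n}\lambda\,de_\lambda\in\CM^G$, exactly as in Theorem \ref{Density_of_M_c}.

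You instead approximate $\xi$ by some $\widehat a$ with $a\in\CM$. You then show that the projection $P$ onto invariant vectors maps $\widehat{\CM}$ into $\widehat{\CM^G}$, by taking a $\sigma$-weak cluster point $b$ of the bounded averages $\A_n(a)$. This $b$ is just the trace-preserving conditional expectation of $a$ onto $\CM^G$.

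Your route stays entirely inside bounded operators. It therefore sidesteps the step in the paper where the $L^2$-extension of $\alpha_g$ is applied to an unbounded positive element through its spectral resolution, which needs some care. The price is that you use a F{\o}lner sequence, whereas the spectral argument needs no amenability at all. You could remove that dependence too. The minimal-norm element of the $\|\cdot\|_2$-closed convex hull of $\{u_g\widehat a: g\in G\}$ is $P\widehat a$. That hull lies inside $\{\widehat y: y\in\CM,\ \|y\|\leq\|a\|\}$, which is convex and $\|\cdot\|_2$-closed (it is weakly compact).

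One small point on the mean ergodic step. You only ever use weak convergence $T_n\widehat a\to P\widehat a$, and that is exactly what the adjoint trick gives cleanly. We have $T_n^*=\int_G\pi(g)\,d\widetilde{\mu_n}(g)$, and $(\widetilde{\mu_n})$ is a right Reiter sequence because $(\mu_n)$ is left Reiter. So Theorem \ref{METReiter} gives $T_n^*\to P$ strongly, hence $T_n\to P$ weakly. By contrast, unimodularity by itself would not turn the left F{\o}lner sequence into a right one.
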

\noindent where $\CM^{G}$ is the fixed point subalgebra of the action $\alpha:G \rightarrow Aut(\CM)$ and $L^{2}(\CM,\tau)^{G}$ is the fixed point subspace of the unitary representation of $G$ obtained by extending $\alpha_{g}$ to $L^{2}(\CM,\tau)$ for all $g \in G$.
\begin{proof}
Inclusion $\subseteq$ is immediate. Conversely, suppose $x \in L^2(\CM, \tau)^G$. Since $\alpha_g(x^*)=x^*$, we can assume that $x$ is self-adjoint. Corollary \ref{mod_preserving} implies that we may assume that $x$ is positive self-adjoint. By the spectral theorem we have $$x=\int_0^\infty \lambda de_\lambda ~~ \text{and} ~~ e_\lambda \in \CM.$$
   Since $\alpha_g$ is normal, for all $g \in G$,
   $$\alpha_g(x)=\int_0^\infty \lambda d \alpha_g (e_\lambda).$$
  Hence $\alpha_g(x)=x$ and the uniqueness of spectral projections implies $e_\lambda \in \CM^G$. Since $\CM^G$ is a von Neumann algebra, we use an argument similar to the proof of Theorem \ref{Density_of_M_c} to conclude that $x \in \overline{\CM^G}^{\|\cdot\|_2}$ and this completes the proof.    
\end{proof}

\begin{prop}\label{Mean_ergodic_thm} Let  $\CM$ be a finite von Neumann algebra with f.n trace $\tau$ and 
 $G$ be a l.c.s.c. amenable  group with (left) Haar measure $m$ and  $(F_n)$ be an admissible F\o lner sequence. Moreover, we assume that the action is ergodic (i.e. $\CM^G = \mathbb{C}1$).
 Then for every $x \in L^2(\CM,\tau)$,
\[
\A_n(x) \longrightarrow \tau(x)1 \quad \text{almost uniformly (a.u.)} \text{ and in } \norm{ \cdot }_2.
\]
\end{prop}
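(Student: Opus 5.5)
We split the statement into the $\|\cdot\|_2$ part and the almost uniform part. For the $\|\cdot\|_2$ convergence, we apply Theorem \ref{METReiter} to the unitary representation $g\mapsto u_g$ on $L^2(\CM,\tau)$ together with the Reiter sequence $d\mu_n=\chi_{F_n}\,dm/m(F_n)$. Note that Theorem \ref{METReiter} is stated for right Reiter sequences. If $(F_n)$ is a left F\o lner sequence, one either works with unimodularity (so that $F_n^{-1}$ gives the other-sided version) or redoes the same telescoping estimate on the appropriate side. Either way, $\A_n(x)=T_n\widehat{x}\to P\widehat{x}$ in $\|\cdot\|_2$, where $P$ is the projection onto $L^2(\CM,\tau)^G$. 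By Lemma \ref{Fix_point_density} and ergodicity, $L^2(\CM,\tau)^G=\overline{\C1}=\C\widehat 1$. Hence $P\widehat x=\langle \widehat x,\widehat 1\rangle\widehat 1=\tau(x)1$.

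For the almost uniform convergence, we use the standard density plus maximal inequality scheme, that is, a noncommutative Banach principle.

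\textbf{Step 1 (dense class).} Let $\CD=\C1+\mathrm{span}\{y-\alpha_s(y): y\in\CM,\ s\in G\}$. Since $(L^2)^G=\C\widehat1$ and $\CM$ is $\|\cdot\|_2$-dense, the orthocomplement computation in the proof of Theorem \ref{METReiter} shows that $\CD$ is $\|\cdot\|_2$-dense in $L^2(\CM,\tau)$. For $z=y-\alpha_s(y)$ we have the operator-norm estimate
\[
\|\A_n(z)\|_\infty\le \|y\|_\infty\,\frac{m(F_n\triangle F_ns)}{m(F_n)}\to0,
\]
using the F\o lner property on the correct side (again invoking unimodularity, or choosing $\alpha_s(y)$ versus $\alpha_{s^{-1}}(y)$ appropriately). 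Thus $\A_n(z)\to \tau(z)1$ uniformly for $z\in\CD$, and in particular almost uniformly.

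\textbf{Step 2 (maximal inequality).} For general $x\in L^2$, we apply Theorem \ref{Cadilac-Wang} with $p=2$. Write $x-z$ as a combination of four positive parts, each with $L^2$ norm at most $\|x-z\|_2$. The theorem gives $a_j\in L^2_+$ with $\A_n((x-z)_j)\le a_j$ for all $n$ and $\|a_j\|_2\lesssim\|x-z\|_2$. Then take a spectral projection $e=\chi_{[0,\lambda]}$ of $\sum a_j$. Chebyshev gives $\tau(1-e)\le \lambda^{-2}\|\sum a_j\|_2^2$, and we have $\sup_n\|e\,\A_n(x-z)\,e\|_\infty\le \lambda$. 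This yields a bilateral estimate.

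\textbf{Step 3 (Banach principle).} We choose $z_k\in\CD$ with $\|x-z_k\|_2\le 4^{-k}$, projections $e_k$ as in Step 2 with $\lambda_k=2^{-k}$ and $\tau(1-e_k)\lesssim 2^{-2k}$, and set $e=\bigwedge_{k\ge K}e_k$. This gives b.a.u. convergence of $\A_n(x)$ to $\tau(x)1$; the limit is identified by the $L^2$ convergence.

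\textbf{Main obstacle: upgrading b.a.u. to a.u.} The step I expect to be hardest is the upgrade from b.a.u. to a.u. The order-domination route naturally gives only two-sided control. For one-sided control we use the a.u. convergence result for $2\le p<\infty$ cited after Theorem \ref{Cadilac-Wang} (Cadilhac–Wang \cite{cadilhac_noncommutative_2019}). Alternatively, we use the standard trick
\[
\|\A_n(w)e\|_\infty^2=\|e\,\A_n(w)^*\A_n(w)\,e\|_\infty\le \|e\,\A_n(|w|^2)\,e\|_\infty,
\]
which follows from Kadison–Schwarz for the unital completely positive averaging map. Applying Step 2 to $|x-z|^2\in L^1$, via the weak-type part of Theorem \ref{Cadilac-Wang}, then gives a one-sided estimate. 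With that, the same Banach-principle argument yields almost uniform convergence.
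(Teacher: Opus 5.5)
Your argument is correct, apart from one caveat about sides discussed below. For the $L^2$ statement it is essentially the paper's. The paper quotes the mean ergodic theorem and checks that the limit $\bar x$ is $G$-invariant. It then uses ergodicity through Lemma~\ref{Fix_point_density}, together with $\tau(\A_n(x))=\tau(x)$, to get $\bar x=\tau(x)1$; this is your identification $P\widehat x=\langle \widehat x,\widehat 1\rangle\widehat 1$.

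The routes differ in the almost uniform part. The paper runs no Banach principle. It imports a.u.\ convergence of $\A_n(x)$ for $x\in L^p$, $2\le p<\infty$, directly from Cadilhac--Wang (your ``alternative''). It then identifies the a.u.\ limit with the $L^2$ limit by uniqueness of limits in the measure topology.

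Your main route derives a.u.\ convergence from only the maximal inequalities of Theorem~\ref{Cadilac-Wang}. It combines three ingredients:
\begin{enumerate}
\item the explicit class $\mathbb C1+\mathrm{span}\{y-\alpha_s(y): y\in\CM,\ s\in G\}$, which is dense since ergodicity gives $L^2(\CM,\tau)^G=\mathbb C\widehat 1$, and on which $\A_n$ converges in operator norm;
\item the one-sided bound $\|\A_n(w)e\|_\infty^2\le\|e\,\A_n(|w|^2)\,e\|_\infty$;
\item the weak type $(1,1)$ estimate applied to $|w|^2\in L^1$.
\end{enumerate}
This is sound and is the standard way such convergence theorems are derived from maximal inequalities. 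It makes the proof self-contained relative to the maximal inequality and produces the limit $\tau(x)1$ directly. The paper's argument is two lines but rests entirely on the external convergence theorem. Once you have the one-sided estimate, your Step~2 (which as written gives $4\lambda$, not $\lambda$) and the b.a.u.\ stage are redundant.

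The caveat is that the fixes you offer for the side issue do not work. Unimodularity makes $(F_n^{-1})$ right F\o lner, but the averages are taken over $F_n$. Also, $\A_n(\alpha_s(y))$ always involves $\alpha_{gs}$, so replacing $s$ by $s^{-1}$ still produces a right translate of $F_n$. Your coboundary estimate and your use of Theorem~\ref{METReiter} genuinely need $m(F_ns\triangle F_n)/m(F_n)\to 0$.

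This is not cosmetic, and the paper's proof tacitly needs the same condition: the mean ergodic theorem and the Cadilhac--Wang theorem it cites must be read in the convention that makes these averages converge. If $(F_n)$ is only left F\o lner, i.e.\ $m(KF_n\triangle F_n)/m(F_n)\to0$, the proposition can fail. For example:
\begin{itemize}
\item let the finitary permutation group $S_\infty$ act on $L^\infty(\{0,1\}^{\mathbb N})$, with product measure, by $\alpha_\sigma(\omega_j)=\omega_{\sigma(j)}$; this is ergodic by Hewitt--Savage;
\item take $F_n=S_n\sigma_n$ with $\sigma_n$ the transposition of $1$ and $n+1$, so $KF_n=F_n$ for every finite $K$ and large $n$;
\item let $x=\omega_1-\tfrac12$.
\end{itemize}
Then $\A_n(x)=\omega_{n+1}-\tfrac12$, which has $\|\cdot\|_2=\tfrac12$ and does not tend to $\tau(x)1=0$. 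So you should state the right-sided (or two-sided) F\o lner assumption explicitly rather than claim it can be arranged.
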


\begin{proof} 
By the mean ergodic theorem for amenable group actions,
\[
\A_n(x) \xrightarrow{ \norm{\cdot}_2} \bar{x} \quad \text{in } L^2(\CM, \tau),
\]
where $\bar{x}$ is the orthogonal projection of $x$ onto the invariant subspace
\[
L^2(\CM,\tau)^G = \{ y \in L^2(\CM, \tau): \alpha_g(y)=y ,~\text{for all } g \in G \}.
\]
For $h \in G$,
\[
\alpha_h(\A_n(x)) 
= \frac{1}{m(F_n)} \int_{F_n} \alpha_{hg}(x)\, dm(g)
= \frac{1}{m(F_n)} \int_{hF_n} \alpha_{g}(x)\, dm(g).
\]
Hence
\begin{align*}
 \norm{ \alpha_h(\A_n(x)) - \A_n(x)}_2
&= \norm{\frac{1}{m(F_n)} \left( \int_{hF_n} - \int_{F_n} \right)\alpha_g(x)\, dm(g)}_2\\
&= \norm{  \frac{1}{m(F_n)} \left( \int_{(hF_n\setminus F_n)\cup (F_n\cap hF_n)} - \int_{(F_n\setminus hF_n) \cup ( F_n \cap hF_n)} \right)\alpha_g(x)\, dm(g)}_2\\
&\leq \|x\|_2\frac{m(hF_n \triangle F_n)}{m(F_n)} \xrightarrow{ n \to \infty } 0
\end{align*}
Therefore, we have $ \alpha_h(\bar x) =\bar x $ for all $ h \in G$.
Thus $\bar{x} \in L^2(\CM,\tau)^G$. Ergodicity of $\alpha$ (see Lemma \ref{Fix_point_density})  implies that $\bar{x}=c(x)1$. Further, since the action is  $\tau$-invariant,
\[
\tau(\A_n(x)) 
= \frac{1}{m(F_n)} \int_{F_n} \tau(\alpha_g(x))\, dm(g)
= \tau(x).
\]
Taking limits, $ \tau(\bar{x}) = \tau(x).$
Since $\bar{x} = c(x) 1$, it follows that $ c(x) = \tau(x),$ 
hence $ \bar{x} = \tau(x)\, 1.$
 We also know that the average $\A_n(x)$ converges in a.u. to some $\tilde{x} \in L^2(\CM,\tau)$. Almost uniform convergence implies convergence in measure, and since $L^2$-convergence also implies convergence in measure, uniqueness of the limit gives $ \tilde{x} = \bar{x}.$ Thus, $ \A_n(x) \longrightarrow \tau(x)1, ~~\text{ in a.u}.$
\end{proof}

We begin by recalling the following definition.
\begin{defn}
      Let $(X, \|\cdot\|)$ be a normed space. A sequence $a_n : X \to L^0(\CM, \tau)$ of additive maps is called \textit{bilaterally uniformly equicontinuous in measure (b.u.e.m.)} at $0 \in X$ if for every $\varepsilon > 0$, $\delta > 0$ there exists $\gamma > 0$ such that for every $x \in X$ with $\|x\| < \gamma$ there is $e_x \in P(\mathcal{M})$ for which
\[
\tau(e_x^\perp) \leq \varepsilon \quad \text{and} \quad \sup_n \|e_x a_n(x) e_x\|_\infty \leq \delta.
\]
  \end{defn}

Now we first derive the following theorem from Theorem~\ref{Cadilac-Wang}.
\begin{thm}\label{maximal_inequality}
    Let $ x \in L^p(\CM, \tau)$ for $ 1\leq p  < \infty $, and $\lambda > 0$, then there exists a projection $e \in \CP(\CM)$ such that 
    $$ \sup_n \norm{ e\A_n( x) e } \lesssim \lambda ~~\text{ and } \tau( 1-e)^{ \frac{1}{p}}  \lesssim\frac{ \norm{ x}_p}{ \lambda}.$$
 Moreover, for every $\varepsilon > 0$ and $\delta > 0$, there exists $\gamma > 0$ such that for every $x \in L^p( \CM, \tau)$ with $\|x\|_p < \gamma$, there exists $e_x \in \CP(\CM)$ for which
\[
\tau\bigl(e_x^\perp\bigr) \leq \varepsilon
\quad \text{and} \quad
\sup_n \bigl\| e_x \A_n(x, \Gamma) e_x \bigr\|_\infty \lesssim \delta,  ~~\text{ for all } \Gamma \in \S.
\]
\end{thm}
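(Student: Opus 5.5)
The plan is to reduce everything to positive elements, where Theorem \ref{Cadilac-Wang} gives an order majorant, and to exploit the fact that averages of positive elements against weights in $[0,1]$ are dominated by the unweighted average $\A_n$. First decompose $x\in L^p(\CM,\tau)$ as $x=x_1-x_2+i(x_3-x_4)$ with $x_j\ge 0$ and $\|x_j\|_p\le \|x\|_p$. Take real and imaginary parts $\frac{x+x^*}{2},\frac{x-x^*}{2i}$, then positive and negative parts via the functional calculus. By linearity of $\A_n$ it suffices to control each $\A_n(x_j)$ with one common projection.

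\emph{The case $1<p<\infty$.} By the second part of Theorem \ref{Cadilac-Wang} there are $a_j\in L^p(\CM,\tau)_+$ with $\A_n(x_j)\le a_j$ for all $n$ and $\|a_j\|_p\lesssim\|x_j\|_p\le\|x\|_p$. Let $e_j:=e_{[0,\lambda]}(a_j)$ and $e:=\bigwedge_{j=1}^4 e_j$. Since $e\le e_j$, we get $0\le e\A_n(x_j)e\le e a_j e = e\,(e_ja_je_j)\,e\le \lambda e$, hence $\|e\A_n(x_j)e\|\le\lambda$ and $\sup_n\|e\A_n(x)e\|\le 4\lambda$. By Chebyshev, $\tau(1-e_j)\le \lambda^{-p}\tau(a_j^p)\lesssim \|x\|_p^p/\lambda^p$. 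Then $\tau(1-e)\le\sum_j\tau(1-e_j)$ gives $\tau(1-e)^{1/p}\lesssim\|x\|_p/\lambda$.

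\emph{The case $p=1$.} Apply the first part of Theorem \ref{Cadilac-Wang} to each $x_j$ and intersect the four projections. This yields the same weak-type estimate with the same bookkeeping.

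For the ``moreover'' part, fix $\Gamma\in\S$ and write the scalar weight as $c(g)=\phi(\pi(g))u$ (interpreted as the complex number $\phi(\pi(g)u)$), so that $|c(g)|\le\|\phi\|_\infty=1$. Decompose $c=c_1-c_2+i(c_3-c_4)$ with measurable $0\le c_k\le 1$. For positive $y$, each $\frac{1}{m(F_n)}\int_{F_n}c_k(g)\alpha_g(y)\,dm(g)$ lies between $0$ and $\A_n(y)$. Hence for the projection $e$ built from $x$ above (which does not depend on $\Gamma$), $0\le e(\cdot)e\le e\A_n(x_j)e\le\lambda e$. Summing the sixteen terms gives $\sup_n\|e\A_n(x,\Gamma)e\|\le 16\lambda$ uniformly in $\Gamma$.

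Given $\varepsilon,\delta>0$, take $\lambda=\delta$ and choose $\gamma$ so that $(C\gamma/\delta)^p\le\varepsilon$, where $C$ is the implicit constant. Then any $x$ with $\|x\|_p<\gamma$ gets $e_x:=e$ with $\tau(e_x^\perp)\le\varepsilon$ and $\sup_n\|e_x\A_n(x,\Gamma)e_x\|\lesssim\delta$ for all $\Gamma\in\S$.

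The main point requiring care is the order-domination step. One must check that the weighted Bochner integral of $\alpha_g(y)$ against $c_k\in[0,1]$ is a positive operator dominated by $\A_n(y)$. This follows by pairing with positive elements of $\CM$ (for $L^p$, via duality with $L^{p'}_+$) and using positivity of each $\alpha_g(y)$. One must also check that compression by $e$ preserves the order, which is immediate. Everything else is routine bookkeeping of constants.
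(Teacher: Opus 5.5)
Your proposal is correct and follows essentially the same route as the paper. You split $x$ into four positive parts, obtain projections from Theorem~\ref{Cadilac-Wang}, take their infimum, and dominate the weighted averages by $\A_n(x_j)$ using nonnegative weights; the paper instead uses the shifted weights $1+\mathrm{Re}\,\phi$ and $1+\mathrm{Im}\,\phi$ with values in $[0,2]$ and then subtracts $\A_n(x_j)$, which is equivalent to your splitting into positive and negative parts, and your choice $e_j=e_{[0,\lambda]}(a_j)$ (spectral projection of the majorant rather than of $x$, as the paper literally writes) is the correct way to get the first part for $1<p<\infty$.
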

\begin{proof}
The first part of the theorem follows from standard argument. Indeed, for $p=1$, nothing to show  and for $ p> 1$, let $e = \chi_{ [0, \lambda]}(x) $, then it is straightforward to check the following

$$ 
\sup_n \norm{ e\A_n( x) e } \lesssim \lambda ~~\text{ and } \tau( 1-e)^{ \frac{1}{p}}  \lesssim\frac{ \norm{ x}_p}{ \lambda}.
$$
Now let $ x \in L^p(\CM, \tau)$ and note that 
     $x$ can be written as $x=x_1-x_2+i(x_3-x_4)$ where $x_j \geq 0$ and $\|x_j\|_p \leq \|x\|_p $ for all $j\in \{1,2,3,4\}$.  By the first part of the theorem 
    there exists $e_j \in \CP(\CM)$ such that 
    \begin{align*}
        \displaystyle\sup_{n\geq1}\|e_j\A_n(x_j)e_j\|_\infty \leq \lambda ~~\text{ and }~~\ \tau(1-e_j)\lesssim \frac{\|x_j\|_p^p}{\lambda^p} 
    \end{align*}
  Hence we have 
    \begin{align*}
        \sup_{n\geq 1} \|e_j\A_n(x_j)e_j\|^p \leq  \sup_{n\geq 1} \|e_j\A_n(x_j^p)e_j\| \leq \lambda^p
        \implies  \sup_{n\geq 1} \|e_j\A_n(x_j)e_j\| \leq \lambda.
    \end{align*}
Define $e:=\wedge_{j=1}^4e_j$, then we note that  $\tau(1-e)\lesssim \frac{\|x\|_p^p}{\lambda^p}$ and  $\displaystyle\sup_{n\geq1}\|e\A_n(x_j)e\| \leq \lambda$ for all $ j \in \N$.
    Now consider the following;
    \begin{align*}
        \A_n^{(R)}(x_j,\Gamma):= \frac{1}{m(F_n)}\int_{F_n}\alpha_g(x_j)Re(\phi(\pi(g)u)) dm(g) + \A_n(x_j, \Gamma)\\
        \A_n^{(I)}(x_j,\Gamma):= \frac{1}{m(F_n)}\int_{F_n}\alpha_g(x_j)Im(\phi(\pi(g)u)) dm(g) + \A_n(x_j, \Gamma).
    \end{align*}
    Since $1+Re(\phi(v)) \leq 1+\|\phi\|_\infty \leq 2$ and $x_j \geq 0$,  we have  $ \|e\A_n^{(R)}(x_j,\Gamma)e\| \leq 2 \|e\A_n(x_j)e\|$
  and this implies  $  \|e\A_n^{(R)}(x_j,\Gamma)e\| \lesssim\lambda.$
    Similarly we get $  \|e\A_n^{(I)}(x_j,\Gamma)e\| \lesssim \lambda.$
Hence,  we have
\begin{align*}
    \sup_{n\geq 1}\|e\A_n(x_j, \Gamma)e\|=\sup_{n\geq 1}\|e(\A_n^{(R)}(x_j,\Gamma) +i\A_n^{(I)}(x_j,\Gamma) - \A_n(x_j)- i\A_n(x_j))e\| \lesssim \lambda.
\end{align*}
This proves that
    \begin{align*}
      \sup_{n\geq 1} \| e\A_n(x,\Gamma)e\|=\sup_{n\geq 1}\|e(\A_n(x_1,\Gamma)-\A_n(x_2,\Gamma)+i\A_n(x_3,\Gamma)-i\A_n(x_4,\Gamma))e\| \lesssim \lambda.
    \end{align*}
By the first part there exists $C>0, e_x\in \CP(\CM)$ such that
    \begin{align*}
      \tau(1-e_x)\leq C^p\frac{\|x\|_p^p}{\lambda^p} ~~\text{and} ~~ \sup_{n\geq1}\|e
    _x \A_n(x,\Gamma)e_x\| \lesssim \lambda, ~~ \text{for all}~\lambda>0.
    \end{align*}
Given $\epsilon, \delta >0$, we can choose $0<\gamma <\epsilon^{1/p} \frac{\delta}{C}$.
Then for all $x \in L^p(\CM, \tau)$ with $\|x\|_p< \gamma$, there exists $e_x \in \CP(\CM)$ for which we have 
\begin{align*}
    \tau(1-e_x)\leq \epsilon ~~\text{and}~~ \sup_{n\geq 1}\|e_x\A_n(x,\Gamma)e_x\| \lesssim \delta.
\end{align*}
This completes the proof.
\end{proof}

\begin{rem}
    Note that the projection $e_x$ in the proof of Theorem \ref{maximal_inequality} does not depend on $\Gamma \in \S$. Consequently $\A_n(x,\Gamma)$ is b.u.e.m at $0$.
\end{rem}
Analogous to the classical case, the following Banach principle provides a connection between maximal inequalities and bilateral almost uniform 
(b.a.u.) convergence, and the proof can be found in \cite[Theorem 2.1]{Lit2014}.

\begin{thm}\label{Banach_Principle} Let $ 1 \leq p < \infty $, then the following set 
$$ 
\CC = \{ x \in L^p(\CM, \tau) : \A_n(x, \Gamma) \text{ converges in b.a.u uniformly in } \Gamma \in \S \}
$$
is a closed set in $ L^p(\CM, \tau)$. 
\end{thm}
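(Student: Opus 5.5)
The plan is to run the standard noncommutative Banach-principle argument, with the b.u.e.m.\ property supplied by Theorem~\ref{maximal_inequality}, which holds uniformly in $\Gamma\in\S$. Let $x_k\in\CC$ with $x_k\to x$ in $L^p(\CM,\tau)$, and fix $\varepsilon>0$. We want a projection $e$ with $\tau(1-e)\le\varepsilon$ such that $(e\A_n(x,\Gamma)e)_n$ is Cauchy in operator norm, uniformly in $\Gamma$. Completeness of $\CM$ (together with the uniformity in $\Gamma$) then gives a uniform limit for each $\Gamma$. A limit $\hat x_\Gamma\in L^0$ with $e(\A_n(x,\Gamma)-\hat x_\Gamma)e\to0$ is obtained as in \cite{Lit2014}, by gluing the limits over an increasing family of projections; this yields b.a.u.\ convergence uniformly in $\Gamma$.

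First, the choice of $e$. For each $j\in\N$, apply the second part of Theorem~\ref{maximal_inequality} with $\varepsilon_j=\varepsilon 2^{-j-1}$ and $\delta_j=1/j$. This gives $\gamma_j>0$. Pass to a subsequence $k_j$ with $\|x-x_{k_j}\|_p<\gamma_j$. This produces projections $f_j$ with $\tau(1-f_j)\le\varepsilon2^{-j-1}$ and
\[
\sup_n\sup_{\Gamma\in\S}\|f_j\A_n(x-x_{k_j},\Gamma)f_j\|\lesssim 1/j .
\]
Since $x_{k_j}\in\CC$, there are projections $g_j$ with $\tau(1-g_j)\le\varepsilon2^{-j-1}$ such that $g_j\A_n(x_{k_j},\Gamma)g_j$ converges in norm, uniformly in $\Gamma$, and is therefore uniformly Cauchy. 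Set $e=\bigwedge_j(f_j\wedge g_j)$. Then $\tau(1-e)\le\varepsilon$.

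Second, the Cauchy estimate. Since $e\le f_j$ and $e\le g_j$, we have $e=ef_j=f_je$ and likewise for $g_j$. Hence
\[
\|e(\A_n-\A_m)(x,\Gamma)e\|\le 2\sup_{n}\|f_j\A_n(x-x_{k_j},\Gamma)f_j\|+\|g_j(\A_n-\A_m)(x_{k_j},\Gamma)g_j\|.
\]
The first term is $\lesssim 2/j$ uniformly in $n$ and $\Gamma$. The second term tends to $0$ uniformly in $\Gamma$ as $n,m\to\infty$. So the sequence is uniformly Cauchy.

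The main obstacle is the gluing step, which promotes Cauchy-ness of the compressions $e\A_n e$ for every $\varepsilon$ to a genuine b.a.u.\ limit in $L^0$. The limits obtained for different $\varepsilon$ must be shown compatible, and one must produce an element $\hat x_\Gamma\in L^0(\CM,\tau)$. I would handle this as in \cite[Theorem 2.1]{Lit2014}. First show that $\A_n(x,\Gamma)$ is Cauchy in measure, since b.a.u.\ Cauchy implies Cauchy in measure. Completeness of $L^0$ \cite{nelson1974notes} then gives a limit $\hat x_\Gamma$, and the compressions $e(\A_n-\hat x_\Gamma)e$ go to zero in norm because their norm limit agrees with the measure limit.
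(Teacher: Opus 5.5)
Your proposal is correct and is essentially the paper's argument. The paper simply invokes Litvinov's Banach principle \cite[Theorem 2.1]{Lit2014} together with the $\Gamma$-uniform b.u.e.m.\ property from Theorem~\ref{maximal_inequality} (see the remark following it), and you have written out that standard proof, including the passage from b.a.u.\ Cauchy to a b.a.u.\ limit via completeness of $L^0(\CM,\tau)$ in measure; the last step holds because, for fixed $n$, $e(\A_n-\A_m)(x,\Gamma)e$ is norm-Cauchy in $m$ and converges in measure to $e(\A_n(x,\Gamma)-\hat x_\Gamma)e$, so the two limits coincide and the bound $\eta$ passes to the limit with $N$ independent of $\Gamma$.
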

Thus, to prove the b.a.u convergence of the sequence $ ( \A_n(x, \Gamma))$ for $ x \in L^p(\CM, x )$ for $ 1 \leq p < \infty $ uniformly for all $ \Gamma \in \S$, it remains to verify such convergence on a dense set of $ L^p(\CM, \tau)$.

\medskip
\noindent
The van der Corput is often used to establish the convergence of various ergodic averages, both in classical and noncommutative settings. 
Here we state and prove a  noncommutative version of this lemma for a bounded function $f:G \rightarrow \CM$, where $G$ is an amenable group and $\CM$ is a von Neumann algebra.
\begin{lem}[A van der Corput inequality for amenable groups]\label{Lemma_vand}
Let $G$ be a l.c.s.c. amenable group equipped with a right Haar measure $m$, and let $(F_n)$ be a right F\o lner sequence in $G$ and $\CM$ be von Neumann algebra. Let $f : G \to \CM$ be a bounded measurable function. Then for every $n, k \in \mathbb{N}$, the following estimate holds:
\begin{align}\label{Vand_inequality}
\norm{
\frac{1}{m(F_n)} \int_{F_n} f(g)\, dm(g)
}^2 &\leq 
\frac{1}{m(F_k)^2} \int_{F_k} \int_{F_k}
\norm{ \left( \frac{1}{m(F_n)} \int_{F_n}
f( g)^* ~ f( gh_{1}^{-1}h_2) dm(g) \right) }
dm(h_1)\, dm(h_2) \\
&~~~+ 3\|f\|_\infty^2 \sup_{h \in F_k}
\frac{m(F_nh \triangle F_n)}{m(F_n)}  +   \|f\|^2_\infty \left( \sup_{h \in F_k}
\frac{m(F_nh \triangle F_n)}{m(F_n)} \right)^2.
\end{align}

\end{lem}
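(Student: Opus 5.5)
Write $A_n := \frac{1}{m(F_n)}\int_{F_n} f(g)\,dm(g)$ and $\varepsilon_{n,k} := \sup_{h\in F_k} \frac{m(F_nh\triangle F_n)}{m(F_n)}$. The plan is the classical van der Corput argument: first replace $A_n$ by a doubly averaged version built from right translates, then apply an operator Cauchy--Schwarz inequality, and finally expand the resulting square.

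\textbf{Step 1: shift by $h$ and average over $F_k$.} For fixed $h\in G$, right invariance of $m$ (the substitution $g\mapsto gh$) gives
\[
\frac{1}{m(F_n)}\int_{F_n} f(gh)\,dm(g)=\frac{1}{m(F_n)}\int_{F_nh} f(g)\,dm(g).
\]
Hence this differs from $A_n$ in norm by at most $\|f\|_\infty \frac{m(F_nh\triangle F_n)}{m(F_n)}$. Averaging over $h\in F_k$ and using Fubini, we get $A_n = B_{n,k} + R$, where
\[
B_{n,k}:=\frac{1}{m(F_n)}\int_{F_n} \Phi(g)\,dm(g),\qquad \Phi(g):=\frac{1}{m(F_k)}\int_{F_k} f(gh)\,dm(h),
\]
and $\|R\|\le \|f\|_\infty\varepsilon_{n,k}$. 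Consequently
\[
\|A_n\|^2\le \|B_{n,k}\|^2+2\|B_{n,k}\|\,\|R\|+\|R\|^2\le \|B_{n,k}\|^2+2\|f\|_\infty^2\varepsilon_{n,k}+\|f\|_\infty^2\varepsilon_{n,k}^2,
\]
since $\|B_{n,k}\|\le\|f\|_\infty$. The error budget in the statement is $3\|f\|_\infty^2\varepsilon_{n,k}+\|f\|_\infty^2\varepsilon_{n,k}^2$, so one extra $\|f\|_\infty^2\varepsilon_{n,k}$ remains available for Step 3.

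\textbf{Step 2: operator Cauchy--Schwarz.} The noncommutative ingredient is the inequality $\bigl(\int \Phi\,d\mu\bigr)^*\bigl(\int\Phi\,d\mu\bigr)\le \int \Phi^*\Phi\,d\mu$ for a probability measure $\mu$; this is Kadison--Schwarz for the unital completely positive map $\Phi\mapsto\int\Phi\,d\mu$, or it can be checked directly by expanding $\int (\Phi-\bar\Phi)^*(\Phi-\bar\Phi)\,d\mu\ge0$. Using it together with the C$^*$-identity,
\[
\|B_{n,k}\|^2=\|B_{n,k}^*B_{n,k}\|\le\Bigl\|\frac{1}{m(F_n)}\int_{F_n}\Phi(g)^*\Phi(g)\,dm(g)\Bigr\|.
\]
Expanding $\Phi(g)^*\Phi(g)$ gives
\[
\frac{1}{m(F_k)^2}\int_{F_k}\int_{F_k}\frac{1}{m(F_n)}\int_{F_n} f(gh_1)^*f(gh_2)\,dm(g)\,dm(h_1)\,dm(h_2).
\]
Moving the norm inside the $h_1,h_2$ integrals by the triangle inequality for Bochner integrals leaves, for each pair $(h_1,h_2)$, the inner average
\[
\frac{1}{m(F_n)}\int_{F_n} f(gh_1)^*f(gh_2)\,dm(g).
\]

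\textbf{Step 3: undo the shift by $h_1$.} Substitute $g'=gh_1$. The domain $F_n$ becomes $F_nh_1$ and the integrand becomes $f(g')^*f(g'h_1^{-1}h_2)$. Replacing $F_nh_1$ by $F_n$ costs at most $\|f\|_\infty^2\varepsilon_{n,k}$, since $h_1\in F_k$. This produces exactly the main double-average term in the statement plus a remainder of at most $\|f\|_\infty^2\varepsilon_{n,k}$, which accounts for the third copy of $\varepsilon_{n,k}$.

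The main obstacle I expect is measurability and Bochner integrability of $\CM$-valued functions when $\CM$ is nonseparable in norm. I would handle this by interpreting all integrals weakly, i.e.\ as $\sigma$-weak integrals. Norms can then be estimated by duality via $\|X\|=\sup|\omega(X)|$ over normal states $\omega$, and the positivity used in Step 2 is likewise tested on normal states. Boundedness of $f$ ensures that all these integrals exist.
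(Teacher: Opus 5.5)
Your proposal is correct and follows essentially the same route as the paper's proof. You compare $A_n$ with the doubly averaged right translates (cost $2\|f\|_\infty^2\varepsilon_{n,k}+\|f\|_\infty^2\varepsilon_{n,k}^2$), apply the operator Jensen/Kadison--Schwarz inequality $B_{n,k}^*B_{n,k}\le \frac{1}{m(F_n)}\int_{F_n}\Phi(g)^*\Phi(g)\,dm(g)$, and undo the shift by $h_1$ at a further cost of $\|f\|_\infty^2\varepsilon_{n,k}$. Your version is also slightly more careful than the paper's: you use the triangle inequality where the paper writes an equality when moving the norm inside the $h_1,h_2$ integrals, and you address the measurability of the $\CM$-valued integrals, which the paper passes over.
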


\begin{proof}
We fix a  $h \in F_k$. We first estimate
\begin{align*}
\norm{ \frac{1}{m(F_n)} \int_{F_n} f(gh)\, dm(g) }
&= \norm{ \frac{1}{m(F_n)} \int_{F_nh} f(g)\, dm(g) } \\
&=\norm{ \frac{1}{m(F_n)} \int_{F_n} f(g) ~ dm(g) +  \frac{1}{m(F_n)} \int_{F_nh} f(g) ~ dm(g)  - \frac{1}{m(F_n)} \int_{F_n} f(g) ~ dm(g)}\\
&\leq \norm{ \frac{1}{m(F_n)} \int_{F_n} f(g)\, dm(g) }\\& \qquad + \norm{ \frac{1}{m(F_n)} \int_{F_nh \cap F_n} f(g)\, dm(g)
     + \frac{1}{m(F_n)} \int_{F_nh \setminus F_n} f(g)\, dm(g) }\\
&\leq 
 \norm{ \frac{1}{m(F_n)} \int_{F_n} f(g)\, dm(g) }  + \|f\|_\infty \frac{m(F_nh \triangle F_n)}{m(F_n)} \\
 &\leq 
 \norm{ \frac{1}{m(F_n)} \int_{F_n} f(g)\, dm(g) }  + \sup_{ h \in F_k}\|f\|_\infty \frac{m(F_nh \triangle F_n)}{m(F_n)} .
\end{align*}
Thus, we obtain 
\begin{equation}\label{4.1}
\norm{ \frac{1}{m(F_n)} \int_{F_n} f(gh)\, dm(g) } \leq \norm{ \frac{1}{m(F_n)} \int_{F_n} f(g)\, dm(g) }  + \sup_{ h \in F_k}\|f\|_\infty \frac{m(F_nh \triangle F_n)}{m(F_n)} .
\end{equation}

\medskip
\noindent
Next, we compare the average of $f$ with its averaged translates:
\begin{align*}
\norm{ \frac{1}{m(F_n)} \int_{F_n} f(g)\, dm(g) }
&\leq \norm{  \frac{1}{m(F_n)} \int_{F_n} f(g)\, dm(g)
- \frac{1}{m(F_n)m(F_k)} 
\int_{F_n} \int_{F_k} f(gh)\, dm(h)\, dm(g) } \\
& \qquad+ \norm{  \frac{1}{m(F_n)m(F_k)} 
\int_{F_n} \int_{F_k} f(gh)\, dm(h)\, dm(g) }\\
& \leq  \|f\|_\infty \sup_{h \in F_k}
\frac{m(F_nh \triangle F_n)}{m(F_n)} + \norm{  \frac{1}{m(F_n)m(F_k)} 
\int_{F_n} \int_{F_k} f(gh)\, dm(h)\, dm(g) }.
\end{align*}
Therefore, by squaring both sides, we obtain
\begin{align*}
\norm{ \frac{1}{m(F_n)} \int_{F_n} f(g)\, dm(g) }^2
&\leq \norm{ \frac{1}{m(F_n)m(F_k)} 
\int_{F_n} \int_{F_k} f(gh)\, dm(h)\, dm(g)}^2 \\& \qquad+ 2 \|f\|_\infty \left( \sup_{h \in F_k}
\frac{m(F_nh \triangle F_n)}{m(F_n)} \right)\norm{ \frac{1}{m(F_n)m(F_k)} 
\int_{F_n} \int_{F_k} f(gh)\, dm(h)\, dm(g)} \\& \qquad+  \|f\|^2_\infty \left( \sup_{h \in F_k}
\frac{m(F_nh \triangle F_n)}{m(F_n)} \right)^2\\
& \leq  \norm{ \frac{1}{m(F_n)m(F_k)} 
\int_{F_n} \int_{F_k} f(gh)\, dm(h)\, dm(g)}^2 \\&\qquad+ 2 \norm{f}^{2}_{\infty} \left( \sup_{h \in F_k}
\frac{m(F_nh \triangle F_n)}{m(F_n)} \right) \\& \qquad +  \|f\|^2_\infty \left( \sup_{h \in F_k}
\frac{m(F_nh \triangle F_n)}{m(F_n)} \right)^2.
\end{align*}

\medskip
\noindent
Now we observe that 
\begin{align*}
&\norm{
\frac{1}{m(F_n)} \int_{F_n}
\frac{1}{m(F_k)} \int_{F_k} f(gh)\, dm(h)\, dm(g)
}^2\\
&\leq \norm{  \frac{1}{m(F_n)} \int_{F_n} \abs{
\frac{1}{m(F_k)} \int_{F_k} f(gh)\, dm(h)}^2 dm(g) } ~~\text{ by Jensen Inequality}\\
&= \norm{\frac{1}{m(F_n)} \int_{F_n}
\frac{1}{m(F_k)^2} \int_{F_k} \int_{F_k}
f( gh_1)^* f( gh_2) ~ dm(h_1)\, dm(h_2)\, dm(g) } \\
&= \frac{1}{m(F_k)^2} \int_{F_k} \int_{F_k}
\norm{ \left( \frac{1}{m(F_n)} \int_{F_n}
f( gh_1)^* ~ f( gh_2) dm(g) \right) }
dm(h_1)\, dm(h_2)\\
&= \frac{1}{m(F_k)^2} \int_{F_k} \int_{F_k}
\norm{ \left( \frac{1}{m(F_n)} \int_{F_nh_1}
f( g)^* ~ f( gh_{1}^{-1}h_2) dm(g) \right) }
dm(h_1)\, dm(h_2)\\
&\leq  \frac{1}{m(F_k)^2} \int_{F_k} \int_{F_k}
\norm{ \left( \frac{1}{m(F_n)} \int_{F_n}
f( g)^* ~ f(gh_{1}^{-1}h_2) dm(g) \right) }
dm(h_1)\, dm(h_2) + \|f\|_\infty^2 \sup_{h \in F_k}
\frac{m(F_nh \triangle F_n)}{m(F_n)}.
\end{align*}
Combining these estimates,  we conclude that
\begin{align*}
\norm{
\frac{1}{m(F_n)} \int_{F_n} f(g)\, dm(g)
}^2 &\leq 
\frac{1}{m(F_k)^2} \int_{F_k} \int_{F_k}
\norm{ \left( \frac{1}{m(F_n)} \int_{F_n}
f( g)^* ~ f( gh_{1}^{-1}h_2) dm(g) \right) }
dm(h_1)\, dm(h_2) \\
&~~~+ 3\|f\|_\infty^2 \sup_{h \in F_k}
\frac{m(F_nh \triangle F_n)}{m(F_n)}  +   \|f\|^2_\infty \left( \sup_{h \in F_k}
\frac{m(F_nh \triangle F_n)}{m(F_n)} \right)^2.
\end{align*}

\end{proof}
\begin{rem}
    The above Lemma can also be proved for left amenable groups using left F\o lner sequences. However, in our main theorem, we assume that $G$ is unimodular. Therefore, this difference will not cause any problem.
\end{rem}
\noindent We are now in a position to identify a dense subset on which the averages  $ \A_n(x,\Gamma)$
converge a.u., uniformly over all $\Gamma \in \S$. More precisely, we shall show that for every
\[
x \in ( L_c^2(\CM,\tau)) + (\CM \cap L_{wm}^2(\CM,\tau)),
\]
the averages $ \A_n(x,\Gamma)$
converge a.u. and uniformly in $\Gamma \in \S$. We begin with the following lemma.
\begin{lem}
$\alpha_g(L^{2}_{wm}(\CM,\tau)) \subset L^{2}_{wm}(\CM,\tau)$, for all $  g\in G $.
\end{lem}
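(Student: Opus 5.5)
The cleanest route is to show that the compact part is $G$-invariant and then pass to orthogonal complements. This uses only that each $\alpha_g$ extends to a unitary $u_g$ on $L^2(\CM,\tau)$, which the paper has already recorded.

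First, I will check that $u_h(L^2_c(\CM,\tau))\subseteq L^2_c(\CM,\tau)$ for every $h\in G$. Take $\xi\in L^2_c(\CM,\tau)$, so the orbit $\{u_g\xi: g\in G\}$ is relatively compact (totally bounded). The orbit of $u_h\xi$ is
\[
\{u_g u_h\xi : g\in G\}=\{u_{gh}\xi: g\in G\}=\{u_{g'}\xi: g'\in G\},
\]
because $g\mapsto gh$ is a bijection of $G$. So it is literally the same set as the orbit of $\xi$, and $u_h\xi\in L^2_c(\CM,\tau)$. The same argument with $h^{-1}$ gives $u_h^*(L^2_c)=u_{h^{-1}}(L^2_c)\subseteq L^2_c$.

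Second, for $\eta\in L^2_{wm}(\CM,\tau)=L^2_c(\CM,\tau)^{\perp}$ and any $\xi\in L^2_c(\CM,\tau)$, unitarity gives
\[
\langle u_h\eta,\xi\rangle=\langle \eta,u_h^{*}\xi\rangle=\langle \eta,u_{h^{-1}}\xi\rangle=0,
\]
since $u_{h^{-1}}\xi\in L^2_c$. Hence $u_h\eta\perp L^2_c$, that is, $\alpha_h(\eta)\in L^2_{wm}(\CM,\tau)$.

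As a cross-check, one can argue directly from the averaged characterization of $L^2_{wm}$ instead. This gives $|\langle u_{g}u_h\eta,\zeta\rangle|=|\langle u_g\eta,u_h^{*}\ldots\rangle|$ and needs a F\o lner shift $F_nh$ versus $F_n$. That route is messier and depends on the sidedness of the F\o lner sequence, so I will use the orthogonal-complement argument.

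I do not expect a real obstacle. The only point needing care is that $u_{h^{-1}}=u_h^{*}$ on $L^2$, which follows because $u_h$ is the unitary extension of the trace-preserving automorphism $\alpha_h$ and $\alpha$ is a homomorphism.
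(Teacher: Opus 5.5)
Your proposal is correct and follows essentially the paper's route: the paper's one-line proof just cites the $\alpha_g$-invariance of $L^2_c(\CM,\tau)$ and passes to the orthogonal complement. You supply the details it leaves implicit, namely that the orbit of $u_h\xi$ coincides with the orbit of $\xi$, and that $u_h^*=u_{h^{-1}}$ preserves $L^2_c(\CM,\tau)$.
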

\begin{proof}
    Proof follows from the fact that $L_{c}^{2}(\CM,\tau)$ is $\alpha_g$-invariant for all $  g\in G $.
\end{proof}
\noindent We refer to \cite[Theorem 15.14]{Eisner2015} or \cite[Theorem 3.5]{EliGlasner}, for the following characterization of the compact part $L^2_c(\CM,\tau)$.
\begin{lem}\label{cpt_characterization}
  Let $(\CM,G,\alpha)$ be as above. Then we have,
  \begin{align*}
      L^2_c( \CM, \tau)=\overline{\text{span} \{\text{ $\cup \CK$: $\CK$ is a finite-dimensional, $\alpha$-invariant subspace of $L^2(\CM,\tau)$}\}}^{\|.\|_{2}}.
  \end{align*}
\end{lem}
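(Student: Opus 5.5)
The plan is to prove the two inclusions separately, working entirely with the unitary representation $g\mapsto u_g$ on $L^2(\CM,\tau)$.

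\emph{The inclusion $\supseteq$.} Let $\CK\subset L^2(\CM,\tau)$ be a finite-dimensional $\alpha$-invariant subspace and $\xi\in\CK$. The orbit $\{u_g\xi: g\in G\}$ lies in the ball of radius $\|\xi\|_2$ of the finite-dimensional space $\CK$, so it is totally bounded. Hence $\xi\in L^2_c(\CM,\tau)$. Since $L^2_c(\CM,\tau)$ is a closed subspace, it contains the closed linear span of all such $\CK$.

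\emph{The inclusion $\subseteq$.} Let $\CH_0$ denote the closed span on the right-hand side. It is invariant under every $u_g$, and therefore so is $\CH_0^\perp$, because each $u_g$ is unitary. It suffices to show $\CH_0^\perp\cap L^2_c(\CM,\tau)=\{0\}$. Suppose instead that $0\neq \xi\in \CH_0^\perp\cap L^2_c$.

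\begin{enumerate}
\item Let $K=\overline{\{u_g\xi: g\in G\}}$, which is compact, and form the positive trace-class operator
\[
T=\int_K \omega_{\eta,\eta}\, d\nu(\eta),
\]
where $\omega_{\eta,\eta}=\langle\cdot,\eta\rangle\eta$ and $\nu$ is a suitable probability measure on $K$.
\item To make $T$ commute with every $u_g$, I would choose $\nu$ to be $G$-invariant. The orbit closure $K$ is a compact $G$-space on which $\overline{\{u_g\}}^{\mathrm{SOT}}$ acts as a compact group, since the restriction of the unitaries to $\overline{\mathrm{span}}\,K$ is equicontinuous with relatively compact orbits. So I take $\nu$ to be the pushforward of Haar measure on that compact group. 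As a simpler alternative, I could average $\omega_{u_g\xi,u_g\xi}$ along the Følner sequence and take a weak-$*$/WOT limit point among Hilbert–Schmidt operators; any limit point commutes with each $u_h$ by the Følner property.
\item Then $T$ is nonzero, since $\langle T\xi,\xi\rangle>0$ by continuity near $\eta=\xi$, or in the Følner version because $|\langle u_g\xi,\xi\rangle|$ stays large on a set of positive density. Moreover $T$ is compact and self-adjoint and commutes with $\pi(G)$.
\item Each nonzero eigenspace of $T$ is finite-dimensional and $G$-invariant, so it lies in $\CH_0$. Since $\xi\perp\CH_0$, we get $T\xi=0$ and thus $\langle T\xi,\xi\rangle=0$, a contradiction.
\end{enumerate}

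The main obstacle is constructing an invariant compact operator $T$ that is provably nonzero. In the Følner-averaging version, a WOT limit of averages of $\omega_{u_g\xi,u_g\xi}$ could a priori vanish, or fail to be compact. I would handle this with the compact-group approach: $\overline{\{u_g|_{\overline{\mathrm{span}}K}\}}^{\mathrm{SOT}}$ is a compact group, by equicontinuity together with Arzelà–Ascoli on relatively compact orbits. Haar averaging over this group gives a trace-class $T$ with trace $\|\xi\|^2$ that is genuinely invariant and nonzero. Equivalently, Peter–Weyl for this compact group directly gives $\overline{\mathrm{span}}K\subseteq\CH_0$, which is the classical route in \cite[Theorem 15.14]{Eisner2015}.
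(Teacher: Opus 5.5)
Your proof is correct. The paper gives no argument of its own here and simply cites \cite[Theorem 15.14]{Eisner2015} and \cite[Theorem 3.5]{EliGlasner}, and your argument is a self-contained version of the Peter--Weyl route behind that citation: the compact SOT-closure of $\{u_g\}$ on $\overline{\mathrm{span}}\,K$, a Haar-averaged invariant trace-class $T$ with $\langle T\xi,\xi\rangle>0$, and finite-dimensional invariant eigenspaces forcing $T\xi=0$.

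As a side remark, the F\o lner variant you were worried about also works and avoids the compact-group step. The averages $\frac{1}{m(F_n)}\int_{F_n}\omega_{u_g\xi,u_g\xi}\,dm(g)$ lie in the closed convex hull of $\{\omega_{\eta,\eta}:\eta\in K\}$, which is compact in trace norm, so they have a trace-norm limit point $T$. This $T$ is $G$-invariant, satisfies $\mathrm{Tr}(T)=\|\xi\|_2^2\neq 0$, and has range in $\overline{\mathrm{span}}\,K\subseteq\CH_0^\perp$. Hence any eigenspace of $T$ for a nonzero eigenvalue would lie in both $\CH_0$ and $\CH_0^\perp$, forcing $T=0$, a contradiction.
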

\begin{thm}\label{Cpt_conv}
    Let $x \in L^2_c( \CM, \tau)$ and $\xi, \eta \in \C^d$,  then the following weighted  average 
    \begin{align}\label{Avg_1}
        \B_n(x):= \frac{1}{m(F_n)} \int_{F_n} \alpha_g(x) \langle \pi(g)(\xi), \eta \rangle dm(g)
    \end{align}
    converges in $a.u.$
\end{thm}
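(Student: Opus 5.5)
The plan is to reduce, via the characterization of the compact part in Lemma \ref{cpt_characterization} and a maximal inequality, to the case where $x$ lies in a finite-dimensional $\alpha$-invariant subspace $\CK\subset L^2(\CM,\tau)$. First observe that the weight $g\mapsto\langle\pi(g)\xi,\eta\rangle$ is bounded by $\|\xi\|\|\eta\|$. Split it into real and imaginary parts and then into positive and negative parts; this is exactly the argument in the proof of Theorem \ref{maximal_inequality}. Together with the maximal inequality of Theorem \ref{Cadilac-Wang} for $p=2$ (where $\A_n(x)\le a$ with $\|a\|_2\lesssim\|x\|_2$ for $x\ge0$), it gives a maximal inequality for $\B_n$ on $L^2(\CM,\tau)$. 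Hence the noncommutative Banach principle (the a.u.\ analogue of Theorem \ref{Banach_Principle}, available for $p=2$ as in \cite{cadilhac_noncommutative_2019}) shows that the set of $x\in L^2_c(\CM,\tau)$ for which $\B_n(x)$ converges a.u.\ is closed. By Lemma \ref{cpt_characterization} and linearity, it therefore suffices to prove a.u.\ convergence for $x\in\CK$, where $\CK$ is a finite-dimensional $\alpha$-invariant subspace.

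For such $\CK$, choose an orthonormal basis $x_1,\dots,x_r$ of $\CK$. Since each $\alpha_g$ is unitary on $L^2$ and leaves $\CK$ invariant, we can write $\alpha_g(x_i)=\sum_j \rho(g)_{ji}x_j$ with $\rho:G\to\CU_r$ a continuous finite-dimensional unitary representation. Then
\[
\B_n(x_i)=\sum_j \Bigl(\frac{1}{m(F_n)}\int_{F_n}\rho(g)_{ji}\langle\pi(g)\xi,\eta\rangle\,dm(g)\Bigr)x_j ,
\]
so $\B_n(x_i)$ is a finite combination of the fixed vectors $x_j$ with scalar coefficients $c^{(n)}_{ji}$. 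Each $c^{(n)}_{ji}$ is a Følner average of a matrix coefficient of the finite-dimensional unitary representation $\rho\otimes\pi$, namely $\langle(\rho\otimes\pi)(g)(e_i\otimes\xi),e_j\otimes\eta\rangle$. By the mean ergodic theorem (Theorem \ref{METReiter} applied to the Reiter sequence $\chi_{F_n}/m(F_n)$, or the classical amenable mean ergodic theorem), these averages converge to $\langle P(e_i\otimes\xi),e_j\otimes\eta\rangle$, where $P$ is the projection onto the $(\rho\otimes\pi)$-fixed vectors. One may need to note the left/right convention, which is harmless since $G$ is unimodular or by passing to $g^{-1}$.

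Thus $\B_n(x_i)=\sum_j c^{(n)}_{ji}x_j$ with scalar coefficients that converge. Each $x_j$ is a fixed $\tau$-measurable operator, so for any $\varepsilon>0$ we can choose a projection $e$ with $\tau(1-e)<\varepsilon$ and $x_je\in\CM$ for all $j$, by intersecting finitely many projections. Then $\|(\B_n(x_i)-\sum_j c_{ji}x_j)e\|\le\sum_j|c^{(n)}_{ji}-c_{ji}|\,\|x_je\|\to0$, which is a.u.\ convergence. Linear combinations follow by intersecting projections.

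The main obstacle I expect is the density/closure step. One needs a genuine a.u.\ Banach principle for the weighted averages on $L^2$. This requires a one-sided maximal estimate of the form $\|(\B_n(x))e\|\lesssim\lambda$, not merely the bilateral one in Theorem \ref{maximal_inequality}. I would first try to derive it from the $L^2$ maximal domination $\A_n(x)\le a$: positivity gives $|\B_n(x_+)|^2$-type control via Kadison–Schwarz, $|\B_n(y)|^2\le C\,\A_n(|y|^2)$-like bounds. If that proves difficult, I would settle for b.a.u.\ convergence on the closure. The second route is to assume $x\in\CM\cap L^2_c$, using the density from Theorem \ref{Density_of_M_c}, and to note that uniform operator-norm approximation is not needed for the a.u.\ limit.
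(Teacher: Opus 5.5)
Your proposal is essentially the paper's proof. Both reduce, via Lemma \ref{cpt_characterization} and a Banach principle, to a finite-dimensional $\alpha$-invariant subspace, expand $\B_n(u_i)$ in an orthonormal basis with scalar coefficients that are F\o lner averages of matrix coefficients of $\alpha\otimes\pi$ (your $\rho\otimes\pi$), let the mean ergodic theorem make those coefficients converge, and then cut down by a projection $e$ with every $u_je\in\CM$. The closure obstacle you flag is real in the paper too, since Theorem \ref{Banach_Principle} only yields closure for b.a.u.\ convergence; your Kadison--Schwarz repair works if you pair $|\B_n(y)|^2\le\|\xi\|^2\|\eta\|^2\,\A_n(|y|^2)$ with the weak type $(1,1)$ part of Theorem \ref{Cadilac-Wang} applied to $|y|^2\in L^1(\CM,\tau)$ (not the $L^2$ domination), which gives the one-sided bound $\sup_n\|\B_n(y)e\|\lesssim\lambda$ with $\tau(1-e)\lesssim\|y\|_2^2/\lambda^2$ needed for an a.u.\ Banach principle.
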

\begin{proof}
  Let $\CK$ be a finite dimensional invariant subspace of $L^2(\CM, \tau)$ and suppose 
 $\{u_1,u_2,\cdots,u_k\}$ is an orthonormal basis for $\CK$. Let  $x\in \CK$,  then we can write   $x=\displaystyle\sum_{i=1}^kc_iu_i$, for constants $ c_i \in \C$ for $i = 1, 2, \cdots, k$. Therefore, we have 
 $$\alpha_g(x)= \displaystyle\sum_{i=1}^kc_i\alpha_g(u_i), ~ \text{ for all } g \in G.
 $$
 Hence, it is enough to prove the convergence of the average (\ref{Avg_1}) for $x=u_i$. Now $\alpha_g(u_i)=\displaystyle \sum_{j=1}^k \langle \alpha_g(u_i),u_j\rangle u_j.$
 We have \begin{align*}
     \B_n(u_i)&= \frac{1}{m(F_n)} \int_{F_n} \alpha_g(u_i) \langle \pi(g)(\xi), \eta \rangle dm(g)\\
     &= \frac{1}{m(F_n)} \int_{F_n} \displaystyle \sum_{j=1}^k \langle \alpha_g(u_i),u_j\rangle u_j \langle \pi(g)(\xi), \eta \rangle dm(g)\\
     &= \displaystyle \sum_{j=1}^k \frac{1}{m(F_n)} \int_{F_n} \langle \alpha_g(u_i), u_j \rangle \langle \pi(g)(\xi), \eta\rangle dm(g)u_j\\
     &=\displaystyle \sum_{j=1}^k \frac{1}{m(F_n)} \int_{F_n} \langle \alpha_g(u_i)\otimes \pi(g)(\xi), u_j\otimes \eta \rangle dm(g)u_j\\
     &=\displaystyle \sum_{j=1}^k \langle \frac{1}{m(F_n)} \int_{F_n} \alpha_g(u_i)\otimes \pi(g)(\xi) dm(g), u_j\otimes \eta \rangle u_j.
 \end{align*}
 Now by the mean ergodic theorem, the averages
 \begin{align*} \frac{1}{m(F_n)} \int_{F_n} \alpha_g(u_i)\otimes \pi(g)(\xi) dm(g)=\frac{1}{m(F_n)} \int_{F_n} \alpha_g\otimes \pi(g)(u_i\otimes \xi) dm(g)
 \end{align*}
 converge in $\|.\|_2$. Hence, the averages
 \begin{align*}
 \langle \frac{1}{m(F_n)} \int_{F_n} \alpha_g(u_i)\otimes \pi(g)(\xi) dm(g), u_j\otimes \eta \rangle
 \end{align*}
 converge for each $j \in \{1,2, \cdots, k\}$. For each $j$, there exists a projection $e_j$ with $\tau(e_j^\perp)< \frac{\epsilon}{k}$ and $u_je_j \in \CM$. Taking $e=\displaystyle\wedge_{j=1}^k e_j$, we get
 \begin{align*}
\tau(e^\perp) < \displaystyle \sum_{j=1}^k \tau(e_j^\perp) < \epsilon
 \end{align*}
 and $u_je \in \CM$ for each $j \in \{1,2,\cdots, k\}$. Hence  $\B_n(u_i)e$ converges in $\|.\|_\infty$. By the Theorem \ref{Banach_Principle} and Lemma \ref{cpt_characterization}, we conclude the convergence of $\B_n(\cdot)$ for all $x \in L^{2}_c(\CM,\tau).$
\end{proof}

Let $G$ be a l.c.s.c amenable group with the F\o lner sequnce $(F_n)$ and $m$ be a left Haar measure on $G.$
Consider the left Reiter sequence $(\mu_n)_n$ where $d\mu_n:= \frac{1}{m(F_n)}\chi_{F_n}dm$
Define $$\nu_n=\widetilde{\mu_n}*\mu_n.$$ where $\widetilde{\mu_n}(E):= \mu_n(E^{-1}), E \subseteq G.$ We need the following lemma to prove the convergence for weak-mixing part.
 \begin{lem}\label{lemma_Reiter}
   Let $G$ be a l.c.s.c amenable unimodular group with two-sided F\o lner sequence $(F_n)$ and $(\mathcal{M},\tau)$ be a finite von Neumann algebra. $\alpha: G \to Aut(\mathcal{M})$ be a trace preserving weakly mixing action. Suppose $x \in \mathcal{M}$ satisfies $\tau(x)=0.$ Then we have 
   $$\frac{1}{m(F_n)^2}
\int_{F_n}\int_{F_n}
\left|
\tau\bigl(x^*\alpha_{h_1^{-1}h_2}(x)\bigr)
\right|
\,dm(h_1)\,dm(h_2)
\longrightarrow0.$$
 \end{lem}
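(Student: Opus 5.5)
The plan is to rewrite the double average as a single integral against the measure $\nu_n=\widetilde{\mu_n}*\mu_n$, and then apply Theorem \ref{thm:weak-mixing-Reiter}. For that we need $(\nu_n)$ to be a left Reiter sequence. Set $\xi:=x\widehat{1}\in L^2(\CM,\tau)$, and let $u_g$ be the unitary representation implementing $\alpha$. Then
\[
\tau\bigl(x^*\alpha_{h_1^{-1}h_2}(x)\bigr)=\langle u_{h_1^{-1}h_2}\xi,\xi\rangle .
\]
By the definition of the convolution of measures, for any bounded continuous $\varphi$ on $G$,
\[
\int_G \varphi\, d\nu_n=\int_G\int_G \varphi(h_1^{-1}h_2)\,d\mu_n(h_1)\,d\mu_n(h_2)=\frac{1}{m(F_n)^2}\int_{F_n}\int_{F_n}\varphi(h_1^{-1}h_2)\,dm(h_1)\,dm(h_2).
\]
Here unimodularity guarantees that $\widetilde{\mu_n}$ is $\frac{\chi_{F_n^{-1}}}{m(F_n)}dm$. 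Taking $\varphi(g)=|\langle u_g\xi,\xi\rangle|$, the quantity in the lemma equals $\int_G|\langle u_g\xi,\xi\rangle|\,d\nu_n(g)$.

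The next step is to place $\xi$ in the weak-mixing part $L^2_{wm}(\CM,\tau)$. I interpret ``weakly mixing action'' in the hypothesis to mean $L^2_c(\CM,\tau)=\mathbb{C}\widehat{1}$, i.e.\ the only compact vectors are the constants. Since $\tau(x)=\langle\xi,\widehat{1}\rangle=0$, we get $\xi\perp L^2_c$, so $\xi\in L^2_{wm}$. (Alternatively, one may read the hypothesis as $x\in\CM_{wm}$ directly. In either reading the orthogonality to the constants is exactly what $\tau(x)=0$ supplies.)

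Next we check that $(\nu_n)$ is a left Reiter sequence, i.e.\ $\|\delta_s*\nu_n-\nu_n\|_1\to0$ for every $s\in G$. Convolution by a probability measure is an $L^1$-contraction and $\delta_s*(\widetilde{\mu_n}*\mu_n)=(\delta_s*\widetilde{\mu_n})*\mu_n$, so
\[
\|\delta_s*\nu_n-\nu_n\|_1\le\|\delta_s*\widetilde{\mu_n}-\widetilde{\mu_n}\|_1 .
\]
By unimodularity the right-hand side equals $m(sF_n^{-1}\triangle F_n^{-1})/m(F_n)=m(F_ns^{-1}\triangle F_n)/m(F_n)$. This tends to $0$ precisely because $(F_n)$ is a \emph{right} F\o lner sequence, which holds since the lemma assumes it is two-sided. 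This is the step where the two-sidedness hypothesis is genuinely used, and where some care is needed. Left invariance of $\mu_n$ alone does not pass through the inversion, so one must make sure the Reiter property is derived from the right F\o lner property of $F_n$ on the correct side of the convolution. If the orientation comes out reversed, I would instead write the integrand as $|\langle u_{h_2}\xi,u_{h_1}\xi\rangle|$ and argue symmetrically.

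With these ingredients in place, Theorem \ref{thm:weak-mixing-Reiter} applied to the left Reiter sequence $(\nu_n)$, the vector $\xi\in\CH_{wm}$ and $\eta=\xi$ gives $\int_G|\langle u_g\xi,\xi\rangle|\,d\nu_n(g)\to0$, which is the claim. Since $\nu_n$ is a convolution of absolutely continuous measures it is a genuine probability measure, so that theorem applies. Its proof needs only the Reiter property through Theorem \ref{METReiter} (applied to $\pi\otimes\overline{\pi}$), together with Cauchy--Schwarz. If the mean ergodic theorem there requires the right rather than the left Reiter property, I would verify that one as well, using the analogous estimate $\|\nu_n*\delta_s-\nu_n\|_1\le\|\mu_n*\delta_s-\mu_n\|_1$, which again follows from the right F\o lner property. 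The main obstacle is keeping the left/right conventions consistent, and two-sidedness of $(F_n)$ is exactly what resolves it.
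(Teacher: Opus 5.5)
Your proposal is correct and follows essentially the same route as the paper. Both arguments rewrite the double average as $\int_G|\tau(x^*\alpha_g(x))|\,d(\widetilde{\mu_n}*\mu_n)(g)$, invoke Theorem \ref{thm:weak-mixing-Reiter}, and verify the Reiter property through $\|\delta_s*\nu_n-\nu_n\|_1\le\|\delta_s*\widetilde{\mu_n}-\widetilde{\mu_n}\|_1=m(F_ns^{-1}\triangle F_n)/m(F_n)$, with the right-sided estimate handled analogously. Your explicit observation that $x\widehat{1}\in L^2_{wm}(\CM,\tau)$ follows from $\tau(x)=0$ and weak mixing fills in a step the paper leaves implicit.
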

 \begin{proof}
  By the definition of $\widetilde{\mu_n}$ and convolution of measures we have 
     \begin{align}
         \frac{1}{m(F_n)^2}
\int_{F_n}\int_{F_n}
\left|
\tau\bigl(x^*\alpha_{h_1^{-1}h_2}(x)\bigr)
\right|
\,dm(h_1)\,dm(h_2)= 
\int_G
\left|
\tau\bigl(x^*\alpha_{g}(x)\bigr)
\right|
\,d(\widetilde{\mu_n}*\mu_n)(g).
     \end{align}
Applying the Theorem \ref{thm:weak-mixing-Reiter} it suffices to show that $\widetilde{\mu_n}*\mu_n$ is a two-sided Reiter sequence. From the unimodularity of $G$ it is immediate that $\widetilde{\mu_n}*\mu_n$ is a right Reiter sequence. To show that it is left Reiter,
for all $s \in G,$
\begin{align*}
    \|\delta_s*(\widetilde{\mu_n}*\mu_n)-\widetilde{\mu_n}*\mu_n\|_1 &= \|(\delta_s*\widetilde{\mu_n})*\mu_n-\widetilde{\mu_n}*\mu_n\|_1\\ &=\|(\delta_s*\widetilde{\mu_n}-\widetilde{\mu_n})*\mu_n\|_1\\
    &\leq \|\delta_s*\widetilde{\mu_n}-\widetilde{\mu_n}\|_1.
\end{align*}
Now for any measurable set $E\subseteq G,$
\begin{align*}
    \delta_s*\widetilde{\mu_n}(E) &= \int_G\int_G \chi_E(gh)d\delta_s(g)d\widetilde{\mu_n}(h)\\
   &= \int_G\chi_E(sh)d\widetilde{\mu_n}(h)\\
    &=\int_G\chi_{s^{-1}E}(h^{-1})d\mu_n(h)\\
    &=\int_G\chi_{E^{-1}s}(h)d\mu_n(h)\\
    &= \frac{1}{m(F_n)}\int_G\chi_{E^{-1}s \cap F_n}(h)dm(h)\\
    &=\frac{m(E^{-1}s \cap F_n)}{m(F_n)}\\
    &=\frac{m(E \cap s{F_n}^{-1})}{m(F_n)}\\
    &=\frac{1}{m(F_n)}\int_E\chi_{sF_n^{-1}}(h)dm(h).
\end{align*}
The second last equality follows from unimodularity. Hence, $\frac{d(\delta_s*\widetilde{\mu_n})}{dm}=\frac{1}{m(F_n)}\chi_{sF_n^{-1}}.$
Consequently, \begin{align*}
    \|\delta_s*\widetilde{\mu_n}-\widetilde{\mu_n}\|_1 &=\frac{1}{m(F_n)}\|\chi_{sF_n^{-1}}-\chi_{F_n^{-1}}\|_1\\
    &=\frac{1}{m(F_n)}m(sF_n^{-1}\Delta F_n^{-1})\\
    &=\frac{1}{m(F_n)}m\bigl((F_ns^{-1}\Delta F_n)^{-1}\bigr)\\
    &=\frac{1}{m(F_n)}m(F_ns^{-1}\Delta F_n).
\end{align*}
In the last equality we used $m(E^{-1})=m(E), E\subseteq G$ due to unimodularity of $G.$ Hence by the F\o lner property of $(F_n)_n$ we conclude that $(\widetilde{\mu_n}*\mu_n)_n$ is a Reiter sequence.
 \end{proof}

\begin{thm}\label{main_theorem}
    Let G be a l.c.s.c. amenable unimodular group with an admissible F\o lner sequence $(F_n)$ and $ \CM$ be a finite von Neumann algebra with a f.n trace  $\tau$. Suppose $ (\CM, G, \alpha )$ is a $\tau$-preserving dynamical system. Let $x\in \CM \cap   L^2_{wm}(\CM,\tau)$ and $ \Gamma =( \pi, u, \phi) \in \S$, then $\A_n( x, \Gamma)$ converges  to $0$ in a.u.  
\end{thm}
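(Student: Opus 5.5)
Fix $x\in\CM\cap L^2_{wm}(\CM,\tau)$ and $\Gamma=(\pi,u,\phi)\in\S$. The plan is to apply the van der Corput inequality (Lemma \ref{Lemma_vand}) to $f(g)=e\,\alpha_g(x)e\,\phi(\pi(g)u)$, where $e$ is a projection still to be chosen. The resulting correlation averages will be controlled by the mean ergodic theorem in its a.u. form (Proposition \ref{Mean_ergodic_thm}), and the double averages by the Reiter-sequence weak mixing result (Lemma \ref{lemma_Reiter}).

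\textbf{Step 1 (reduction of the weight).} By Stone--Weierstrass and Peter--Weyl on the compact group $\CU_d$, $\phi$ is a uniform limit of linear combinations of matrix coefficients of irreducible representations of $\CU_d$. Composing with $\pi$, the function $g\mapsto \phi(\pi(g)u)$ is uniformly approximated by finite sums $\sum c_j\langle \sigma_j(g)\xi_j,\eta_j\rangle$, where each $\sigma_j$ is a finite dimensional unitary representation of $G$. Since $\|\alpha_g(x)\|\le\|x\|$, it suffices to treat a single weight $w(g)=\langle\sigma(g)\xi,\eta\rangle$ with $\sigma$ unitary on $\C^{d'}$; the uniform error costs at most $\|x\|\,\|\phi-\text{approx}\|_\infty$ in operator norm.

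\textbf{Step 2 (van der Corput).} Take $f(g)=\alpha_g(x)w(g)$; we may use $f$ itself since $x\in\CM$ is bounded. Lemma \ref{Lemma_vand} bounds $\|\A_n\|^2$, up to Følner error terms that vanish as $n\to\infty$ for fixed $k$, by
\[
\frac{1}{m(F_k)^2}\int_{F_k}\int_{F_k}\Bigl\|\frac{1}{m(F_n)}\int_{F_n}\alpha_g\bigl(x^*\alpha_{h}(x)\bigr)\overline{w(g)}w(gh)\,dm(g)\Bigr\|\,dm(h_1)dm(h_2),\qquad h=h_1^{-1}h_2 .
\]
Operator norm convergence is impossible in general, so the argument must be run after compressing by a projection. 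To do this, expand $\overline{w(g)}w(gh)=\langle (\sigma\otimes\bar\sigma)(g)(\sigma(h)\xi\otimes\bar\xi),\eta\otimes\bar\eta\rangle$. Decomposing $\sigma\otimes\bar\sigma$ into irreducibles, the inner average becomes a finite sum of averages $\frac{1}{m(F_n)}\int_{F_n}\alpha_g(y_h)\,c(g)\,dm(g)$ with $y_h=x^*\alpha_h(x)$ and finite dimensional coefficients $c$. Treating these averages is the main obstacle, because $h$ ranges over a continuum and the projection must be uniform in $h$. I would first try the following. Apply Proposition \ref{Mean_ergodic_thm} (ergodicity may be assumed, or one works relative to the fixed point algebra) to the ampliated action $\alpha\otimes\mathrm{Ad}(\sigma\otimes\bar\sigma)$ on $\CM\otimes M_{d'^2}$, or more directly use Theorem \ref{Cpt_conv}-type arguments. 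This should give a.u. convergence of $e\,(\cdot)\,e$ to $\tau$-type limits of the form $\langle E(y_h),\cdot\rangle$ on a countable dense set of $h\in F_k$. The maximal inequality (Theorem \ref{maximal_inequality}) together with the norm continuity of $h\mapsto y_h$ in $L^2$ then lets one pass to all $h\in F_k$ with a single projection $e$ (with $\tau(1-e)<\varepsilon$), via a Banach principle argument. The limit is bounded in absolute value by a constant times $|\tau(x^*\alpha_h(x))|$ plus contributions from the compact part of $y_h$ paired with eigenfunctions; after multiplying by $w$, these contributions are still governed by $\langle \alpha_g(x),\text{finite-dimensional invariant vectors}\rangle$, which vanish because $x\perp L^2_c$.

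\textbf{Step 3 (conclusion).} Letting $n\to\infty$ gives
\[
\limsup_n\|e\A_n(x,\Gamma)e\|^2\lesssim \frac{1}{m(F_k)^2}\int_{F_k}\int_{F_k}|\tau(x^*\alpha_{h_1^{-1}h_2}(x))|\,dm\,dm .
\]
By Lemma \ref{lemma_Reiter}, or by its proof via Theorem \ref{thm:weak-mixing-Reiter}, which uses only $x\in L^2_{wm}$, the right-hand side tends to $0$ as $k\to\infty$. Combining this with Step 1 and a diagonal choice of projections over a countable family of approximants gives $\A_n(x,\Gamma)\to0$ bilaterally almost uniformly. Upgrading to a.u. is done by estimating $\|\A_n e\|^2=\|e\A_n^*\A_ne\|$ in the same way, since van der Corput naturally produces $f^*f$ terms.
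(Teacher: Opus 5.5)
\textbf{Gap in Step 2.} You keep the scalar weight $w(g)=\langle\sigma(g)\xi,\eta\rangle$, so the product $\overline{w(g)}\,w(gh)$ still depends on $g$ (it is a coefficient of $\sigma\otimes\bar\sigma$). The inner average in the van der Corput bound is therefore again a Wiener--Wintner average, now of $y_h=x^*\alpha_h(x)$, which is not weakly mixing.

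Its limit is not the scalar $\tau(y_h)$. It is the component of $y_h$ in the part of $L^2_c(\CM,\tau)$ on which $G$ acts through subrepresentations of $\sigma\otimes\bar\sigma$. This produces terms $\langle y_h,v\rangle=\langle\alpha_h(x),xv\rangle$ with $v$ in finite-dimensional invariant subspaces.

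These terms do \emph{not} vanish ``because $x\perp L^2_c$''. In general $xv\notin L^2_c$; at $h=e$ the term is $\tau(v^*x^*x)$, which need not be zero. They only average to zero in $h$, and showing that requires Theorem \ref{thm:weak-mixing-Reiter} with $\eta=xv$.

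Moreover, Proposition \ref{Mean_ergodic_thm} does not apply to your ampliated action. Its fixed points contain $1\otimes(\sigma\otimes\bar\sigma)(G)'$, which is nontrivial once $\dim\sigma\ge 2$. You would therefore need operator-norm control of a non-scalar limit after compressing by $e$, for instance via finite-dimensionality of the fixed-point algebra of the ampliation. None of this is supplied, so the bound in Step 3 does not follow.

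Separately, ``maximal inequality plus $L^2$-continuity of $h\mapsto y_h$'' cannot produce one projection for uncountably many $h$. The maximal inequality gives a projection that depends on the element.

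\textbf{Missing idea.} The paper removes the weight from the correlations by making it matrix-valued: $f(g)=\alpha_g(x)e\otimes\pi(g)u\in\CM\otimes M_d(\C)$. Since $\pi(g)u$ is unitary,
\[
f(g)^*f(gh_1^{-1}h_2)=e\,\alpha_g\bigl(x^*\alpha_{h_1^{-1}h_2}(x)\bigr)e\otimes u^*\pi(h_1^{-1}h_2)u .
\]
So only the unweighted a.u.\ theorem is needed (Proposition \ref{Mean_ergodic_thm}, with scalar limit $\tau(x^*\alpha_h(x))$). Then $\A_n(x,\Gamma)e$ is recovered by applying $\mathrm{id}\otimes\omega_{\xi,\eta}$.

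The compression must be one-sided. With $e\alpha_g(x)e$ the middle $e$ destroys the product structure of $f^*f$, whereas $\alpha_g(x)e$ gives a.u.\ convergence directly.

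Your Step 1 is fine, and in fact more careful than the paper's Stone--Weierstrass remark: coefficients of the defining representation of $\CU_d$ alone are not dense in $C(\CU_d)$. Just run the tensor argument with each $\sigma_j$ in place of $\pi$.

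On uniformity in $h$: the paper likewise passes from a countable dense set to all of $G$ by citing continuity of the action, which does not give operator-norm control. A clean fix is to stop the van der Corput chain before moving the norm inside the $(h_1,h_2)$-integral. Then one needs a.u.\ convergence of $\A_n\otimes\mathrm{id}$ only for the countably many elements
\[
Z_k=\frac{1}{m(F_k)^2}\int_{F_k}\int_{F_k}x^*\alpha_{h_1^{-1}h_2}(x)\otimes\pi(h_1^{-1}h_2)\,dm(h_1)\,dm(h_2).
\]
The limits of these have norm at most $\frac{1}{m(F_k)^2}\int_{F_k}\int_{F_k}|\tau(x^*\alpha_{h_1^{-1}h_2}(x))|\,dm(h_1)\,dm(h_2)$.
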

   \begin{proof}

       Since $\mathcal{U}_d$ is a compact group, the Stone-Weierstrass theorem implies that every continuous function $\phi$ on $\mathcal{U}_d$ can be uniformly approximated by matrix coefficient functions of the form
       \[
u \mapsto \langle u\xi,\eta\rangle,
~~ \xi,\eta\in \mathbb{C}^d .
\]
Therefore, it suffices to prove the convergence of $\A_n(\cdot, \cdot)$ for functions of the form $\phi(u)=\langle u\xi,\eta\rangle .$
By Theorem~\ref{Banach_Principle}, it is enough to establish the convergence of $\A_n(x, \Gamma)$ for $ x\in \CM\cap L^2_{wm}(\CM,\tau).$
Let $x\in \CM\cap L^2_{wm}(\CM,\tau)$ and let $L=\{g_i:\, i\in\mathbb{N}\}$
be a countable dense subset of $G$. Fix $\varepsilon>0$. By Proposition~\ref{Mean_ergodic_thm}, for each $i\in\mathbb{N}$, there exists a projection $e_i\in \CP(\CM)$ such that $ \tau(e_i^\perp)<\frac{\varepsilon}{2^i}$
and
\[
\A_n\bigl(x^*\alpha_{g_i}(x)\bigr)e_i
\longrightarrow
\tau\bigl(x^*\alpha_{g_i}(x)\bigr)e_i
\]
in $\|\cdot\|$. 
Define $ e:=\bigwedge_{i\in\mathbb{N}} e_i .$
Then $ \tau(e^\perp)<\varepsilon,$
and
\[
\A_n\bigl(x^*\alpha_{g_i}(x)\bigr)e
\longrightarrow
\tau\bigl(x^*\alpha_{g_i}(x)\bigr)e
\]
in $\|\cdot\|$  and uniformly in for  all $i\in\mathbb{N}$.
Since the action $\alpha$ is continuous and $\{g_i\}_{i\in\mathbb{N}}$ is dense in $G$, it follows that
\[
\A_n\bigl(x^*\alpha_g(x)\bigr)e
\longrightarrow
\tau\bigl(x^*\alpha_g(x)\bigr)e, \text{ in } \norm{\cdot} \text{ for all } g \in G.
\]
Now consider the function $ f:G\to \CM\otimes M_d(\mathbb{C})$, 
defined by
\[
f(g)=\alpha_g(x)e\otimes \pi(g)u,
\]
where $u\in \mathcal{U}_d$. Clearly, $f$ is bounded.
We now compute the first term on the right-hand side of Ineq. \eqref{Vand_inequality} for this function. In this case, we obtain
      \begin{align*}
f(g)^*f(gh_1^{-1}h_2)=e\alpha_g(x^*\alpha_{h_1^{-1}h_2}(x))e\otimes u^*\pi({h_1^{-1}h_2 })u.
      \end{align*}
Consequently, we note that 
\begin{align}\label{inequality_A}
          \frac{1}{m(F_k)^2} \int_{F_k} \int_{F_k}
&\norm{ \left( \frac{1}{m(F_n)} \int_{F_n}f( g)^* ~ f( gh_{1}^{-1}h_2) dm(g) \right) }dm(h_1) dm(h_2)\notag \\
&=\frac{1}{m(F_k)^2} \int_{F_k} \int_{F_k}\norm{\frac{1}{m(F_n)}\int_{F_n}( e \alpha_g(x^*\alpha_{h_1^{-1}h_2}(x))  e\otimes u^*\pi(h_1^{-1}h_2 )u)}dm(h_1) dm(h_2)\notag \\
&\leq \frac{1}{m(F_k)^2} \int_{F_k} \int_{F_k}\norm{ e \A_n(x^*\alpha_{h_1^{-1}h_2}(x)) e }dm(h_1)\, dm(h_2)\notag \\
&\leq \frac{1}{m(F_k)^2} \int_{F_k} \int_{F_k}|\tau(x^* \alpha_{h_1^{-1}h_2}(x))|dm(h_1)\, dm(h_2)
     \end{align}
 Hence by Ineq.\eqref{Vand_inequality} and using the lemma \ref{lemma_Reiter} we have 
 \begin{align*}
    \A'_n(x):=\frac{1}{m(F_n)}\int_{F_n}\alpha_g(x)e\otimes\pi(g)u dm(g)
\end{align*}
converges to $0$ in $\CM\otimes M_d(\mathbb{C})$. Consider the linear map $id\otimes w_{\xi,\eta}: \CM \otimes M_d(\mathbb{C}) \to \CM$ defined as $id\otimes w_{\xi,\eta}(x\otimes u)=\langle u\xi,\eta\rangle x$. Since $id\otimes w_{\xi,\eta}$ is a bounded linear map, by the Bochner integral property, 
\begin{align*}
  id\otimes w_{\xi,\eta}(\A'_n(x))= \mathbb{A}_n(x,\Gamma)e
\end{align*}
converges to $0$ in $\CM$. This completes the proof.
   \end{proof} 

In the following remark, we discuss our results in the context of classical settings. 
\begin{rem}\label{class-comp}
In \cite{Pavel}, Pavel Zorin-Kranich proved the classical Wiener--Wintner theorem for amenable group actions with respect to \(C\)-tempered F\o lner sequences by extending Bourgain's Return Times Theorem to the setting of amenable groups.
\begin{thm}[Bourgain's Return Times Theorem]
Let $(X,\mathcal{B},\mu,T)$ be an ergodic measure-preserving system, and let
$f\in L^\infty(X)$. Then, for $\mu$-almost every $x\in X$, the sequence
\[
c_n=f(T^n x), \qquad n\geq 1,
\]
is a \emph{good universal weight} for the pointwise ergodic theorem. That is, for every measure-preserving system
$(Y,\mathcal{C},\nu,S)$ and every $g\in L^\infty(Y)$, the averages
\[
\frac{1}{N}\sum_{n=1}^{N} c_n\,g(S^n y)
=
\frac{1}{N}\sum_{n=1}^{N} f(T^n x)\,g(S^n y)
\]
converge as $N\to\infty$ for $\nu$-almost every $y\in Y$.
\end{thm}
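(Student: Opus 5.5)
The plan follows the Bourgain--Furstenberg--Katznelson--Ornstein strategy, organised around the same two ingredients used above for Theorem \ref{main}: a Jacobs--de Leeuw--Glicksberg splitting, here \(L^2(X,\mu)=L^2_c\oplus L^2_{wm}\), and a van der Corput estimate.

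\textbf{Step 1 (reduction to a dense class).} For fixed \(x\), the averages are bounded by \(\frac1N\sum_{n\le N}|f(T^nx)|\,|g(S^ny)|\). Applying the maximal ergodic inequality in both variables, the set of \(f\in L^\infty(X)\) that admit a full-measure set of good \(x\) is closed in \(L^1\) among uniformly bounded functions. This is a Banach principle of the same type as Theorem \ref{Banach_Principle}. Since the statement is linear in \(f\), it therefore suffices to treat two kinds of \(f\): eigenfunctions \(f\) with \(f\circ T=\lambda f\), \(|\lambda|=1\), which span a dense subspace of \(L^2_c\) by Lemma \ref{cpt_characterization} in the commutative, \(\mathbb Z\)-action case; and bounded \(f\in L^2_{wm}\). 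Only countably many eigenfunctions are needed because \(L^2\) is separable, so a countable intersection of full-measure sets stays of full measure.

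\textbf{Step 2 (the compact part).} If \(f\circ T=\lambda f\), then \(c_n=\lambda^n f(x)\). The averages become \(f(x)\frac1N\sum_{n\le N}\lambda^n g(S^ny)\), and these converge for \(\nu\)-a.e.\ \(y\) by the classical Wiener--Wintner theorem applied to \((Y,S)\); in fact a single null set works for all \(\lambda\) at once. Here the set of good \(x\) is all of \(X\). This step is routine.

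\textbf{Step 3 (the weak mixing part).} The goal is to find, for \(f\in L^2_{wm}\cap L^\infty\), a full-measure set \(X_f\) such that for every \(x\in X_f\), every system \((Y,S)\) and every \(g\), the averages tend to \(0\) for \(\nu\)-a.e.\ \(y\).
\begin{enumerate}
\item Use Hilbert-space van der Corput for the vectors \(f(T^nx)S^ng\in L^2(\nu)\). This first gives only mean convergence of the averages, and later needs upgrading to a.e.\ convergence.
\item For the upgrade, the first attempt is a lacunary subsequence \(N_j=\lfloor\rho^j\rfloor\) with quantitative rates, followed by interpolation.
\item The key input would be a local spectral statement for \(\mu\)-a.e.\ \(x\). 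The sequence \(c_n=f(T^nx)\) has correlations \(\gamma_x(h)=\lim_N\frac1N\sum_n c_{n+h}\overline{c_n}\), and these exist for all \(h\) by the pointwise ergodic theorem applied to \(f\circ T^h\cdot\bar f\). They define a positive-definite sequence whose spectral measure \(\sigma_x\) one hopes to show is atomless, since \(f\perp\) the Kronecker factor.
\item Atomlessness should force the weighted averages to vanish. The mechanism is a spectral pairing: \(\|\frac1N\sum c_n S^ng\|_2^2\) is approximately \(\int\!\!\int|\frac1N\sum_n c_ne^{2\pi in(\theta)}|^2\ldots\), so it is controlled by \(\sigma_x*\widetilde{\sigma_g}\), and this has no atom at \(0\) whenever \(\sigma_x\) is continuous. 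This is exactly the analogue of Theorem \ref{thm:weak-mixing-Reiter}.
\end{enumerate}

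\textbf{Main obstacle.} The hard step is passing from mean to a.e.\ convergence in \(y\) uniformly over all systems \((Y,S)\) with a single null set in \(x\). Continuity of \(\sigma_x\) alone gives no rate. I would try first to prove a quantitative oscillation (or variational) inequality in the spirit of Bourgain's entropy and uncertainty-principle argument, roughly of the form
\[
\sum_j\Big\|\sup_{N_j\le N<N_{j+1}}\Big|\frac1N\sum_{n\le N}c_nS^ng-\frac1{N_j}\sum_{n\le N_j}c_nS^ng\Big|\Big\|_2^2\lesssim \|g\|_\infty^2,
\]
with a constant depending only on \(x\) and finite for a.e.\ \(x\). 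If that fails, the fallback is Rudolph's joining approach. In that approach one works with the product system \(X\times Y\) and the relatively independent self-joining over the Kronecker factor, and uses that \(f\otimes g\) is orthogonal to the Kronecker factor of every ergodic component of the joining.
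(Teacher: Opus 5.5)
\textbf{There is a genuine gap in Step 3.} Note first that the paper does not prove this statement. It is quoted in Remark \ref{class-comp} only as background, as the tool behind Zorin-Kranich's amenable Wiener--Wintner theorem, and the authors stress that no noncommutative analogue is known. So I compare your proposal with the standard proofs.

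Your Steps 1 and 2 are the standard, correct reductions. For $g\in L^\infty$ the closure argument needs only Birkhoff for $|f-f_k|$ in the $x$-variable. Step 3, however, is the entire theorem, and you do not prove it. What your sketch can deliver is a mean statement:
\begin{itemize}
\item By Birkhoff, for $\mu$-a.e.\ $x$ one has $\gamma_x(h)=\langle f\circ T^h,f\rangle$ for all $h$. So $\sigma_x=\sigma_f$, which is continuous automatically; there is nothing to ``hope'' for.
\item Hence $\frac1N\sum_{n\le N}c_ne^{2\pi in\theta}\to0$ for every $\theta$.
\item Hence $\|\frac1N\sum_{n\le N}c_n\,g\circ S^n\|_{L^2(\nu)}\to0$ for every $(Y,S,g)$.
\end{itemize}
The return times theorem is exactly the upgrade to $\nu$-a.e.\ convergence with one $x$-null set serving all $(Y,S,g)$. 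For a \emph{fixed} $(Y,S,g)$, Birkhoff for $T\times S$ already gives convergence $\mu\times\nu$-a.e., so the exchange of quantifiers is the whole content.

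Your lacunary route fails as stated. Neither continuity of $\sigma_x$ nor the uniform Wiener--Wintner bound $\sup_\theta|\frac1N\sum_{n\le N}c_ne^{2\pi in\theta}|\to0$ comes with a rate. So $\sum_j\|\frac1{N_j}\sum_{n\le N_j}c_n\,g\circ S^n\|_{L^2(\nu)}^2$ need not converge along $N_j=\lfloor\rho^j\rfloor$. Thinning the sequence to force summability destroys the interpolation step, which needs $N_{j+1}/N_j$ close to $1$.

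The oscillation inequality you write down, with an $x$-dependent constant, is precisely the estimate that needs one of the known arguments. These are Bourgain's entropy/maximal-function argument, the Bourgain--Furstenberg--Katznelson--Ornstein proof, or Rudolph's proof via joinings and fully generic points. You name this inequality and a fallback, but you prove neither.

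It is also worth seeing why the two ingredients you borrow from the paper cannot close the gap. In Theorem \ref{main_theorem} the weight is $\phi(\pi(g)u)$ with $\pi$ finite dimensional. In the van der Corput term its self-correlation $(\pi(g)u)^*\pi(gh)u=u^*\pi(h)u$ does not depend on $g$. What remains is an average of $x^*\alpha_h(x)$ in a single system, handled by Proposition \ref{Mean_ergodic_thm} and Lemma \ref{lemma_Reiter}.

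For an arbitrary second system, the corresponding correlation $\overline{g(S^ny)}\,g(S^{n+h}y)$ depends on both $n$ and $y$. Van der Corput then just returns another return-times average, with $f$ replaced by $\bar f\cdot f\circ T^h$ and $g$ by $\bar g\cdot g\circ S^h$, so the argument is circular. Likewise, Theorem \ref{thm:weak-mixing-Reiter} controls averaged correlations inside one Hilbert space. It gives no pointwise information in an auxiliary system, so it is not ``exactly the analogue'' of what Step 3 requires.
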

To the best of our knowledge, no noncommutative analogue of Bourgain's Return Times Theorem is currently available, even for $\mathbb{Z}$-actions. Consequently, there is no corresponding version of the noncommutative analogue for actions of amenable groups. In his paper \cite{ElAbdalaoui2021}, Abdalaoui proved the Wiener-Wintner theorem, in the classical setting, for amenable groups along a F\o lner sequence that satisfies the Tempelman condition.

\medskip
\noindent
We would like to draw the reader's attention to the fact that our approach is partially inspired by that of M.~B\"ahring \cite{Bahring2019}. In his work, B\"ahring established the uniform Wiener--Wintner theorem for locally compact abelian groups in the classical setting by proving a finitary version of van der Corput's lemma and employing a suitable JdLG decomposition for locally compact abelian groups.

The main obstacle in extending the argument for more general amenable groups arises in the proof of Lemma~\ref{lemma_Reiter}.  In \cite{Bahring2019}, the corresponding result is established only for abelian groups. A careful survey of the classical ergodic theory indicates that, in order to prove Lemma~\ref{lemma_Reiter} for general amenable groups, one is naturally led to assume that the underlying F\o lner sequence satisfies the Tempelman condition.

\medskip
\noindent 
Indeed, we first observe that, by Theorem~\ref{existence_of_F'_n}, given an admissible Følner sequence $(F_n)$ satisfying the Tempelman condition, one can construct another sequence $(F_n')$ such that
\begin{align}
F_n' \subseteq F_n
\qquad\text{and}\qquad
\frac{m(F_n\setminus F_n')}{m(F_n)} \longrightarrow 0
\quad\text{as } n\to\infty,
\end{align}
with the additional properties that, for all sufficiently large $n\in\mathbb{N}$,
$(F_n')$ satisfies the Tempelman condition and
$(F_n'^{-1}F_n')$ is a Følner sequence.

Furthermore, for every $x\in\CM$ and $\Gamma\in\S$, we have
\begin{align*}
\A_n(x,\Gamma)
&=
\frac{m(F_n')}{m(F_n)}
\cdot
\frac{1}{m(F_n')}
\int_{F_n'}
\alpha_g(x)\phi(\pi(g)u)\,dm(g) \\
&\qquad
+
\frac{1}{m(F_n)}
\int_{F_n\setminus F_n'}
\alpha_g(x)\phi(\pi(g)u)\,dm(g).
\end{align*}
Since $ \frac{m(F_n\setminus F_n')}{m(F_n)}\longrightarrow 0
~\text{ and } ~
\|\alpha_g(x)\phi(\pi(g)u)\|
\leq \|x\|\,\|\phi\|,$
the second term converges to zero in norm. Consequently,
$\A_n(x,\Gamma)$ converges a.u.\ (respectively, b.a.u.) if and only if
\[
\frac{1}{m(F_n')}
\int_{F_n'}
\alpha_g(x)\phi(\pi(g)u)\,dm(g)
\]
converges a.u.\ (respectively, b.a.u.). Therefore, without loss of generality, we may assume throughout that $(F_n^{-1}F_n)$ is also a Følner sequence.

Then, following similar arguments available in the literature (see, e.g., \cite{ElAbdalaoui2021}, \cite{BeyerStroh2010}), one obtains an alternative proof of Theorem~\ref{main_theorem} for Følner sequences satisfying the Tempelman condition. Indeed, by Fubini's theorem and the change of variables $h_2 \mapsto h_1^{-1}h_2$, we have
\begin{align*}
\frac{1}{m(F_k)^2}\int_{F_k}\int_{F_k}
&|\tau(x^*\alpha_{h_1^{-1}h_2}(x))|
\,dm(h_1)\,dm(h_2)  \\
&=
\frac{1}{m(F_k)}
\int_{F_k^{-1}F_k}
\frac{m(F_k\cap F_kh^{-1})}{m(F_k)}
|\langle \alpha_h(x),x\rangle|
\,dm(h)  \\
&\le
\frac{C}{m(F_k^{-1}F_k)}
\int_{F_k^{-1}F_k}
|\langle \alpha_h(x),x\rangle|
\,dm(h),
\end{align*}
where the last inequality follows from the Tempelman condition
\[
m(F_k^{-1}F_k)\le C\,m(F_k).
\]
Since $x\in \mathcal{M}\cap L^2_{\mathrm{wm}}(\mathcal{M})$, letting $k\to\infty$ yields
\[
\frac{1}{m(F_k)^2}\int_{F_k}\int_{F_k}
|\tau(x^*\alpha_{h_1^{-1}h_2}(x))|
\,dm(h_1)\,dm(h_2)\longrightarrow 0.
\]
However, this approach is inherently restrictive, since a general locally compact amenable group need not admit a Tempelman F\o lner sequence.

\medskip
In the present work, we establish the Wiener-Wintner theorem for arbitrary unimodular l.c.s.c amenable groups. A major obstacle is that, in our noncommutative setting, no appropriate JdLG decomposition was previously available. To overcome this difficulty, we first develop a JdLG decomposition for amenable group actions on finite von Neumann algebras in the appropriate form. We then prove a noncommutative van der Corput lemma and combine it with an equivalent characterization of weak mixing along Reiter sequences to establish our main theorem.

Moreover,  we would also like to emphasize that Lemma~\ref{lemma_Reiter} enables a substantial strengthening of the main result of M.~B\"ahring~\cite{Bahring2019} on the uniform Wiener--Wintner theorem for unimodular locally compact amenable groups. To the best of our knowledge, this improvement is new even in the classical (commutative) setting.

\medskip
Finally, our results also solve the questions raised by Abdalaoui in  \cite[Question 2.12]{ElAbdalaoui2021} in the context of amenable groups, 
which we will quote verbatim as:
\emph{
Question 2.12  We ask on the possible extension of van der Corput inequality to the locally compact group and its application to obtain produce a direct
proof of the group version of Wiener-Wintner theorem.}

\end{rem}

\noindent\textbf{Acknowledgments.} 
First author P. Bikram gratefully acknowledges financial support provided by the Anusandhan National Research Foundation (ANRF), Government of India, under the : ANRF/ARGM/2025/001021/MTR  grant.

\providecommand{\bysame}{\leavevmode\hbox to3em{\hrulefill}\thinspace}
\providecommand{\MR}{\relax\ifhmode\unskip\space\fi MR }
\providecommand{\MRhref}[2]{%
\href{http://www.ams.org/mathscinet-getitem?mr=#1}{#2} }
\providecommand{\href}[2]{#2}

\end{document}